\documentclass{amsart}

\usepackage{amsmath,amscd,amssymb,amsfonts,amsthm,enumerate}
\usepackage[colorlinks=true,linkcolor=blue,citecolor=blue]{hyperref}
\usepackage{cleveref}
\usepackage{color}
\usepackage{graphicx}

\bibliographystyle{plain} 
\usepackage[numbers,sort&compress]{natbib}


\DeclareMathOperator{\Ass}{Ass}
\DeclareMathOperator{\chara}{char}
\DeclareMathOperator{\depth}{depth}
\DeclareMathOperator{\Min}{Min}

\DeclareMathOperator{\Spec}{Spec}
\DeclareMathOperator{\reg}{reg}
\DeclareMathOperator{\Tor}{Tor}
\DeclareMathOperator{\um}{um}

\newcommand{\Gcc}{\mathcal{G}}
\newcommand{\mm}{\mathfrak{m}}
\newcommand{\nn}{\mathfrak{n}}
\newcommand{\pp}{\mathfrak{p}}

\theoremstyle{plain}
\newtheorem{thm}{Theorem}[section]

\newtheorem{prop}[thm]{Proposition}
\newtheorem{lem}[thm]{Lemma}
\newtheorem{cor}[thm]{Corollary}

\newtheorem{quest}[thm]{Question}

\theoremstyle{definition}

\newtheorem{notn}[thm]{Notation}
\newtheorem{ex}[thm]{Example}
\newtheorem{rem}[thm]{Remark}

\numberwithin{equation}{section}

\newcommand{\kk}{\Bbbk}
\newcommand{\up}[1]{{}^{\tt #1}\!}

\newcommand{\Z}{\mathbb{Z}}
\newcommand{\Rcc}{\mathcal{R}}


\begin{document}


\title[Ordinary and symbolic powers of fiber products]{On the ordinary and symbolic powers\\ of fiber products}

\author[H.V. Do]{Hoang Viet Do}
\address{Institute of Mathematics \\ Vietnam Academy of Science and Technology, 18 Hoang Quoc Viet \\ Hanoi, Vietnam}
\email{vietdohoang10@gmail.com}

\author[H.D. Nguyen]{Hop D. Nguyen}
\address{Institute of Mathematics \\ Vietnam Academy of Science and Technology, 18 Hoang Quoc Viet \\ Hanoi, Vietnam}
\email{ndhop@math.ac.vn}
\email{ngdhop@gmail.com}

\author[S.A. Seyed Fakhari]{Seyed Amin Seyed Fakhari}
\address{Departamento de Matem\'aticas, Universidad de los Andes, Bogot\'a, Colombia}
\email{s.seyedfakhari@uniandes.edu.co}


\begin{abstract}
We completely determine the depth and regularity of symbolic powers of the fiber product of two homogeneous ideals in disjoint sets of variables, given knowledge of the symbolic powers of each factor.  Generalizing previous joint work with Vu, we provide exact, characteristic-independent formulas for the depth and regularity of ordinary powers of such fiber products.
\end{abstract}


\subjclass[2010]{13D02; 13C05; 13D05; 13D45; 13H99}


\keywords{Powers of ideals, symbolic power, fiber product, depth, Castelnuovo--Mumford regularity.}


\thanks{}


\maketitle


\section{Introduction}
Let $A, B$ be \emph{standard graded} algebras over a field $\kk$, i.e. each of them is a finitely generated graded $\kk$-algebra generated by elements of degree 1. Given the presentations of $A$ and $B$ as quotients of polynomial rings $A=R/I, B=S/J$, we obtain the corresponding presentations of the tensor product $A\otimes_\kk B$ and the fiber product $A\times_\kk B$. Let $T=R\otimes_\kk S$, $\mm, \nn$ be the graded maximal ideals of $R,S$, respectively. Then $A\otimes_\kk B \cong T/(I+J)$, where we identify ideals of $R$ and $S$ with their extensions to $T$. Similarly, $A\times_\kk B\cong T/(I+J+\mm \nn)$. Hence the study of tensor products and fiber products of standard graded $\kk$-algebras is equivalent to the study of the \emph{sums} $I+J$ and \emph{fiber products} $I+J+\mm\nn$ of homogeneous ideals in disjoint sets of variables. The research on ordinary and symbolic powers of the sum $I+J$, assuming the knowledge of the corresponding powers of the summands $I$ and $J$, were taken up by many researchers; see, e.g., \cite{BHJT21, HJKN, HTT16, HNTT, NgHo22}. Invariants of fiber products of $\kk$-algebras and their defining ideals have attracted attention of many researchers; see, for example, \cite{AAF12,CR10, DK75, FJP23, G21, Les81,Mo09, NSW17, NgV19b}.

The depth, regularity, and symbolic analytic spread, of symbolic powers of various class of ideals have been considered by many authors; a partial list of fairly recent work is \cite{DM21, DHNT, HJKN, HKTT, HNTT, JK20, MNPTV21, MT19, MTV24, MV24, NgT19, OR19, SF18, SF19, SF20, SF24}. In this paper, we investigate the problem of determining  the depth and regularity of symbolic powers of a fiber product of ideals, given knowledge of the individual factors. Our first main result is the following statement, where $\kk$ is a field of arbitrary characteristic.
\begin{thm}[= \Cref{thm_depth_reg}]
\label{thm_main}
Let $R$ and $S$ be positive dimensional standard graded polynomial rings over $\kk$, with graded maximal ideals $\mm$ and $\nn$. Let $I\subseteq \mm^2, J\subseteq \nn^2$ be homogeneous ideals, such that $\min\{\depth(R/I),$ $\depth(S/J)\}\ge 1$. Let $T=R\otimes_\kk S$ and $F=I+J+\mm \nn$. Then for each integer $s\ge 1$, we have equalities
\begin{align*}
\depth(T/F^{(s)}) &=1,\\
\reg(F^{(s)})   &=\max\limits_{i\in [1,s]} \left\{2s,\reg(I^{(i)})+s-i, \reg(J^{(i)})+s-i\right \}.
\end{align*}
\end{thm}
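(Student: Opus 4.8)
The plan is to realise $T/F^{(s)}$ as the pullback in a short exact sequence
\[
0\longrightarrow T/F^{(s)}\longrightarrow T/(I+\nn T)^{(s)}\oplus T/(J+\mm T)^{(s)}\longrightarrow T/W\longrightarrow 0,\qquad W:=(I+\nn T)^{(s)}+(J+\mm T)^{(s)},
\]
and then to read off depth and regularity from explicit descriptions of the three outer modules. The structural step is the main technical input. Since $I\subseteq\mm^2$ and $J\subseteq\nn^2$ lie in disjoint sets of variables one has $F=(I+\nn T)\cap(J+\mm T)$, and the depth hypotheses give $\mm\notin\Min_R(I)$, $\nn\notin\Min_S(J)$, so the minimal primes $\{\pp+\nn T:\pp\in\Min_R(I)\}$ of $I+\nn T$ and $\{\mm T+\qq:\qq\in\Min_S(J)\}$ of $J+\mm T$ are disjoint, neither family contains $\mathfrak M:=\mm T+\nn T$, and no prime in one contains the other ideal. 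A standard lemma on symbolic powers of an intersection with disjoint minimal primes then gives $F^{(s)}=(I+\nn T)^{(s)}\cap(J+\mm T)^{(s)}$, with $W$ their sum. Using the known description of symbolic powers of sums of ideals in disjoint variables together with $(\nn T)^{(b)}=(\nn T)^b$ (as $\nn T$ is generated by a regular sequence) we get $(I+\nn T)^{(s)}=\sum_{a=0}^{s}I^{(a)}\nn^{s-a}T$; reading this off degree by degree in the grading of $S$ collapses it to the key identity
\[
T/(I+\nn T)^{(s)}\;\cong\;\bigoplus_{b=0}^{s-1}\bigl(R/I^{(s-b)}\bigr)\otimes_\kk S_b,
\]
a $T$-module killed by $\nn^s$, and symmetrically $T/(J+\mm T)^{(s)}\cong\bigoplus_{a=0}^{s-1}(S/J^{(s-a)})\otimes_\kk R_a$ is killed by $\mm^s$. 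Finally $\sqrt W=\mathfrak M$, so $T/W$ is artinian; since $W\supseteq\mm^sT+\nn^sT$ it is a quotient of $(R/\mm^s)\otimes_\kk(S/\nn^s)$, hence vanishes above degree $2s-2$, while in bidegree $(s-1,s-1)$ one computes $(T/W)_{2s-2}\cong(R/I^{(1)})_{s-1}\otimes_\kk(S/J^{(1)})_{s-1}\neq 0$ (using $\sqrt I\neq\mm$ and $\sqrt J\neq\nn$), so $\reg(T/W)=2s-2$.

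For the depth, note each $R/I^{(i)}$ has depth $\geq1$ since its associated primes are among those of $R/I$, which avoid $\mm$; hence $T/(I+\nn T)^{(s)}$ — being killed by $\nn^s$ and assembled from copies of the $R/I^{(i)}$ — has no associated prime equal to $\mathfrak M$, so $\depth(T/(I+\nn T)^{(s)})\geq1$, and likewise for the other factor, whereas $T/W$ is nonzero artinian so $\depth(T/W)=0$. The depth lemma applied to the short exact sequence gives $\depth(T/F^{(s)})\geq\min\{1,1,0+1\}=1$, and run the other way it gives $0=\depth(T/W)\geq\min\{\depth(T/F^{(s)})-1,1\}$, forcing $\depth(T/F^{(s)})\leq1$. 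Hence $\depth(T/F^{(s)})=1$.

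For the regularity, the short exact sequence yields $\reg(T/F^{(s)})\leq\max\{\reg(T/(I+\nn T)^{(s)}),\reg(T/(J+\mm T)^{(s)}),\reg(T/W)+1\}$. The point in evaluating the first two terms is that $\nn^s$ kills $T/(I+\nn T)^{(s)}$, so local cohomology supported at $\mathfrak M$ coincides with that supported at $\mm T$, and the latter respects the $S$-graded splitting above (the action of $\mm T$ living on the $R/I^{(s-b)}$ factors). Therefore $H^k_{\mathfrak M}\bigl(T/(I+\nn T)^{(s)}\bigr)\cong\bigoplus_{b=0}^{s-1}H^k_{\mm}(R/I^{(s-b)})(-b)^{\oplus\dim_\kk S_b}$, whence $\reg\bigl(T/(I+\nn T)^{(s)}\bigr)=\max_{1\leq i\leq s}\{\reg(R/I^{(i)})+s-i\}$, and similarly for $J$; together with $\reg(T/W)+1=2s-1$ this is exactly the asserted upper bound for $\reg(T/F^{(s)})$, hence for $\reg(F^{(s)})=\reg(T/F^{(s)})+1$ after the shifts $\reg(I^{(i)})=\reg(R/I^{(i)})+1$ and $2s=(2s-1)+1$.

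The reverse inequality is the point where care is needed, since one wants the exact value rather than a one-sided bound; I would obtain it from the long exact sequence of local cohomology attached to the short exact sequence. Because all three outer modules have positive depth save for $H^0_{\mathfrak M}(T/W)=T/W$, the sequence degenerates to isomorphisms $H^k_{\mathfrak M}(T/F^{(s)})\cong H^k_{\mathfrak M}(T/(I+\nn T)^{(s)})\oplus H^k_{\mathfrak M}(T/(J+\mm T)^{(s)})$ for $k\geq2$ and to $0\to T/W\to H^1_{\mathfrak M}(T/F^{(s)})\to H^1_{\mathfrak M}(T/(I+\nn T)^{(s)})\oplus H^1_{\mathfrak M}(T/(J+\mm T)^{(s)})\to 0$. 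In each degree $k\geq1$ the module $H^k_{\mathfrak M}(T/F^{(s)})$ surjects onto the $I$-summand, so its top nonvanishing degree is at least that of $H^k_{\mathfrak M}(T/(I+\nn T)^{(s)})$; maximizing over $k$ gives $\reg(T/F^{(s)})\geq\reg(T/(I+\nn T)^{(s)})$, and the same for the $J$-side, while the embedded copy of $T/W$ (nonzero in degree $2s-2$) forces $\reg(T/F^{(s)})\geq(2s-2)+1=2s-1$. Combined with the upper bound this pins down $\reg(T/F^{(s)})$, hence $\reg(F^{(s)})$, to the asserted formula.
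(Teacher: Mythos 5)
Your proposal is correct, and it shares the paper's overall skeleton --- the intersection decomposition $F^{(s)}=(I+\nn)^{(s)}\cap(J+\mm)^{(s)}$, the resulting short exact sequence with third term $T/W$, the depth lemma, and the evaluation of $\reg\bigl(T/(I+\nn)^{(s)})\bigr)$ via the graded splitting over $S$ (this is \Cref{lem_depthregIplusn_symbolic}) --- but it diverges genuinely at the one place where the paper works hardest, namely the lower bound $\reg(T/F^{(s)})\ge 2s-1$. The paper obtains this by showing that $F^{(s)}$ has a minimal generator of degree $2s$ (\Cref{lem_mingen_deg2s}), which rests on the decomposition of $F^{(s)}$ as the double sum $\sum_{i,t}(I^{(i)}\cap\mm^{s-t})(J^{(t)}\cap\nn^{s-i})$ from \Cref{lem_decomposition_as_sum}. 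You instead run the long exact sequence of local cohomology: since the two middle modules have depth at least $1$ and $T/W$ is artinian, $T/W$ embeds into $H^1_{\mm T+\nn T}(T/F^{(s)})$, and the nonvanishing of $T/W$ in bidegree $(s-1,s-1)$ (which only needs $\dim(R/I),\dim(S/J)\ge 1$) yields the bound; the surjections $H^k_{\mm T+\nn T}(T/F^{(s)})\twoheadrightarrow H^k_{\mm T+\nn T}\bigl(T/(I+\nn)^{(s)}\bigr)$ for $k\ge 1$ supply the remaining lower bounds. This bypasses the sum decomposition of Section 3 entirely and is a clean, self-contained alternative; the paper's route has the side benefit of producing the sum decomposition, which it reuses elsewhere.

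Two imprecisions you should repair. First, the symbolic powers here are defined via \emph{associated} primes, so the identity $F^{(s)}=(I+\nn)^{(s)}\cap(J+\mm)^{(s)}$ must be justified through $\Ass$, not $\Min$: the depth hypothesis gives exactly $\mm\notin\Ass_R(I)$ and $\nn\notin\Ass_S(J)$, one checks that $\Ass_T(F)=\{\pp_1+\nn:\pp_1\in\Ass_R(I)\}\cup\{\pp_2+\mm:\pp_2\in\Ass_S(J)\}$ over \emph{all} associated (not just minimal) primes, and then localizes $F^s$ at each of these (cf.\ \Cref{lem_AssMin_F} and \Cref{lem_decomposition}); as written, a minimal-prime argument computes $\up{m}F^{(s)}$, which can differ from $F^{(s)}$. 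Second, $\depth(R/I^{(i)})\ge 1$ holds not because $\Ass(R/I^{(i)})\subseteq\Ass(R/I)$ (false in general), but because every element of $\Ass(R/I^{(i)})$ is \emph{contained in} some element of $\Ass(R/I)$, none of which is $\mm$. Both fixes are routine and do not affect the structure of your argument.
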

The proof relies upon the following decomposition formulas for symbolic powers of $F$, which might be of independent interest: With the notations and hypotheses of \Cref{thm_main}, for all $s\ge 1$, we have (see \Cref{lem_decomposition} and \Cref{lem_decomposition_as_sum}):
\begin{align*}
F^{(s)}&=(I+\nn)^{(s)} \cap (J+\mm)^{(s)}\\
       &=\sum_{i=0}^s\sum_{t=0}^s (I^{(i)}\cap \mm^{s-t})(J^{(t)}\cap \nn^{s-i}). 
\end{align*}
Recall that an ideal is \emph{unmixed} if it has no embedded associated primes.
\begin{cor}
\label{cor_main_regunmixedcase}
Keep using the hypotheses of \Cref{thm_main}. Assume moreover that $I$ is a non-zero unmixed monomial ideal of $R$. Then for all $s\ge 1$, there is an equality
$$
\reg (F^{(s)})=\max \limits_{i\in [1,s]} \left\{\reg(I^{(i)})+s-i, \reg(J^{(i)})+s-i \right\}.
$$
\end{cor}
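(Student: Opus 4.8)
The plan is to deduce this from \Cref{thm_main}. That theorem gives
$\reg(F^{(s)})=\max_{i\in[1,s]}\{2s,\reg(I^{(i)})+s-i,\reg(J^{(i)})+s-i\}$, so the asserted identity is exactly the statement that the term $2s$ is superfluous in this maximum. Since $2s$ is in any case at most the full maximum, it suffices to bound the $I$-terms from below, and taking $i=s$ it is enough to prove
$$\reg(I^{(s)})\ge 2s$$
(in fact one expects, more symmetrically, $\reg(I^{(i)})\ge 2i$ for every $i\ge 1$).

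As $I$ is a monomial ideal, so is $I^{(s)}$, and for any nonzero monomial ideal $L$ one has $\reg(L)\ge\omega(L)$, the top degree of a minimal monomial generator of $L$ (this is immediate from the $0$th row of a minimal free resolution). Hence it suffices to prove $\omega(I^{(s)})\ge 2s$. Here I would first record the decomposition $I^{(s)}=\bigcap_{\pp\in\Min(I)}\qq_\pp^s$, where $\qq_\pp$ is the $\pp$-primary monomial component of $I$: the primary components of a monomial ideal are monomial ideals whose minimal generators involve only the variables of the associated prime, so $\qq_\pp^s$ is again $\pp$-primary and $\qq_\pp^{(s)}=\qq_\pp^s$; unmixedness of $I$ then yields the displayed intersection and shows that $I^{(s)}$ is itself unmixed. (Note $\depth(R/I)\ge 1$ together with unmixedness forces $\operatorname{ht} I<\dim R$, so each $\pp$ is a proper subset of the variables.)

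Now I would split into two cases. If some $\qq_\pp\subseteq\mm^2$ — equivalently, $\qq_\pp$ has no variable among its minimal generators — then $I^{(s)}\subseteq\qq_\pp^s\subseteq\mm^{2s}$, so every minimal generator of $I^{(s)}$ has degree $\ge 2s$ and we are done (indeed even $\alpha(I^{(s)})\ge 2s$). Otherwise every $\qq_\pp$ has a variable generator; write $\qq_\pp=(x_i:i\in V_\pp)+\qq_\pp^\circ$ with $V_\pp\neq\emptyset$ and $\qq_\pp^\circ$ generated by the remaining (non-linear) monomials, which involve only variables of $\pp$ outside $V_\pp$. Since $I\subseteq\mm^2$, no variable lies in every $V_\pp$, so the set system $\{V_\pp\}_{\pp}$ admits a minimal transversal (vertex cover) $T$ with $|T|\ge 2$. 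When $I$ is radical (all $\qq_\pp^\circ=0$), one checks directly that $\prod_{i\in T}x_i^s$ is a minimal generator of $\bigcap_\pp\pp^s$ of degree $s|T|\ge 2s$, using that a minimal transversal meets some $V_\pp$ in a single vertex. In general one must replace the uniform exponent $s$ by exponents adapted to the $\qq_\pp^\circ$: writing a candidate monomial as $\prod_i x_i^{w_i}$, the membership conditions $\prod_i x_i^{w_i}\in\qq_\pp^s$ (one per $\pp$) become explicit covering inequalities in the $w_i$, and one solves them to obtain a componentwise-minimal integer solution with $\sum_i w_i\ge 2s$, i.e.\ a minimal monomial generator of $I^{(s)}$ of degree $\ge 2s$.

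The main obstacle is precisely this last step. The naive candidate $\prod_{i\in T}x_i^s$ need not be a minimal generator of $I^{(s)}$ once the components are non-radical: through the higher generators $\qq_\pp^\circ$ a variable shared by several primes can help satisfy the membership condition for all of them simultaneously, which lets the exponents forced on the transversal variables drop below $s$. Thus one genuinely has to exploit $I\subseteq\mm^2$ at the level of these covering inequalities — showing that the total weight along the ``staircase'' of componentwise-minimal solutions still reaches $2s$ and exhibiting the correct weighted transversal — and this is where the real work lies.
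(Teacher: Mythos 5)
Your reduction is the right one, and it matches the paper's: by \Cref{thm_main} it suffices to show $\reg(I^{(s)})\ge d(I^{(s)})\ge 2s$, and your first case (some primary component $\qq_\pp$ of $I$ lies in $\mm^2$, whence $I^{(s)}=\bigcap_\pp \qq_\pp^s\subseteq\mm^{2s}$) is correct. But the remaining case, where every component has a variable among its minimal generators, is not proved. Your own text concedes that the candidate $\prod_{i\in T}x_i^s$ built from a minimal transversal need not be a minimal generator once the components are non-radical, and the promised ``componentwise-minimal integer solution with $\sum_i w_i\ge 2s$'' to the covering inequalities is never exhibited. Since that is precisely the content of the lower bound in the hard case, this is a genuine gap rather than a routine verification; the transversal combinatorics is also arguably the wrong object to organize the argument around.

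The paper closes the gap by a different and shorter route (\Cref{lem_maxdegree_symbolicpowers}). Pick a minimal generator $f=x_1\cdots x_t$ of $\sqrt{I}$ of maximal degree. If $t\ge 2$: some power $f^{qs}$ lies in $I^s\subseteq I^{(s)}$, so a minimal generator $g=x_1^{\alpha_1}\cdots x_t^{\alpha_t}$ of $I^{(s)}$ divides $f^{qs}$; writing $I^{(s)}=Q_1^s\cap\cdots\cap Q_d^s$ via unmixedness and \Cref{lem_symbolic_powers}, if some $\alpha_1\le s-1$ then in each $Q_j^s$ the monomial $g$ is divisible by a product of $s$ generators of $Q_j$, at least one of which avoids $x_1$ and hence divides $x_2^{\alpha_2}\cdots x_t^{\alpha_t}$; this puts $x_2\cdots x_t$ in $\sqrt{I}$, contradicting minimality of $f$. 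Hence every $\alpha_i\ge s$ and $\deg g\ge ts\ge 2s$. If instead $d(\sqrt{I})=1$, then $\sqrt{I}$ is generated by variables, hence prime, so unmixedness makes $I$ primary and $I^{(s)}=I^s\subseteq\mm^{2s}$. This single-generator factorization argument is exactly the weighted lower bound your approach was groping for; replacing your second case with it completes the proof.
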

In both of the above results, we are concerned with the case $\min\{\depth(R/I),$ $\depth(S/J)\}\ge 1$. How about the case $\min\{\depth(R/I), \depth(S/J)\}=0$? It is not hard to show that in this case, $\depth(T/F)=0$, and therefore the ordinary and symbolic powers of $F$ coincide. Thus in order to study the depth and regularity of symbolic powers of fiber product in this case, we are led to the consideration of those invariants for the corresponding ordinary powers. This problem was considered by the second named author and Vu in \cite{NgV19b}. But the main results in \emph{ibid.} are characteristic-dependent, and they work mainly in characteristic zero. In this paper, we introduce an entirely different approach to solve the problem completely in all characteristics. Thus for the depth of ordinary powers of fiber products, we have
\begin{prop}[= \Cref{prop_depthordinarypower_0}]
\label{prop_depthordinarypower_main}
Let $\kk$ be a field of arbitrary characteristic. Let $I\subseteq \mm^2, J\subseteq \nn^2$ be homogeneous ideals of $R,S$, resp., at least one of which is non-zero. Then for all $s\ge 2$, there is an equality $\depth(T/F^s)=0$.
\end{prop}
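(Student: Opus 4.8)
The plan is to exhibit an explicit socle element of $T/F^{s}$, that is, a homogeneous $x\in(F^{s}:\mathfrak{M})\setminus F^{s}$ with $\mathfrak{M}=\mm+\nn$ the graded maximal ideal of $T$; since $T/F^{s}$ is a finitely generated graded module over the standard graded ring $T$, the existence of such an $x$ forces $\mathfrak{M}\in\Ass(T/F^{s})$ and hence $\depth(T/F^{s})=0$. By the symmetry between the two factors we may assume $I\neq0$. Let $d\ge2$ be the initial degree of $I$, fix $0\neq f\in I_{d}$, and let $y$ be a variable of $R$ and $z$ a variable of $S$ (these exist since $R$ and $S$ are positive dimensional). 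The element I propose to use is
\[
x=f\,y^{\,s-2}z^{\,s-1}\in T,
\]
which makes sense precisely because $s\ge2$.

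Checking $\mathfrak{M}x\subseteq F^{s}$ is the soft part. On one hand $\mm x\subseteq I\cdot\mm^{s-1}\nn^{s-1}=I(\mm\nn)^{s-1}$, and since $I\subseteq F$ and $\mm\nn\subseteq F$ this lies in $F\cdot F^{s-1}=F^{s}$. On the other hand, using $f\in I\subseteq\mm^{2}$, we get $\nn x\subseteq\mm^{2}\cdot\mm^{s-2}\cdot\nn^{s}=\mm^{s}\nn^{s}=(\mm\nn)^{s}\subseteq F^{s}$. Both inclusions hold for an arbitrary homogeneous ideal $I$; nothing here uses monomiality.

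The heart of the argument — and the step I expect to be the main obstacle — is to show $x\notin F^{s}$. The key observation is that $I,J,\mm,\nn$ are all homogeneous for the $\Z^{2}$-grading of $T=R\otimes_{\kk}S$ by $(\deg_{R},\deg_{S})$, since $I,\mm$ are extended from $R$ and $J,\nn$ from $S$; hence $F$ and $F^{s}$ are bigraded, while $x$ is bihomogeneous of bidegree $(d+s-2,\ s-1)$. It therefore suffices to prove $(F^{s})_{(d+s-2,\ s-1)}=0$. Writing $F^{s}=\sum_{a+b+c=s}I^{a}J^{b}\mm^{c}\nn^{c}=\sum_{a+b+c=s}(I^{a}\mm^{c})(J^{b}\nn^{c})$, the bidegree-$(p,q)$ component of the $(a,b,c)$-summand is $(I^{a}\mm^{c})_{p}\otimes_{\kk}(J^{b}\nn^{c})_{q}$, and I will argue that for $(p,q)=(d+s-2,\ s-1)$ one of the two tensor factors is always zero. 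If $b\ge a$, then $J^{b}\nn^{c}$ has initial degree at least $2b+c\ge a+b+c=s>q$ (using $J\subseteq\nn^{2}$ when $b\ge1$, the terms with $J=0$ and $b\ge1$ being zero anyway), so the $S$-factor vanishes. If $b\le a-1$ — so $1\le a$ — then $I^{a}\mm^{c}$ has initial degree at least $ad+c$, and substituting $c=s-a-b$ rewrites $ad+c=(d+s-2)+\bigl((a-1)(d-1)-b+1\bigr)$, where the parenthesised quantity is at least $(a-1)(d-2)+1\ge1$ since $d\ge2$; thus $ad+c>p$ and the $R$-factor vanishes. Either way the summand is $0$ in that bidegree, so $(F^{s})_{(d+s-2,\ s-1)}=0$. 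Since $R$ and $S$ are domains, $f\,y^{s-2}z^{s-1}\neq0$, and it lies in that bidegree, whence $x\notin F^{s}$.

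Putting the pieces together, $x\in(F^{s}:\mathfrak{M})\setminus F^{s}$, so $\mathfrak{M}\in\Ass(T/F^{s})$ and $\depth(T/F^{s})=0$. I note that the construction genuinely requires $s\ge2$ (for the factor $y^{s-2}$), consistent with the fact that $\depth(T/F)$ can be positive. The only point that needs real care is the bookkeeping in the dichotomy above — in particular, that taking $f$ of \emph{minimal} degree $d$, rather than an arbitrary nonzero element of $I$, is exactly what makes the inequality $ad+c>d+s-2$ strict in the second case, and checking that the degenerate terms (those with $a=0$ or $c=0$, or with $J=0$) are correctly absorbed into the two alternatives.
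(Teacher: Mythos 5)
Your proof is correct, and it takes a genuinely different route from the paper's. Both arguments proceed by exhibiting a socle element of $T/F^{s}$, and the ``soft'' inclusion $\mathfrak{M}x\subseteq F^{s}$ is handled the same way; the difference is entirely in how non-membership in $F^{s}$ is established. The paper takes its socle element inside $I^{s-1}\nn$ and proves $I^{s-1}\nn\not\subseteq F^{s}$ via \Cref{lem_noncontainmentsandcontainments}: an induction giving the containment $(I^{i}\cap\mm^{s})\nn^{s-i}\cap F^{s}\subseteq(\mm\nn)^{s-i}F^{i}$, the resulting colon identity $(F^{s}:_{T}\nn^{s-i})\cap(I^{i}\cap\mm^{s})R=\mm^{s-i}I^{i}$, and then Nakayama. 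You instead take $x=fy^{s-2}z^{s-1}\in I\mm^{s-2}\nn^{s-1}$ with $f$ of minimal degree $d$ and kill the entire bidegree $(d+s-2,\,s-1)$ component of $F^{s}=\sum_{a+b+c=s}(I^{a}\mm^{c})(J^{b}\nn^{c})$ by an initial-degree dichotomy on $b\ge a$ versus $b\le a-1$; I checked the bookkeeping, including the identity $ad+c=(d+s-2)+((a-1)(d-1)-b+1)$ and the degenerate cases $a=0$, $b=0$, $J=(0)$, and it is sound. Your argument is more elementary and self-contained, but it buys only the depth statement: the paper's heavier \Cref{lem_noncontainmentsandcontainments} is reused essentially later (in \Cref{lem_regUsmodeFs_ge2sminus1} and \Cref{lem_lowerbound_UsmodeFs}) to get the lower bound on $\reg F^{s}$, where the full colon equality for every $i\in[1,s-1]$ --- not just a single non-membership --- is what is needed.
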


For regularity of ordinary powers of fiber products, we have

\begin{thm}[= \Cref{thm_reg_ordinarypow}]
\label{thm_regordinarypow_main}
Let $\kk$ be a field of arbitrary characteristic. Let $I\subseteq \mm^2$ and $J\subseteq \nn^2$ be homogeneous ideals of $R$ and $S$, resp. Then for  all $s\ge 1$, there is an equality
\begin{align*}
\reg F^s &= \max \limits_{i\in [1,s]} \left\{2s,\reg(\mm^{s-i}I^i)+s-i, \reg(\nn^{s-i}J^i)+s-i\right\}.
\end{align*}
If moreover either $I$ or $J$ is non-zero, then for all $s\ge 1$, there is an equality
\[
\reg F^s = \max \limits_{i\in [1,s]} \left\{\reg(\mm^{s-i}I^i)+s-i, \reg(\nn^{s-i}J^i)+s-i\right\}.
\]
\end{thm}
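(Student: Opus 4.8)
The plan is to reduce the two claimed equalities to a single one and then establish it by filtering $F^s$ so that the successive quotients are controlled by ``box'' ideals $\mathfrak a\otimes_\kk\mathfrak b$ with $\mathfrak a\subseteq R$ and $\mathfrak b\subseteq S$.

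\emph{Decomposition and reduction.}
Expanding $F^s=(I+J+\mm\nn)^s$ as a product of ideals and using that $(\mathfrak aT)(\mathfrak bT)=\mathfrak a\otimes_\kk\mathfrak b$ inside $T=R\otimes_\kk S$, one obtains
\[
F^s=\sum_{a+i+j=s}(\mm^aI^i)(\nn^aJ^j)=\sum_{a=0}^{s}(\mm\nn)^a(I+J)^{s-a},
\]
each summand being the box ideal $(\mm^aI^i)\otimes_\kk(\nn^aJ^j)$. The Künneth formula $\Tor^T_\bullet(\mathfrak a\otimes_\kk\mathfrak b,\kk)\cong\Tor^R_\bullet(\mathfrak a,\kk)\otimes_\kk\Tor^S_\bullet(\mathfrak b,\kk)$ gives $\reg\!\big((\mm^aI^i)(\nn^aJ^j)\big)=\reg(\mm^aI^i)+\reg(\nn^aJ^j)$, and for an ideal $\mathfrak a\subseteq R$ one has $\reg(\mathfrak aT+\nn^cT)=\reg(\mathfrak a)+(c-1)$ from $T/(\mathfrak aT+\nn^cT)=(R/\mathfrak a)\otimes_\kk(S/\nn^c)$. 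In particular $\reg\!\big((\mm^{s-i}I^i)(\nn^{s-i})\big)=\reg(\mm^{s-i}I^i)+(s-i)$, and for $i=0$ this reads $\reg\!\big((\mm\nn)^s\big)=2s$. Hence, with the conventions $I^0=R$ and $\mm^sI^0=\mm^s$, the right-hand side of the first formula is exactly $\max_{0\le i\le s}\{\reg(\mm^{s-i}I^i)+s-i,\ \reg(\nn^{s-i}J^i)+s-i\}$, i.e.\ the largest regularity among the ``pure'' summands $(\mm^{s-i}I^i)(\nn^{s-i})$ and $(\mm^{s-i})(\nn^{s-i}J^i)$ of the decomposition. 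Finally, if $I\ne0$ then $I^s$ is a nonzero ideal generated in degrees $\ge 2s$ (since $I\subseteq\mm^2$), so $\reg(I^s)\ge 2s$ and the ``$2s$'' is dominated by the $i=s$ term $\reg(\mm^0I^s)$; thus the first formula implies the second, and symmetrically if $J\ne0$.

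\emph{Upper bound.}
To prove $\reg F^s\le\max\{2s,\reg(\mm^{s-i}I^i)+s-i,\reg(\nn^{s-i}J^i)+s-i\}$, filter $F^s$ by $G_k:=(\mm\nn)^kF^{s-k}=\sum_{a\ge k}(\mm\nn)^a(I+J)^{s-a}$, so that $F^s=G_0\supseteq G_1\supseteq\cdots\supseteq G_s=(\mm\nn)^s$; iterating $0\to G_{k+1}\to G_k\to G_k/G_{k+1}\to 0$ reduces the bound to showing $\reg(G_k/G_{k+1})\le\text{RHS}$ for $0\le k<s$, together with $\reg(G_s)=\reg((\mm\nn)^s)=2s\le\text{RHS}$. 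Because $G_k=W_k+G_{k+1}$ with $(\mm\nn)W_k\subseteq G_{k+1}$ for $W_k:=\sum_{i+j=s-k}(\mm^kI^i)\otimes_\kk(\nn^kJ^j)$, the module $G_k/G_{k+1}$ is dominated by $W_k/(\mm\nn)W_k$ and its box subquotients; $W_k$ itself is a ``staircase'': its box summands are $(\mm^kI^i)\otimes_\kk(\nn^kJ^{s-k-i})$ with the first factors nested $\mm^kI^0\supseteq\mm^kI^1\supseteq\cdots$ and the second anti-nested, so it can be dismantled by the Mayer--Vietoris sequences $0\to P\cap P'\to P\oplus P'\to P+P'\to 0$ together with the identity $(\mathfrak a\otimes_\kk\mathfrak b)\cap(\mathfrak a'\otimes_\kk\mathfrak b')=(\mathfrak a\cap\mathfrak a')\otimes_\kk(\mathfrak b\cap\mathfrak b')$ for bihomogeneous ideals, keeping every module a box or box-quotient. \textbf{The main obstacle} lies exactly here: one has to bound the regularities of the intersection ideals $\mm^kI^i\cap\mm^kI^{i'}$ in $R$ (and of the box-quotients) and check that the degree shifts accumulated during this process collapse into precisely the numbers $\reg(\mm^{s-i}I^i)+(s-i)$ and no more. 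The inputs should be the nesting of the $\mm^kI^i$ and standard facts such as $\reg(\mathfrak a)\le\reg(\mm\mathfrak a)\le\reg(\mathfrak a)+1$ and the near-linear behaviour of $a\mapsto\reg(\mm^a\mathfrak a)$; making the bookkeeping fit together is the technical heart of the proof.

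\emph{Lower bound.}
For each $i$ with $0\le i\le s$, set $H_i:=\mm^{s-i}I^i+\nn^{s-i+1}$. A short degree check on the decomposition shows $F^s\subseteq H_i$ (a summand $(\mm^aI^j)(\nn^aJ^t)$ lies in $\mm^{s-i}I^i$ when $a+2t\le s-i$ and in $\nn^{s-i+1}$ otherwise), and $\reg H_i=\reg(\mm^{s-i}I^i)+(s-i)$ by the computation above. From $0\to F^s\to H_i\to H_i/F^s\to 0$ one gets $\reg F^s\ge\reg H_i=\reg(\mm^{s-i}I^i)+(s-i)$ as soon as $\reg(H_i/F^s)<\reg(\mm^{s-i}I^i)+(s-i)$. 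I would prove this last inequality by analyzing $H_i/F^s$ --- the kernel of the natural surjection $T/F^s\twoheadrightarrow(R/\mm^{s-i}I^i)\otimes_\kk(S/\nn^{s-i+1})$ --- which decomposes again in terms of fiber products of strictly smaller order, so that an induction on $s$ (invoking the upper bound already established for smaller $s$, with $s=1$ checked directly) finishes it. Handling $J$ symmetrically, and noting that $i=0$ gives $\reg F^s\ge 2s$, these lower bounds together with the upper bound yield the first equality; the reduction in the first paragraph then gives the second.
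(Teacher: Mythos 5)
Your reduction of the second formula to the first is fine, and your upper bound is at least headed in a workable direction, but you yourself flag that the ``bookkeeping'' of the Mayer--Vietoris dismantling of $W_k$ is not done; that step is exactly where the content lies, and the paper handles it differently, via the filtration $G_t=H^s+\sum_{i\le t}(\mm\nn)^{s-i}J^i$ (with $H=I+\mm\nn$) whose consecutive intersections are computed \emph{exactly}, $G_{t-1}\cap(\mm\nn)^{s-t}J^t=\mm^{s-t+1}\nn^{s-t}J^t$, so that each quotient is the box module $(\mm^{s-t}/\mm^{s-t+1})\otimes_\kk(\nn^{s-t}J^t)$ of regularity exactly $\reg(\nn^{s-t}J^t)+s-t$. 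Without an analogous exact computation, your staircase argument is a plan, not a proof.

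The serious gap is in the lower bound. From $0\to F^s\to H_i\to H_i/F^s\to 0$ you can only conclude $\reg F^s\ge\reg H_i$ if $\reg(H_i/F^s)<\reg H_i$, and this premise is false in general: since $H_i/F^s$ is a submodule of $T/F^s$ with quotient $T/H_i$, one gets $\reg(T/F^s)\le\max\{\reg(H_i/F^s),\reg(T/H_i)\}$, so whenever the true value of $\reg F^s$ exceeds $\reg H_i$ (e.g.\ take $R=\kk[x]$, $I=(x^2)$, so that every $\reg(\mm^{s-i}I^i)+s-i=2s=\reg H_i$, and let $J$ have very large regularity) one is forced to have $\reg(H_i/F^s)\ge\reg F^s-1\ge\reg H_i$. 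Moreover $H_i/F^s$ is not of finite length (it contains the image of $\nn^{s-i+1}$, which survives multiplication by $S$), so there is no clean two-sided formula to fall back on, and the proposed ``induction on $s$ via smaller fiber products'' is not substantiated. The paper's proof avoids all of this by working with $U_s=(I+\nn)^s\cap(J+\mm)^s$ instead of $H_i$: it proves $\pp^{2s-1}U_s\subseteq F^s\subseteq U_s$, so that $U_s/F^s$ has \emph{finite length} and $\reg F^s=\max\{\reg(U_s/F^s)+1,\reg U_s\}$ holds as an exact equality, and then bounds $\reg(U_s/F^s)$ from below by the key colon computation $(F^s:_T\nn^{s-i})\cap(I^i\cap\mm^s)R=\mm^{s-i}I^i$, which manufactures nonzero classes $x\nn^{s-i}$ in $U_s/F^s$ of degree $\reg\bigl(I^i/\mm^{s-i}I^i\bigr)+s-i$; combined with $\reg(\mm^{s-i}I^i)=\max\{\reg I^i,\reg(I^i/\mm^{s-i}I^i)+1\}$ this yields the lower bound. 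Nothing in your proposal plays the role of this colon lemma, and without it (or a genuine substitute) the lower bound does not go through.
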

Special cases of the last two results were obtained in \cite[Theorems 5.1 and 6.1]{NgV19b}, where the hypothesis that either $\chara \kk=0$ or $I$ and $J$ are both monomial ideals, is required. To prove the formula of \Cref{thm_reg_ordinarypow}, the approach of \cite{NgV19b} is via \emph{Betti splitting}, based on the fact that, assuming either $\chara \kk=0$ or $I\subseteq \mm^2$ is a monomial ideal, the map $\mm^{s-t}I^t\to \mm^{s-t+1}I^{t-1}$ is Tor-vanishing for every $1\le t\le s$. Recall that a map $M\to N$ between $R$-modules is \emph{Tor-vanishing} if the induced map on Tor modules $\Tor^R_i(M,\kk)\to \Tor^R_i(N,\kk)$ is zero for all $i$. We note that this approach is not applicable if $\chara \kk$ is positive (see \Cref{rem_noTor-vanishing} for more details).

The proof of \Cref{thm_regordinarypow_main} exploits the special structure of the zero-th local cohomology $H^0_{\mm T+\nn T}(T/F^s)$, which is non-trivial for all $s\ge 2$ by \Cref{prop_depthordinarypower_main}. Our main observation in the proof of this theorem is that $T/F^s$ contains the finite length submodule $\left((I+\nn)^s\cap (J+\mm)^s\right)/F^s$, whose regularity is strongly related to that of powers of $I$ and $J$. The analysis of the regularity of $\left((I+\nn)^s\cap (J+\mm)^s\right)/F^s$ is the main novelty of our approach, which helps us to avoid using characteristic-dependent arguments.

Finally, we state our result on the depth and regularity for symbolic powers of $F$ in the case $\min \left\{\depth(R/I),\depth(S/J)\right\}=0$, complementing \Cref{thm_main}. Again we will assume that the hypotheses of \Cref{thm_main} are in force, and in particular, $\dim R, \dim S \ge 1$.
\begin{thm}[= \Cref{thm_depth_zerodepthcase}]
\label{thm_zerodepthcase_main}
Assume that $\min \left\{\depth(R/I),\depth(S/J)\right\}=0$. Then for all $s\ge 1$,  there are equalities $F^{(s)}=F^s$, and
\begin{align*}
\depth(T/F^{(s)})&=0,\\
\reg F^{(s)} &= \max \limits_{i\in [1,s]} \left\{\reg(\mm^{s-i}I^i)+s-i, \reg(\nn^{s-i}J^i)+s-i\right\}.
\end{align*}
\end{thm}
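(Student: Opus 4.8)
The plan is to establish the equality $F^{(s)} = F^s$ for every $s \ge 1$ first; once this is done, the depth and regularity formulas follow at once from the results already obtained for ordinary powers. The whole argument hinges on one observation: when $\min\{\depth(R/I), \depth(S/J)\} = 0$, the irrelevant maximal ideal $\mathfrak{M} := \mm T + \nn T$ of $T$ is an associated prime of $T/F$. Granting this, the definition of the symbolic power gives, for each $s \ge 1$,
\[
F^{(s)} = \bigcap_{\pp \in \Ass(T/F)} \bigl(F^s T_{\pp} \cap T\bigr) \subseteq F^s T_{\mathfrak{M}} \cap T = F^s ,
\]
where the last equality holds because $T/F^s$ is a graded module, so all its associated primes are homogeneous and hence contained in $\mathfrak{M}$; consequently every element of $T \setminus \mathfrak{M}$ is a nonzerodivisor on $T/F^s$, which forces $F^s T_{\mathfrak{M}} \cap T = F^s$. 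Together with the trivial inclusion $F^s \subseteq F^{(s)}$, this yields $F^{(s)} = F^s$.

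To prove $\mathfrak{M} \in \Ass(T/F)$, assume without loss of generality that $\depth(R/I) = 0$ (the other case is symmetric). Then the socle $(0 :_{R/I} \mm)$ is nonzero, so we may pick a nonzero homogeneous element $\bar g$ in it with homogeneous representative $g \in R$; thus $\mm g \subseteq I$ and $g \notin I$. Note $g$ has positive degree, for otherwise $\bar g$ would be a unit of $R/I$ and $\mm g \subseteq I$ would give $\mm \subseteq I \subseteq \mm^2$, impossible when $\dim R \ge 1$. Next I would verify that $F \cap R = I$: the $\kk$-algebra retraction $\phi \colon T \to R$ that kills the variables of $S$ has kernel $\nn T$, which contains both $JT$ and $\mm\nn$, while $\phi(IT) = I$; hence $\phi(F) = I$, and any $f \in F \cap R$ satisfies $f = \phi(f) \in I$. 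In particular $g \notin F$, so $(F : g)$ is a proper (and homogeneous) ideal. It contains $\mathfrak{M}$: for each variable $x_i$ of $R$ one has $x_i g \in \mm g \subseteq I \subseteq F$, and for each variable $y_j$ of $S$, since $g \in \mm T$ (because $\deg g \ge 1$) one has $y_j g \in \nn T \cap \mm T = \mm\nn \subseteq F$. A proper homogeneous ideal containing the maximal homogeneous ideal must equal it, so $(F : g) = \mathfrak{M}$, i.e.\ $\mathfrak{M} \in \Ass(T/F)$; in particular $\depth(T/F) = 0$.

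With $F^{(s)} = F^s$ in hand, the depth assertion reads $\depth(T/F^s) = 0$: for $s = 1$ this is the equality $\depth(T/F) = 0$ obtained above, and for $s \ge 2$ it is \Cref{prop_depthordinarypower_0}, whose hypothesis holds because $\depth(R/I) = 0 < \dim R$ forces $I \ne 0$. The regularity formula is then precisely the second, sharper formula of \Cref{thm_reg_ordinarypow}, which applies since $I$ is non-zero.

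I do not expect a substantive obstacle: the entire statement is a consequence of the single fact that vanishing depth of one factor pushes $\mathfrak{M}$ into $\Ass(T/F)$, which simultaneously gives $\depth(T/F) = 0$ and collapses the symbolic powers of $F$ onto its ordinary powers. The only points needing a little care are the identity $F \cap R = I$ (a routine consequence of the disjointness of the variable sets) and the remark that the socle element can be taken homogeneous of positive degree.
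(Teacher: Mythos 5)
Your proposal is correct and takes essentially the same route as the paper: both arguments rest on showing that $\mm T+\nn T\in\Ass_T(F)$ when one of the depths vanishes, deducing $F^{(s)}=F^s$ from this, and then quoting \Cref{prop_depthordinarypower_0} and \Cref{thm_reg_ordinarypow} for the depth and regularity of the ordinary powers (noting that $\depth(R/I)=0$ forces $I\neq(0)$ since $\dim R\ge 1$). The only difference is cosmetic: where the paper cites \Cref{lem_AssMin_F} and \Cref{lem_positivedepth_symbpow} (via part (2) of \Cref{lem_decomposition}), you reprove these facts directly, with an explicit socle element $g$ satisfying $(F:g)=\mm T+\nn T$ and the standard observation that localizing a graded module at the graded maximal ideal inverts only nonzerodivisors.
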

{\bf Organization}. The decompositions of symbolic powers of the fiber product of $I$ and $J$, as intersection and as sum, in the case $\min\left\{\depth(R/I),\depth(S/J)\right \}\ge 1$, are established in \Cref{sect_decomposition}. In \Cref{sect_symbpow_positivedepth}, we use the decomposition formulas of the last section to determine the depth and regularity of $F^{(s)}$ when the depths of $R/I$ and $S/J$ are both positive. \Cref{sect_ordinarypowers} is dedicated to the proofs of \Cref{prop_depthordinarypower_main} and \Cref{thm_regordinarypow_main}, from which we deduce \Cref{thm_zerodepthcase_main}. In the last \Cref{sect_remarks}, we discuss how our results can be adapted to the case of ``minimal'' symbolic powers, and propose some questions related to the main results.


\section{Background}
For standard notions and results of commutative algebra, we refer to \cite{BH98, Eis95, Pe11}. 

Let $\kk$ be a field, $R$ and $S$ be standard graded algebras over $\kk$. We set $T=R\otimes_{\kk}S$.  The following lemma is folklore; see, e.g., \cite[Lemma 3.1]{HNTT}. By abuse of notations, we use the same symbols to denote ideals of $R$ and $S$ and their extensions to $T$.
\begin{lem}
\label{lem_intersection}
In $T$, there is an identity $I\cap J=IJ$.
\end{lem}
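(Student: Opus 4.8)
The plan is to compute both $IJ$ and $I\cap J$ inside $T$ and to recognize each of them as the $\kk$-vector subspace $I\otimes_\kk S\cap R\otimes_\kk J = I\otimes_\kk J$ of $T$; throughout I use the standing abuse of notation, so that $I$ and $J$ denote both the ideals of $R$ and $S$ and their extensions $IT$ and $JT$. The inclusion $IJ\subseteq I\cap J$ is formal (true for any two ideals), so the real content is the reverse one. For the product: $IT\cdot JT$ is generated by the elements $(i\otimes 1)(1\otimes j)=i\otimes j$ with $i\in I$ and $j\in J$, and for such an element $T(i\otimes j)=(R\otimes_\kk S)(i\otimes j)=Ri\otimes_\kk Sj$. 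Summing over all $i\in I$, $j\in J$, and using $\sum_{i\in I}Ri=I$, $\sum_{j\in J}Sj=J$ together with distributivity of $\otimes_\kk$ over sums of subspaces, I obtain $IT\cdot JT=I\otimes_\kk J$.

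For the intersection I would fix $\kk$-vector space complements $R=I\oplus I'$ and $S=J\oplus J'$, so that $T=R\otimes_\kk S$ is the internal direct sum of the four subspaces $I\otimes_\kk J$, $I\otimes_\kk J'$, $I'\otimes_\kk J$, $I'\otimes_\kk J'$. The same computation as above (taking $J=S$, respectively $I=R$) identifies $IT=I\otimes_\kk S=(I\otimes_\kk J)\oplus(I\otimes_\kk J')$ and $JT=R\otimes_\kk J=(I\otimes_\kk J)\oplus(I'\otimes_\kk J)$ with coordinate subspaces of $T$; intersecting these two sums of coordinate subspaces leaves exactly the common summand, whence $IT\cap JT=I\otimes_\kk J$. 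Comparing with the previous paragraph gives $IJ=IT\cdot JT=I\otimes_\kk J=IT\cap JT=I\cap J$, as desired.

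I do not expect a genuine obstacle here: this is precisely the spot where the ``disjoint variables'' set-up is used, and it enters only through the direct-sum decomposition of $T$ recorded above. The single point that deserves a line of care is the claim that $IT$ (and likewise $JT$) really is the coordinate subspace $I\otimes_\kk S$ of $T$ — equivalently, that the natural map $I\otimes_\kk S\to R\otimes_\kk S$ is injective; this holds because $S$ is a free, hence flat, $\kk$-module. If one prefers to avoid even that, one may instead pick a $\kk$-basis of $R$ extending a $\kk$-basis of $I$ and a $\kk$-basis of $S$ extending a $\kk$-basis of $J$; the resulting basis of $T$ makes all of the decompositions, and the final comparison, completely explicit.
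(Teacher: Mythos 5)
Your proof is correct and complete. Note that the paper itself offers no argument for this lemma: it is dismissed as folklore with a pointer to \cite[Lemma 3.1]{HNTT}, so there is no ``paper proof'' to compare against. Your linear-algebra route is the standard one and is carried out cleanly: the identification $IT=I\otimes_\kk S$ and $JT=R\otimes_\kk J$ (valid because tensoring over the field $\kk$ preserves injections), the computation $IT\cdot JT=\sum_{i,j}Ri\otimes_\kk Sj=I\otimes_\kk J$, and the intersection of the two coordinate subspaces in the four-fold decomposition $T=(I\otimes_\kk J)\oplus(I\otimes_\kk J')\oplus(I'\otimes_\kk J)\oplus(I'\otimes_\kk J')$ all check out. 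Two small remarks: your argument never actually uses that $R$ and $S$ are polynomial rings in disjoint variables, only that $T$ is a tensor product over a field, which is exactly the generality in which the lemma is stated in the paper's background section; and you correctly isolate the one point needing justification (injectivity of $I\otimes_\kk S\to R\otimes_\kk S$), which is where flatness of $\kk$-vector spaces enters.
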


\begin{lem}[{\cite[Theorem 2.5 and its proof]{HNTT}}]
\label{lem_Ass_tensor}
Let $M, N$ be non-zero finitely generated modules over $R,S$, resp. For $P\in \Spec(T)$, let $\pp_1=P\cap R$, $\pp_2=P\cap S$.
\begin{enumerate}[\quad \rm(i)]
\item $P\in \Ass_T(M\otimes_\kk N)$ if and only if the following conditions hold:
\[
\pp_1 \in \Ass_R(M), \pp_2 \in \Ass_S(N), P \in \Min_T(T/(\pp_1+\pp_2)).
\]
\item $P\in \Min_T(M\otimes_\kk N)$ if and only if the following conditions hold:
\[
\pp_1 \in \Min_R(M), \pp_2 \in \Min_S(N), P \in \Min_T(T/(\pp_1+\pp_2)).
\]
\end{enumerate} 
\end{lem}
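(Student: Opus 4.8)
The plan is to first pin down the support of $M\otimes_\kk N$, read part (ii) off from the minimal primes of that support, obtain the ``if'' direction of (i) as an easy consequence, and finally prove the ``only if'' direction of (i) by a local cohomology computation.

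\emph{The support and part (ii).} I would begin by showing $\operatorname{Supp}_T(M\otimes_\kk N)=\{P\in\Spec T:\ P\cap R\in\operatorname{Supp}_R M,\ P\cap S\in\operatorname{Supp}_S N\}$. One inclusion is immediate, since a nonzero localization of $M\otimes_\kk N$ forces the matching localizations of $M$ and $N$ to be nonzero. For the other, given $P$ with $\pp_1:=P\cap R$ and $\pp_2:=P\cap S$ both in the supports, localizing at the multiplicative set generated by $R\setminus\pp_1$ and $S\setminus\pp_2$ turns $T$ into $R_{\pp_1}\otimes_\kk S_{\pp_2}$ and $M\otimes_\kk N$ into $M_{\pp_1}\otimes_\kk N_{\pp_2}$, with $P$ still present; reducing modulo the two maximal ideals yields a nonzero free $\kappa(\pp_1)\otimes_\kk\kappa(\pp_2)$-module, whose support is all of $V(\pp_1+\pp_2)\ni P$, so $(M\otimes_\kk N)_P\neq 0$. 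Part (ii) is then the description of the minimal elements of this set: if $P$ is minimal there but $\pp_1\notin\Min_R M$, choose $\pp_1'\subsetneq\pp_1$ in $\operatorname{Supp}_R M$ and a minimal prime $P'\subseteq P$ of $\pp_1'T+\pp_2 T$; since $R\to T/\pp_2 T$ and $S\to T/\pp_1'T$ are faithfully flat, $P'$ contracts to $\pp_1'$ on $R$ and to $\pp_2$ on $S$, so $P'$ belongs to the set and $P'\subsetneq P$, a contradiction. Hence $\pp_1\in\Min_R M$, symmetrically $\pp_2\in\Min_S N$, and minimality of $P$ then forces $P\in\Min_T(T/(\pp_1+\pp_2))$; the reverse inclusion is the same flatness bookkeeping. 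For the ``if'' direction of (i), the embeddings $R/\pp_1\hookrightarrow M$ and $S/\pp_2\hookrightarrow N$ (available when $\pp_1,\pp_2$ are associated) tensor to $T/(\pp_1+\pp_2)\cong(R/\pp_1)\otimes_\kk(S/\pp_2)\hookrightarrow M\otimes_\kk N$, and by (ii) each $P\in\Min_T(T/(\pp_1+\pp_2))$ is a minimal, hence associated, prime of $T/(\pp_1+\pp_2)$, hence of $M\otimes_\kk N$.

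\emph{The ``only if'' direction of (i).} Let $P\in\Ass_T(M\otimes_\kk N)$ with $\pp_1=P\cap R$, $\pp_2=P\cap S$, and localize as above so that $T$ becomes $T''=R_{\pp_1}\otimes_\kk S_{\pp_2}$, the ideals $\pp_1,\pp_2$ are maximal, $PT''\in\Ass_{T''}(M_{\pp_1}\otimes_\kk N_{\pp_2})$, and $PT''$ contains $\mathfrak c:=\pp_1 T''+\pp_2 T''$. I would then combine the standard identity $\Ass_{T''}(E)\cap V(\mathfrak c)=\Ass_{T''}(\Gamma_{\mathfrak c}(E))$ with the Künneth-type identity $\Gamma_{\mathfrak c}(M_{\pp_1}\otimes_\kk N_{\pp_2})=\Gamma_{\pp_1}(M_{\pp_1})\otimes_\kk\Gamma_{\pp_2}(N_{\pp_2})$, which holds because $\kk$ is a field (choose a $\kk$-basis on one side). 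Thus $PT''$ is associated to $\Gamma_{\pp_1}(M_{\pp_1})\otimes_\kk\Gamma_{\pp_2}(N_{\pp_2})$; in particular both factors are nonzero, i.e. $\depth_{R_{\pp_1}}M_{\pp_1}=\depth_{S_{\pp_2}}N_{\pp_2}=0$, which is exactly $\pp_1\in\Ass_R M$ and $\pp_2\in\Ass_S N$. Moreover $\Gamma_{\pp_1}(M_{\pp_1})$ and $\Gamma_{\pp_2}(N_{\pp_2})$ are nonzero finite-length modules, so each admits a finite filtration with all quotients equal to the residue field; tensoring, $\Gamma_{\pp_1}(M_{\pp_1})\otimes_\kk\Gamma_{\pp_2}(N_{\pp_2})$ has a finite filtration with every quotient isomorphic to $\kappa(\pp_1)\otimes_\kk\kappa(\pp_2)=T''/\mathfrak c$, so its associated primes coincide with those of $T''/\mathfrak c$. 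Hence $PT''\in\Ass_{T''}(T''/\mathfrak c)$, and it suffices to know $\Ass_{T''}(T''/\mathfrak c)=\Min_{T''}(T''/\mathfrak c)$, for then $P\in\Min_T(T/(\pp_1+\pp_2))$ after undoing the localization.

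\emph{The main obstacle.} The crux is precisely this last point: the Noetherian ring $\kappa(\pp_1)\otimes_\kk\kappa(\pp_2)$, a tensor product of two field extensions of $\kk$, has no embedded associated primes (indeed it is Cohen--Macaulay). I would establish this by reduction to finitely generated extensions: $\kappa(\pp_2)$ is the filtered union of its finitely generated subextensions $L_\alpha$, so $\kappa(\pp_1)\otimes_\kk\kappa(\pp_2)$ is the filtered colimit of the Noetherian rings $\kappa(\pp_1)\otimes_\kk L_\alpha$ along faithfully flat transition maps; and for finitely generated $K,L\supseteq\kk$, writing $L$ as a finite extension of a purely transcendental subextension $\kk(\underline y)$ exhibits $K\otimes_\kk L=(K\otimes_\kk\kk(\underline y))\otimes_{\kk(\underline y)}L$ as a finite free algebra over the regular ring $K\otimes_\kk\kk(\underline y)$ (a localization of the polynomial ring $K[\underline y]$), hence as a Cohen--Macaulay ring. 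Carrying this through the colimit needs a short argument but is harmless since the colimit is already Noetherian; alternatively one may simply invoke the known fact that a Noetherian tensor product of field extensions is Cohen--Macaulay.
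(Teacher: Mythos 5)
Your proof is correct, but it cannot be ``the same approach as the paper'' for the simple reason that the paper offers no proof of this lemma at all: it is imported verbatim from \cite[Theorem 2.5 and its proof]{HNTT}. Measured against that source, your argument is a legitimate, self-contained alternative. The support computation and the deduction of (ii) from minimality via going-down for the flat maps $R/\pp_1'\to (R/\pp_1')\otimes_\kk(S/\pp_2)$ are sound, and the ``if'' direction of (i) via tensoring the embeddings $R/\pp_1\hookrightarrow M$, $S/\pp_2\hookrightarrow N$ is the standard move. The genuinely nice step is your handling of the ``only if'' direction: after localizing to $T''=R_{\pp_1}\otimes_\kk S_{\pp_2}$, the identity $\Ass_{T''}(E)\cap V(\mathfrak c)=\Ass_{T''}(\Gamma_{\mathfrak c}(E))$ combined with the splitting $\Gamma_{\mathfrak c}(M_{\pp_1}\otimes_\kk N_{\pp_2})=\Gamma_{\pp_1}(M_{\pp_1})\otimes_\kk\Gamma_{\pp_2}(N_{\pp_2})$ (valid by choosing a $\kk$-basis and using $(V'\otimes W)\cap(V\otimes W')=V'\otimes W'$) cleanly extracts both $\pp_1\in\Ass_R M$ and $\pp_2\in\Ass_S N$, and the composition-series filtration reduces everything to $\Ass(\kappa(\pp_1)\otimes_\kk\kappa(\pp_2))=\Min(\kappa(\pp_1)\otimes_\kk\kappa(\pp_2))$. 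That last fact is indeed the crux in \cite{HNTT} as well, so the two routes converge on the same kernel --- the absence of embedded primes in a Noetherian tensor product of field extensions --- but your route gets there via local cohomology rather than the bookkeeping of \emph{ibid.} One simplification you could make: since $R$ and $S$ are standard graded, hence finitely generated, $\kk$-algebras, the residue fields $\kappa(\pp_1),\kappa(\pp_2)$ are automatically finitely generated extensions of $\kk$, so the filtered-colimit reduction in your final paragraph is unnecessary; the Noether-normalization argument exhibiting $\kappa(\pp_1)\otimes_\kk\kappa(\pp_2)$ as finite and free over the regular ring $\kappa(\pp_1)\otimes_\kk\kk(\underline y)$ already applies directly and yields Cohen--Macaulayness, hence $\Ass=\Min$.
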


Let $R$ be a standard graded $\kk$-algebra, and $M$ a finitely generated graded $R$-module. Then \emph{relative Castelnuovo--Mumford regularity} of $M$ over $R$ is
\[
\reg_R M:=\sup\{j-i\mid \Tor^R_i(M,\kk)_j\neq 0\}.
\]
The  \emph{absolute Castelnuovo--Mumford regularity} of $M$ is defined in terms of local cohomology supported at $\mm$ as
\[
\reg M:= \sup\{i+j\mid H^i_\mm(M)_j\neq 0\}.
\]
When $R$ is a regular ring, then thanks to local duality, both notions of regularity coincide: $\reg_R M= \reg M$. The next result is folklore; see, e.g., \cite[Lemma 2.3]{NgV19a}.
\begin{lem}
\label{lem_tensor}
Let $R, S$ be standard graded $\kk$-algebras, and $M, N$ be finitely generated graded modules over $R, S$, respectively. Then for $T=R \otimes_\kk S$, there is an equality $\reg_T (M\otimes_\kk N)=\reg_R M+ \reg_S N$.
\end{lem}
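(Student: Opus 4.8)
The plan is to deduce the identity from the graded Künneth formula for Tor over a tensor product of algebras, namely the isomorphism of graded $\kk$-vector spaces
\[
\Tor^T_n(M\otimes_\kk N,\kk)\;\cong\;\bigoplus_{i+j=n}\Tor^R_i(M,\kk)\otimes_\kk \Tor^S_j(N,\kk)\qquad(n\ge 0),
\]
and then to read off the regularity equality by bookkeeping on internal degrees. If $M=0$ or $N=0$ the claim is vacuous (both sides equal $-\infty$ under the convention $\sup\emptyset=-\infty$), so we may assume $M\ne 0\ne N$.

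To establish the displayed isomorphism, first fix graded free resolutions $F_\bullet\to M$ over $R$ and $G_\bullet\to N$ over $S$, and form the totalization $F_\bullet\otimes_\kk G_\bullet$, a complex of graded $T$-modules whose term in homological degree $n$ is $\bigoplus_{i+j=n}F_i\otimes_\kk G_j$; this is graded free over $T$, since the $\kk$-tensor product of a graded free $R$-module and a graded free $S$-module is a graded free $T$-module. Because $\kk$ is a field, every module here is $\kk$-flat, so the classical Künneth theorem over $\kk$ (with vanishing $\Tor_1^\kk$-correction) gives $H_n(F_\bullet\otimes_\kk G_\bullet)\cong\bigoplus_{i+j=n}H_i(F_\bullet)\otimes_\kk H_j(G_\bullet)$, which is $M\otimes_\kk N$ for $n=0$ and $0$ otherwise; hence $F_\bullet\otimes_\kk G_\bullet$ is a graded free $T$-resolution of $M\otimes_\kk N$. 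Now compute Tor from this resolution: there is an isomorphism of complexes of graded $\kk$-vector spaces
\[
(F_\bullet\otimes_\kk G_\bullet)\otimes_T\kk\;\cong\;(F_\bullet\otimes_R\kk)\otimes_\kk(G_\bullet\otimes_S\kk),
\]
compatible with differentials, because $(F_i\otimes_\kk G_j)\otimes_T\kk\cong(F_i\otimes_R\kk)\otimes_\kk(G_j\otimes_S\kk)$ term by term. Taking homology and applying Künneth over $\kk$ once more yields the displayed isomorphism, refined with internal grading to $\Tor^T_n(M\otimes_\kk N,\kk)_\ell\cong\bigoplus_{i+j=n}\bigoplus_{p+q=\ell}\Tor^R_i(M,\kk)_p\otimes_\kk\Tor^S_j(N,\kk)_q$. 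I expect this passage — verifying that the external tensor product of the resolutions is itself a $T$-free resolution — to be the only place needing care, and it is handled entirely by flatness over the field $\kk$.

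Finally, by definition $\reg_T(M\otimes_\kk N)=\sup\{\ell-n\mid \Tor^T_n(M\otimes_\kk N,\kk)_\ell\ne 0\}$, and by the graded isomorphism the nonvanishing of $\Tor^T_n(M\otimes_\kk N,\kk)_\ell$ is equivalent to the existence of $i,j,p,q$ with $i+j=n$, $p+q=\ell$, $\Tor^R_i(M,\kk)_p\ne 0$ and $\Tor^S_j(N,\kk)_q\ne 0$. Then $\ell-n=(p-i)+(q-j)$, and since the two index sets over which $(i,p)$ and $(j,q)$ range are nonempty (as $M,N\ne 0$) and vary independently, the supremum of the sum equals the sum of the suprema, i.e. $\reg_R M+\reg_S N$, which completes the argument.
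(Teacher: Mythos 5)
Your argument is correct and complete: the external tensor product of free resolutions is a $T$-free resolution by Künneth over the field $\kk$, the graded Tor modules decompose accordingly, and the degree bookkeeping (with the sup-of-sums equals sum-of-sups observation for nonempty independent index sets) gives the equality. The paper itself offers no proof, citing the statement as folklore from \cite[Lemma 2.3]{NgV19a}, and the argument there is exactly this Künneth computation, so your route coincides with the standard one.
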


The following is essentially \cite[Lemma 5.3]{NgV19b}, except that the crucial hypothesis ``$R$ is a polynomial ring'' that was mistakenly omitted in part (ii) of \cite[Lemma 5.3]{NgV19b} is added below. In the result, for a finitely generated graded $R$-module $M$, $d(M)$ denotes the maximal degree of a minimal homogeneous generator of $M$.
 
\begin{lem}[Eisenbud--Ulrich]
\label{lem_regmM}
Let $(R,\mm)$ be a standard graded $\kk$-algebra.  Let $M\neq 0$ be a finitely generated graded $R$-module such that $\depth M\ge 1$.
\begin{enumerate}[\quad \rm (i)]
 \item For all $s\ge 1$, there is an equality $\reg (\mm^sM)= \max \left\{\reg M, \reg \dfrac{M}{\mm^s M}+1\right\}$.
\item \textup{(See \c{S}ega \cite[Theorem 3.2]{Se01})} Assume furthermore that $R$ is a standard graded polynomial ring over $\kk$, and $M$ is generated in a single degree. Then for all $s\ge 1$, there is an equality
\[
\reg (\mm^sM)= \max \left\{\reg M, s+d(M)\right\}.
\]
In particular, for all $s\ge \reg M-d(M)$, $\mm^sM$ has a linear resolution.
\end{enumerate}
\end{lem}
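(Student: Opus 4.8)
The statement is classical---(i) is the Eisenbud--Ulrich formula and (ii) is \c{S}ega's theorem---so the point is to record a proof in the present setup. The plan is to prove (i) through the long exact sequence in local cohomology attached to $0\to\mm^sM\to M\to M/\mm^sM\to 0$, and to deduce (ii) from (i) by an elementary count of graded components (alternatively one may simply cite \cite[Theorem 3.2]{Se01} for (ii)).

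For (i) I would work with absolute regularity in the form $\reg N=\sup\{\,i+j\mid H^i_{\mm}(N)_j\neq 0\,\}$. First note that $M/\mm^sM$ is annihilated by $\mm^s$ and so has finite length; that $M/\mm^sM\neq 0$ by graded Nakayama, since $\mm^sM\subseteq\mm M\subsetneq M$; and that $\mm^sM\neq 0$, since $\mm^sM=0$ would give $M=H^0_{\mm}(M)$, contradicting $\depth M\ge 1$. Applying $H^\bullet_{\mm}(-)$ to the displayed sequence, the hypothesis $\depth M\ge 1$ gives $H^0_{\mm}(M)=0$, hence also $H^0_{\mm}(\mm^sM)=0$, while $M/\mm^sM$ being of finite length gives $H^0_{\mm}(M/\mm^sM)=M/\mm^sM$ and $H^i_{\mm}(M/\mm^sM)=0$ for $i\ge 1$. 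Thus the long exact sequence degenerates to a short exact sequence
\[
0\to M/\mm^sM\to H^1_{\mm}(\mm^sM)\to H^1_{\mm}(M)\to 0
\]
together with isomorphisms $H^i_{\mm}(\mm^sM)\cong H^i_{\mm}(M)$ for all $i\ge 2$. Reading off the top nonvanishing degrees---using that the top degree of the middle term of a short exact sequence is the maximum of those of the outer terms, and that $\reg(M/\mm^sM)$ equals its top degree because $M/\mm^sM$ has finite length---then gives exactly $\reg(\mm^sM)=\max\{\reg(M/\mm^sM)+1,\ \reg M\}$.

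For (ii), put $d=d(M)$; as $M$ is generated in the single degree $d$ we have $M_j=R_{j-d}M_d$ for $j\ge d$ and $M_j=0$ for $j<d$. Using only that $R$ is standard graded---so $(\mm^s)_c=R_c$ and $R_c=R_{c-s}R_s$ for $c\ge s$, and $R_c=R_1R_{c-1}$---I would check that $(\mm^sM)_j=M_j$ for $j\ge s+d$ (note that the summand $(\mm^s)_sM_{j-s}$ equals $R_sR_{j-s-d}M_d=R_{j-d}M_d=M_j$) and $(\mm^sM)_j=0$ for $j<s+d$; in particular $\mm^sM$ is generated in the single degree $s+d$, with $(\mm^sM)_{s+d}=R_sM_d$, and $M/\mm^sM$ is supported in degrees $d,\dots,s+d-1$. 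Next, $\depth M\ge 1$ forces $M_{s+d-1}\neq 0$: for $0\neq m\in M_d$ one has $\mm^{s-1}m\neq 0$, so $R_cm\neq 0$ for some $c\ge s-1$, and then $R_{s-1}m\neq 0$ by downward induction via $R_c=R_1R_{c-1}$; the same reasoning with $c\ge s$ shows $R_sm\neq 0$, so $(\mm^sM)_{s+d}\neq 0$. Hence $\reg(M/\mm^sM)=s+d-1$, and feeding this into (i) gives $\reg(\mm^sM)=\max\{\reg M,\ s+d\}$. Finally, since $\mm^sM$ is generated in the single degree $s+d$ and $R$ is a polynomial ring (so absolute and relative regularity coincide), $\mm^sM$ has a linear resolution exactly when $\reg(\mm^sM)=s+d$, i.e.\ when $\reg M\le s+d$, i.e.\ for all $s\ge\reg M-d$.

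The only genuinely delicate input is the hypothesis $\depth M\ge 1$: it is precisely what kills $H^0_{\mm}(M)$ and so degenerates the long exact sequence in (i), and precisely what forces $M_{s+d-1}\neq 0$ (equivalently $(\mm^sM)_{s+d}\neq 0$) for the lower bound in (ii). Everything else is bookkeeping with graded components, so I do not anticipate a serious obstacle beyond using the depth hypothesis in the right places.
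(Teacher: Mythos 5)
Your proof is correct. Note that the paper itself offers no argument for this lemma---it is stated as folklore, deferring to \cite[Lemma 5.3]{NgV19b} for (i) and to \c{S}ega \cite[Theorem 3.2]{Se01} for (ii)---so there is no in-paper proof to compare against; what you have written is a complete, self-contained substitute. Part (i) is the standard Eisenbud--Ulrich argument via the long exact sequence of local cohomology, and you use the hypothesis $\depth M\ge 1$ exactly where it is needed (to kill $H^0_{\mm}(M)$, hence $H^0_{\mm}(\mm^sM)$, and to isolate the contribution of the finite-length module $M/\mm^sM$ inside $H^1_{\mm}(\mm^sM)$). Your derivation of (ii) from (i) by showing $(\mm^sM)_j=M_j$ for $j\ge s+d(M)$, $(\mm^sM)_j=0$ for $j<s+d(M)$, and $M_{s+d(M)-1}\neq 0$ (again via $\depth M\ge 1$, which guarantees $\mm^{s-1}m\neq 0$ for $0\neq m\in M_d$) is an elementary alternative to \c{S}ega's original, more homological proof, and it correctly yields $\reg(M/\mm^sM)=s+d(M)-1$ and hence the stated formula; the linear-resolution consequence then follows since $\mm^sM$ is generated in the single degree $s+d(M)$ and relative and absolute regularity agree over a polynomial ring. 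The only cosmetic caveat is that in (i) the ambient $R$ is only a standard graded $\kk$-algebra, so one must (as you do) work throughout with the local-cohomology notion of regularity rather than the Tor-theoretic one.
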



\subsection{Symbolic powers}
For a recent survey on symbolic powers of ideals, we refer to \cite{DDSG+18}.

In this paper, we use symbolic powers that are defined in terms of associated primes (as in \cite{DDSG+18}), not minimal primes. The former notion is more general than the latter, as we will explain below. Let $R$ be a noetherian ring, and $I$ an ideal of $R$. For an integer $s\ge 1$, define the $s$-th \emph{symbolic power} of $I$ to be
\[
I^{(s)}=\bigcap_{P \in \Ass(R/I)} \left(I^sR_P \cap R\right).
\]
There is also a notion of symbolic powers using minimal primes:
\[
\up{m}I^{(s)}=\bigcap_{P \in \Min(R/I)} \left(I^sR_P \cap R\right).
\]
Denote  $L={}\up{m}I^{(1)}$, then we have equalities $\Ass(R/L)=\Min(R/L)=\Min(R/I)$. From this, it is not hard to see that for all $s\ge 1$, the equality $\up{m}I^{(s)} = L^{(s)}$ holds. Hence the notion of ``associated'' symbolic power is more general than the notion of ``minimal'' symbolic powers.

The following lemma is folklore. We include an easy proof for completeness.
\begin{lem}
\label{lem_symbolic_powers}
Let $R$ be a noetherian ring, $I$ a proper ideal of $R$ such that $\Ass(I)=\Min(I)$. Let $I=Q_1\cap \cdots \cap Q_d$ be an irredundant primary decomposition of $I$. Then for all $s\ge 1$, there is an equality $I^{(s)}=Q_1^{(s)} \cap \cdots \cap Q_d^{(s)}$.
\end{lem}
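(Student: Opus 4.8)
The plan is to reduce the statement to a local computation at each minimal prime of $I$. Write $P_i=\sqrt{Q_i}$ for $1\le i\le d$. Since the decomposition is irredundant, the $P_i$ are pairwise distinct, and because $\Ass(R/I)=\Min(R/I)$ by hypothesis, we have $\Ass(R/I)=\Min(R/I)=\{P_1,\dots,P_d\}$; in particular the $P_i$ are pairwise incomparable. Moreover, each $Q_i$ is $P_i$-primary, so $\Ass(R/Q_i)=\Min(R/Q_i)=\{P_i\}$, which means its symbolic powers are simply $Q_i^{(s)}=Q_i^sR_{P_i}\cap R$.

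First I would unwind the definition: $I^{(s)}=\bigcap_{P\in\Ass(R/I)}(I^sR_P\cap R)=\bigcap_{i=1}^d (I^sR_{P_i}\cap R)$. Hence it suffices to show $I^sR_{P_i}\cap R=Q_i^sR_{P_i}\cap R$ for each $i$, and for this the key claim is the equality of localizations $IR_{P_i}=Q_iR_{P_i}$. To see this, localize the primary decomposition to get $IR_{P_i}=\bigcap_{j=1}^d Q_jR_{P_i}$; for $j\ne i$, incomparability of $P_i$ and $P_j$ provides an element $x\in P_j\setminus P_i$, a suitable power of which lies in $Q_j$ and stays outside $P_i$, hence is a unit in $R_{P_i}$, so $Q_jR_{P_i}=R_{P_i}$. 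Thus only the $j=i$ term survives.

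Once $IR_{P_i}=Q_iR_{P_i}$ is in hand, raising to the $s$-th power gives $I^sR_{P_i}=Q_i^sR_{P_i}$, so $I^sR_{P_i}\cap R=Q_i^sR_{P_i}\cap R=Q_i^{(s)}$; intersecting over $i$ yields $I^{(s)}=\bigcap_{i=1}^d Q_i^{(s)}$. The only step that requires genuine (if mild) attention is the claim $Q_jR_{P_i}=R_{P_i}$ for $j\neq i$, i.e.\ making sure that irredundancy together with $\Ass(R/I)=\Min(R/I)$ really forces the associated primes to be pairwise incomparable; the remaining manipulations with localizations and contractions are entirely formal.
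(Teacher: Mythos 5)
Your proof is correct and follows essentially the same route as the paper's: identify $P_i=\sqrt{Q_i}$ with the minimal primes of $I$, observe that $IR_{P_i}=Q_iR_{P_i}$, and intersect the resulting localizations. You simply spell out in more detail the localization step that the paper states in one line.
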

\begin{proof}
Denote $P_i=\sqrt{Q_i}$, $i=1,\ldots,d$. Since each $P_i$ is a minimal prime of $I$, $IR_{P_i}=Q_iR_{P_i}$ for $1\le i\le d$. So
\[
I^{(s)}=\bigcap_{i=1}^d \left(I^sR_{P_i}\cap R \right)= \bigcap_{i=1}^d \left(Q_i^sR_{P_i}\cap R\right)=\bigcap_{i=1}^d Q_i^{(s)},
\]
which is the desired conclusion.
\end{proof}

We will use the following expansion for symbolic powers of sums several times.
\begin{lem}[H\`a--Jayanthan--Kumar--Nguyen {\cite[Theorem 4.1]{HJKN}}]
\label{lem_binomialformula}
 Let $R, S$ be noetherian algebras over a field $\kk$ such that $T=R\otimes_\kk S$ is also noetherian. Let $I \subseteq R, J \subseteq S$ be nonzero proper ideals. Then, for any integer $s \ge 1$, there is an equality $(I+J)^{(s)} = \sum\limits_{i=0}^s I^{(i)}J^{(s-i)}$.
\end{lem}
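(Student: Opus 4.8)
The plan is to establish the two inclusions separately, working throughout with the description $I^{(s)}=\bigcap_{\pp\in\Ass(R/I)}\bigl(I^sR_{\pp}\cap R\bigr)$ and with \Cref{lem_Ass_tensor}, which identifies $\Ass_T\bigl(T/(I+J)\bigr)$ as the set of primes $P\subseteq T$ with $\pp:=P\cap R\in\Ass(R/I)$, $\qq:=P\cap S\in\Ass(S/J)$, and $P\in\Min_T\bigl(T/(\pp+\qq)\bigr)$. The key elementary observation is that for $\pp\in\Ass(R/I)$ the chain $I^i\subseteq I^{(i)}\subseteq I^iR_{\pp}\cap R$ forces $I^{(i)}R_{\pp}=I^iR_{\pp}$, so, extending to $T_P$, one gets $I^{(i)}T_P=I^iT_P$ for every $P\in\Ass_T(T/(I+J))$, and symmetrically $J^{(j)}T_P=J^jT_P$. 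Combined with the binomial identity $(I+J)^s=\sum_{i=0}^sI^iJ^{s-i}$, this upgrades to the local equality
\[
(I+J)^sT_P=\Bigl(\sum_{i=0}^sI^{(i)}J^{(s-i)}\Bigr)T_P\qquad\text{for all }P\in\Ass_T\bigl(T/(I+J)\bigr).
\]
In particular $I^{(i)}J^{(s-i)}T_P=I^iJ^{s-i}T_P\subseteq(I+J)^sT_P$ for each $i$ and each such $P$, hence $I^{(i)}J^{(s-i)}\subseteq\bigcap_P\bigl((I+J)^sT_P\cap T\bigr)=(I+J)^{(s)}$, and summing over $i$ gives $\sum_{i=0}^sI^{(i)}J^{(s-i)}\subseteq(I+J)^{(s)}$.

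For the reverse inclusion, put $\mathfrak{b}:=\sum_{i=0}^sI^{(i)}J^{(s-i)}$. Intersecting the displayed equality over all $P\in\Ass_T(T/(I+J))$ and contracting to $T$ gives $(I+J)^{(s)}=\bigcap_P\bigl(\mathfrak{b}\,T_P\cap T\bigr)$, so what has to be proved is now \emph{equivalent} to
\[
\mathfrak{b}=\bigcap_{P\in\Ass_T(T/(I+J))}\bigl(\mathfrak{b}\,T_P\cap T\bigr),
\]
i.e.\ to the assertion that $\mathfrak{b}$ is saturated with respect to the finite set $\Ass_T(T/(I+J))$, equivalently that every prime of $\Ass_T(T/\mathfrak{b})$ is contained in a prime of $\Ass_T(T/(I+J))$. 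I expect this to be the main obstacle.

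To attack it I would first try to reduce to the case where $I$ and $J$ are primary. By \Cref{lem_intersection} one may rewrite $\mathfrak{b}=\sum_{i=0}^s\bigl(I^{(i)}\cap J^{(s-i)}\bigr)$, and a short computation with radicals gives $\sqrt{\mathfrak{b}}=\sqrt{I+J}$, so $\Min_T(T/\mathfrak{b})=\Min_T(T/(I+J))$ and only embedded primes need attention. Fixing irredundant primary decompositions $I=\bigcap_aQ_a$, $J=\bigcap_bQ'_b$, the flatness of $S\to T/I'$ for any ideal $I'\subseteq R$ (note $T/I'$ is a free $S$-module) yields the distributivity relation $(I'+Y)\cap(I'+Z)=I'+(Y\cap Z)$ in $T$ for $Y,Z\subseteq S$, hence $I+J=\bigcap_{a,b}(Q_a+Q'_b)$; by \Cref{lem_Ass_tensor} each $Q_a+Q'_b$ satisfies $\Ass_T=\Min_T$, so \Cref{lem_symbolic_powers} applies to it and expresses its symbolic powers through a primary decomposition which, again by \Cref{lem_Ass_tensor}, is controlled by the symbolic powers of $Q_a$ and $Q'_b$. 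The crux is that when $\pp_a:=\sqrt{Q_a}$ is an \emph{embedded} prime of $I$ one has $IR_{\pp_a}\subsetneq Q_aR_{\pp_a}$ (because $\pp_a$ is not minimal among the associated primes of $I$), so $(I+J)T_P$ and $(Q_a+Q'_b)T_P$ may differ at the relevant primes $P$ and one cannot merely write $(I+J)^{(s)}=\bigcap_{a,b}(Q_a+Q'_b)^{(s)}$. Overcoming this needs a more careful analysis of $\Ass_T(T/\mathfrak{b})$ — exhibiting, for each of its primes $P'$, primes $\pp\in\Ass(R/I)$, $\qq\in\Ass(S/J)$ and a minimal prime $P$ of $\pp+\qq$ with $P'\subseteq P$ — or, alternatively, a direct argument that takes $f\in(I+J)^{(s)}$, expands it along a $\kk$-basis of $S$, and reorganizes it term by term as an element of $\sum_{i=0}^sI^{(i)}J^{(s-i)}$. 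It is exactly here that one uses the definition of symbolic powers via \emph{all} associated primes, not only the minimal ones, and that the argument becomes independent of $\chara\kk$.
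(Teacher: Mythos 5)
You have the easy half right: for $P\in\Ass_T(T/(I+J))$ with $\pp=P\cap R\in\Ass(R/I)$ and $\qq=P\cap S\in\Ass(S/J)$, the chain $I^i\subseteq I^{(i)}\subseteq I^iR_{\pp}\cap R$ does give $I^{(i)}T_P=I^iT_P$, hence $\bigl(\sum_i I^{(i)}J^{(s-i)}\bigr)T_P=(I+J)^sT_P$, and the inclusion $\sum_i I^{(i)}J^{(s-i)}\subseteq (I+J)^{(s)}$ follows. Your reduction of the reverse inclusion to the assertion that every associated prime of $T/\mathfrak{b}$, where $\mathfrak{b}=\sum_i I^{(i)}J^{(s-i)}$, is contained in some prime of $\Ass_T(T/(I+J))$ is also the correct reformulation. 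But that assertion is exactly the substantive content of the theorem, and you do not prove it: you propose a reduction to the primary case, observe yourself that it breaks down at embedded primes (since $(I+J)T_P$ and $(Q_a+Q'_b)T_P$ need not agree there), and end by listing two strategies that ``would be needed.'' That is a genuine gap, not a proof.

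For what it is worth, the paper does not prove this lemma either; it is quoted verbatim from \cite[Theorem 4.1]{HJKN}, so there is no internal argument to compare against. The way the gap is closed in that reference (and in the precursor \cite{HNTT}) is not by primary decomposition but by filtering $T/\mathfrak{b}$ by the ideals $\mathfrak{b}_t=\sum_{i=0}^{t} I^{(i)}J^{(s-i)}+I^{(t+1)}T$ (or a similar chain), whose successive quotients embed into direct sums of modules of the form $\bigl(I^{(i)}/I^{(i+1)}\bigr)\otimes_\kk\bigl(J^{(j)}/J^{(j+1)}\bigr)$. One then applies the description of associated primes of tensor products (\Cref{lem_Ass_tensor}) together with the fact that every prime in $\Ass_R\bigl(I^{(i)}/I^{(i+1)}\bigr)$ is contained in a prime of $\Ass_R(R/I)$ (this is where the ``all associated primes'' definition of symbolic powers enters, via \cite[Lemma~2.2]{HJKN}), to conclude that $\Ass_T(T/\mathfrak{b})$ is controlled as required. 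If you want a complete argument, that filtration step is the piece you still need to supply.
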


The next lemma, which is perhaps folklore, will be useful for the proof of \Cref{thm_depth_reg}. 
\begin{lem}
\label{lem_positivedepth_symbpow}
Let $R$ be a standard graded $\kk$-algebra, and $I$ a proper homogeneous ideal. 
\begin{enumerate}[\quad \rm (i)]
\item If $\depth(R/I)=0$, then for each $s\ge 1$, there is an equality 
$I^{(s)}=I^s.$
\item If $\depth(R/I)\ge 1$, then for each $s\ge 1$, the inequality
$\depth(R/I^{(s)})\ge 1$ holds.
\end{enumerate}
\end{lem}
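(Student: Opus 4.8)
The plan is to handle the two parts separately, in both cases invoking the standard characterization that for a proper homogeneous ideal $\mathfrak{a}$ of a standard graded $\kk$-algebra $R$ with graded maximal ideal $\mathfrak{m}$ one has $\depth(R/\mathfrak{a})=0$ if and only if $\mathfrak{m}\in\Ass(R/\mathfrak{a})$, together with the fact that every associated prime of a homogeneous ideal is homogeneous, hence contained in $\mathfrak{m}=R_+$ (since $R_0=\kk$). Recall also that $I^s\subseteq I^{(s)}$ always, and that $I^{(s)}$ is a proper ideal whenever $I$ is.

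For part (i): since $\depth(R/I)=0$ forces $\mathfrak{m}\in\Ass(R/I)$, the ideal $I^sR_{\mathfrak{m}}\cap R$ occurs among the terms of the defining intersection $I^{(s)}=\bigcap_{P\in\Ass(R/I)}(I^sR_P\cap R)$, so $I^{(s)}\subseteq I^sR_{\mathfrak{m}}\cap R$. It therefore suffices to prove $I^sR_{\mathfrak{m}}\cap R=I^s$, which amounts to injectivity of the localization map $R/I^s\to R_{\mathfrak{m}}/I^sR_{\mathfrak{m}}$. But the set of zerodivisors on $R/I^s$ is $\bigcup_{P\in\Ass(R/I^s)}P$, and each such $P$ is a homogeneous prime, hence contained in $\mathfrak{m}$; thus no element of $R\setminus\mathfrak{m}$ is a zerodivisor on $R/I^s$, so the map is injective. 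Combined with $I^s\subseteq I^{(s)}$ this gives $I^{(s)}=I^s$.

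For part (ii): assume $\depth(R/I)\ge 1$, i.e.\ $\mathfrak{m}\notin\Ass(R/I)$, and suppose toward a contradiction that $\depth(R/I^{(s)})=0$, so $\mathfrak{m}\in\Ass(R/I^{(s)})$; then there is $x\in R\setminus I^{(s)}$ with $\mathfrak{m}x\subseteq I^{(s)}$. Since $x\notin I^{(s)}=\bigcap_{P\in\Ass(R/I)}(I^sR_P\cap R)$, fix $P\in\Ass(R/I)$ with $x/1\notin I^sR_P$; by the depth hypothesis $P\neq\mathfrak{m}$, hence $P\subsetneq\mathfrak{m}$, and we may pick $m\in\mathfrak{m}\setminus P$, which is a unit in $R_P$. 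From $\mathfrak{m}x\subseteq I^{(s)}\subseteq I^sR_P\cap R$ we obtain $mx/1\in I^sR_P$, whence $x/1=m^{-1}(mx/1)\in I^sR_P$, contradicting the choice of $P$. Therefore $\depth(R/I^{(s)})\ge 1$.

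I do not expect a genuine obstacle, the statement being folklore. The one point that needs care is the case split in part (ii): one must observe that the associated prime $P$ of $R/I$ witnessing $x\notin I^{(s)}$ is necessarily a \emph{proper} subideal of $\mathfrak{m}$ — the alternative $P=\mathfrak{m}$ being precisely what $\depth(R/I)\ge 1$ excludes — so that $\mathfrak{m}$ generates the unit ideal in $R_P$ and the containment $\mathfrak{m}x\subseteq I^sR_P$ collapses to $x/1\in I^sR_P$. The remaining work is the routine manipulation of localizations and the observation that homogeneity places all the relevant primes inside $\mathfrak{m}$.
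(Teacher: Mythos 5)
Your proof is correct. The paper handles both parts at once by quoting \cite[Lemma 2.2]{HJKN}, which describes $I^{(s)}$ and $\Ass_R(I^{(s)})$ in terms of an irredundant primary decomposition of $I^s$: namely $I^{(s)}$ is the intersection of those primary components of $I^s$ whose radical lies inside some element of $\Ass_R(I)$, and $\Ass_R(I^{(s)})$ consists of the associated primes of $I^s$ contained in an element of $\Ass_R(I)$; both claims then drop out since $\depth(R/I)=0$ (resp.\ $\ge 1$) is equivalent to $\mm\in\Ass(R/I)$ (resp.\ $\mm\notin\Ass(R/I)$). You instead argue directly from the definition $I^{(s)}=\bigcap_{P\in\Ass(R/I)}(I^sR_P\cap R)$: in (i) you reduce to the identity $I^sR_\mm\cap R=I^s$, justified because every associated prime of the homogeneous ideal $I^s$ is homogeneous and hence contained in $\mm$, so localization at $\mm$ is injective on $R/I^s$; in (ii) you kill a putative socle element $x$ of $R/I^{(s)}$ by choosing a witness $P\subsetneq\mm$ and a unit $m\in\mm\setminus P$ of $R_P$. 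This is a genuinely self-contained alternative: it avoids the structural result on primary decompositions of symbolic powers, at the cost of being slightly longer; the paper's route gives the stronger byproduct that \emph{every} associated prime of $I^{(s)}$ sits inside an associated prime of $I$, which your argument does not record but which is not needed for the statement. Both hinge on the same two facts, that associated primes of homogeneous ideals are contained in $\mm$ and that depth zero is equivalent to $\mm$ being associated, and your handling of the only delicate point (that the witness $P$ in (ii) is a proper subideal of $\mm$) is exactly right.
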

\begin{proof}
From \cite[Lemma 2.2]{HJKN}, given any irredundant primary decomposition $I^s=Q_1\cap Q_2\cap \cdots \cap Q_d$, there are equalities
\begin{align*}
I^{(s)} &= \bigcap_{\sqrt{Q_i}\subseteq P \, \text{for some $P\in \Ass_R(I)$}}Q_i,\\
\Ass_R I^{(s)}&=\{\pp \in \Ass_R I^s \mid \text{$\pp$ is contained in an element of $\Ass_R(I)$}\}. 
\end{align*}
The desired conclusions follow.
\end{proof}
Recall that a standard graded $\kk$-algebra $(R,\mm)$ is called \emph{Koszul}, if $\reg_R(R/\mm)=0$. The following statement is well-known, and can be proved by standard short exact sequence arguments. Note that the equality of depth is due to Lescot. 
\begin{lem}
\label{lem_decomposition_F}
Let $(R,\mm)$ and $(S,\nn)$ be standard graded algebras over $\kk$ such that $\mm\neq (0)$ and $\nn\neq (0)$. Let $I\subseteq \mm^2, J\subseteq \nn^2$ be homogeneous ideals. Denote $T=R\otimes_\kk S$ and $F=I+J+\mm\nn$ the fiber product of $I$ and $J$.
\begin{enumerate}[\quad \rm (i)]
 \item There is an equation $F=(I+\nn)\cap (J+\mm)$. In particular, there is an exact sequence of $T$-modules
\[
0\to \frac{T}{F} \to \frac{R}{I} \oplus \frac{S}{J} \to \kk \to 0.
\]
\item There is an equality $\depth(T/F) = \min\left\{1, \depth(R/I), \depth(S/J)\right\}$.
\item Assume additionally that $R$ and $S$ are Koszul algebras. Then there is an equality
$\reg_T F  =\max\{2, \reg_R I, \reg_S J\}$.

If moreover either $I$ or $J$ is non-zero, then the last formula reduces to
$\reg_T F=\max\{\reg_R I, \reg_S J\}$.
\end{enumerate}
\end{lem}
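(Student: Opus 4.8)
Here is how I would prove \Cref{lem_decomposition_F}, in each case reducing everything to the short exact sequence produced in (i).

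\emph{Part (i).} The plan is first to prove the ideal identity $F=(I+\nn)\cap(J+\mm)$ and then to obtain the exact sequence from the Mayer--Vietoris sequence $0\to T/(A\cap B)\to T/A\oplus T/B\to T/(A+B)\to 0$. For the identity: since $I\subseteq\mm^2\subseteq\mm$ and $J\subseteq\nn$ after extension to $T$, both $I+\nn$ and $J+\mm$ contain $I+J$, so it suffices to intersect their images in $T/(I+J)=(R/I)\otimes_\kk(S/J)$. There those images are the extensions of the graded maximal ideals of $S/J$ and of $R/I$ respectively, so by \Cref{lem_intersection} their intersection equals their product $(I+J+\mm\nn)/(I+J)$, and pulling back gives the identity. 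Taking $A=I+\nn$ and $B=J+\mm$, and using $T/(I+\nn)=R/I$, $T/(J+\mm)=S/J$, and $(I+\nn)+(J+\mm)=\mm+\nn$ (again by $I\subseteq\mm$, $J\subseteq\nn$) with quotient $\kk$, yields $0\to T/F\to R/I\oplus S/J\to\kk\to 0$. Write $\phi$ for this injection; its image is the set of pairs agreeing in degree $0$.

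\emph{Part (ii).} Set $m=\min\{\depth(R/I),\depth(S/J)\}$. Since $R/I$ is a module over $T/\nn=R$ and $S/J$ over $T/\mm=S$, one has $\depth_T(R/I\oplus S/J)=m$ and $\depth_T\kk=0$. Apply the depth lemma to the sequence of (i). If $m\ge 1$, then $\depth(T/F)\ge\min\{m,1\}=1$, while $0=\depth\kk\ge\min\{\depth(T/F)-1,m\}$ forces $\depth(T/F)\le 1$, so $\depth(T/F)=1$. If $m=0$, say $\depth(R/I)=0$, choose a nonzero homogeneous $g\in R/I$ with $\mm g=0$; its degree is $\ge 1$ (a scalar $g$ would give $\mm=I\subseteq\mm^2$, impossible by Nakayama as $\mm\neq(0)$), hence $(g,0)$ lies in the image of $\phi$, say $(g,0)=\phi(h)$ with $0\neq h$ homogeneous. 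As $\mm(g,0)=0$ and $\nn$ annihilates $R/I$, injectivity of $\phi$ gives $(\mm+\nn)h=0$, so $\mm+\nn\in\Ass_T(T/F)$ and $\depth(T/F)=0$. In both cases $\depth(T/F)=\min\{1,m\}$.

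\emph{Part (iii).} Assume $R,S$ are Koszul. By \Cref{lem_tensor} applied with second factor $\kk=S/\nn$ (resp. $R/\mm$), $\reg_T\kk=0$, $\reg_T(R/I)=\reg_R(R/I)$, and $\reg_T(S/J)=\reg_S(S/J)$. The long exact $\Tor^T(-,\kk)$ sequence of (i) gives $\reg_T(T/F)\le\max\{\reg_R(R/I),\reg_S(S/J),1\}$ and, via $\reg_T(R/I\oplus S/J)\le\max\{\reg_T(T/F),\reg_T\kk\}=\reg_T(T/F)$, also $\reg_T(T/F)\ge\max\{\reg_R(R/I),\reg_S(S/J)\}$. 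I would then add $\reg_T(T/F)\ge 1$: the degree-$2$ piece $(\mm\nn)_2=R_1\otimes_\kk S_1$ sits in the mixed summand of $T_2=R_2\oplus(R_1\otimes_\kk S_1)\oplus S_2$ and hence meets $(I+J)_2\subseteq R_2\oplus S_2$ only in $0$, so, $F$ being generated in degrees $\ge 2$, it has a minimal generator of degree $2$; thus $\reg_T F\ge 2$, and $0\to F\to T\to T/F\to 0$ gives $\reg_T F=\reg_T(T/F)+1$. Combining, $\reg_T(T/F)=\max\{1,\reg_R(R/I),\reg_S(S/J)\}$, so $\reg_T F=\max\{2,\reg_R(R/I)+1,\reg_S(S/J)+1\}=\max\{2,\reg_R I,\reg_S J\}$, using $\reg_R I=\reg_R(R/I)+1$ when $I\neq 0$ and that a zero factor contributes a term $\le 2$. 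Finally, if $I\neq 0$ then $I\subseteq\mm^2$ forces $\reg_R I\ge 2$, making the constant $2$ redundant, and symmetrically for $J$. The only genuinely non-formal input is the ideal identity in (i), which is a one-line consequence of \Cref{lem_intersection}; the remaining care-points are producing the socle element $h\in T/F$ in the zero-depth case of (ii) and checking that $\mm\nn$ forces a degree-$2$ minimal generator of $F$ — exactly what is responsible for the constant $2$ in the regularity formula.
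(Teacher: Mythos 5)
Your proof is correct and follows exactly the route the paper indicates: it establishes the identity $F=(I+\nn)\cap(J+\mm)$ via \Cref{lem_intersection} applied in $(R/I)\otimes_\kk(S/J)$, and then runs the depth lemma and the long exact sequence of $\Tor$ on the resulting Mayer--Vietoris sequence, with the two non-formal inputs (the socle element when $\min\{\depth(R/I),\depth(S/J)\}=0$, and the degree-$2$ minimal generator coming from $\mm\nn$) handled correctly. The paper itself only defers to the proof of \cite[Proposition 3.3]{NgV19b}, which is the same ``standard short exact sequence'' argument, so there is nothing to add.
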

\begin{proof}
Argue similarly to the proof of \cite[Proposition 3.3]{NgV19b}.
\end{proof}

\begin{rem}
\Cref{lem_decomposition_F}(iii) corrects two unfortunate (albeit minor) errors in \cite[Proposition 3.3]{NgV19b}: 
\begin{enumerate}
\item In the hypothesis of \emph{ibid.}, we need to assume that $\mm\neq (0)$ and $\nn\neq (0)$.
\item In the last statement of \emph{ibid.}, we need to assume that either $I$ or $J$ is non-zero.
\end{enumerate}
In fact, for (1), if $\mm=(0)$, then $R=\kk$, $I=(0)$. Hence we see that $T=S$, $F=J$, and the formula $\depth(T/F) = \min\left\{1, \depth(R/I), \depth(S/J)\right\}$
becomes
\[
\depth(S/J)=\min\left\{1, 0, \depth(S/J)\right\}=0
\]
which is incorrect if $\depth(S/J)\ge 1$. Similarly, we need $\nn\neq (0)$.

For (2), if $I=J=(0)$, and $R=\kk[x], S=\kk[y]$, then $F=(xy)$, hence
\[
\reg_T F=2 > \max\{\reg_R I, \reg_S J\}=-\infty.
\]
Thus we need the assumption  either $I$ or $J$ is non-zero.
\end{rem}

For the rest of this section, let $(R,\mm)$, $(S,\nn)$ be noetherian standard graded polynomial rings over $\kk$ of positive Krull dimensions, and $I, J$ proper homogeneous ideals of $R, S$, resp. Let $T=R\otimes_\kk S$, and $F=I+J+\mm\nn \subseteq T$ the fiber product of $I$ and $J$.

\begin{lem}
\label{lem_decompose_ordinarypow}
For every $s\ge 1$, there is an equality $F^s=I^s+J^s+\mm \nn F^{s-1}$. In particular,
\[
F^s=I^s+J^s+\mm\nn(I^{s-1}+J^{s-1})+\cdots+(\mm\nn)^s.
\]
\end{lem}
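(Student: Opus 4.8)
The plan is to prove the first identity $F^s = I^s + J^s + \mm\nn F^{s-1}$ by expanding $F^s$ into products of the three ideals $I$, $J$, $\mm\nn$ and sorting the resulting terms, and then to deduce the ``in particular'' statement by iterating it.

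First I would invoke the standard expansion of a power of a sum of ideals,
\[
F^s = (I + J + \mm\nn)^s = \sum_{a+b+c = s} I^a J^b (\mm\nn)^c ,
\]
where, as throughout the paper, all ideals are read inside $T$. The inclusion $\supseteq$ is immediate: $I,J \subseteq F$ gives $I^s, J^s \subseteq F^s$, and $\mm\nn F^{s-1} \subseteq F\cdot F^{s-1} = F^s$. For the inclusion $\subseteq$ I would check that each summand $I^a J^b(\mm\nn)^c$ with $a+b+c=s$ lies in $I^s+J^s+\mm\nn F^{s-1}$, distinguishing three cases. The summands $I^s$ (i.e. $b=c=0$) and $J^s$ are visibly there. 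If $c \ge 1$, then $I^a J^b(\mm\nn)^c = \mm\nn\cdot\bigl(I^a J^b(\mm\nn)^{c-1}\bigr)$, and the parenthesized factor is a product of $a+b+(c-1)=s-1$ ideals each contained in $F$, hence lies in $F^{s-1}$; so this summand lies in $\mm\nn F^{s-1}$. The only remaining case is $c=0$ with $a,b\ge 1$, and this is where the crucial relation $IJ \subseteq (\mm\nn)^2$ comes in, giving
\[
I^a J^b = I^{a-1}(IJ)J^{b-1} \subseteq I^{a-1}(\mm\nn)^2 J^{b-1} = \mm\nn\cdot\bigl(I^{a-1}(\mm\nn)J^{b-1}\bigr) \subseteq \mm\nn F^{s-1},
\]
since $I^{a-1}(\mm\nn)J^{b-1}$ is a product of $(a-1)+1+(b-1)=s-1$ ideals each contained in $F$.

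The relation $IJ \subseteq (\mm\nn)^2$ is the one step deserving attention, and it is precisely where the hypotheses $I\subseteq\mm^2$, $J\subseteq\nn^2$ enter: since $R$ and $S$ are built on disjoint sets of variables one has $\mm^2\nn^2 = (\mm\nn)^2$ in $T$, whence $IJ \subseteq \mm^2\cdot\nn^2 = (\mm\nn)^2$. (Equivalently, one can prove the first identity by induction on $s$: for $s \ge 2$ write $F^s = F\cdot F^{s-1}$, substitute $F^{s-1} = I^{s-1}+J^{s-1}+\mm\nn F^{s-2}$ from the induction hypothesis, distribute, and use $IJ \subseteq (\mm\nn)^2$ to push the cross-terms $JI^{s-1}$ and $IJ^{s-1}$ into $\mm\nn F^{s-1}$ — the bookkeeping is identical.)

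For the ``in particular'' statement I would simply iterate the first identity, replacing $F^{s-1}$ inside $\mm\nn F^{s-1}$ by $I^{s-1}+J^{s-1}+\mm\nn F^{s-2}$, then $F^{s-2}$ by $I^{s-2}+J^{s-2}+\mm\nn F^{s-3}$, and so on down to $F^0 = T$, which yields
\[
F^s = I^s + J^s + \mm\nn(I^{s-1}+J^{s-1}) + (\mm\nn)^2(I^{s-2}+J^{s-2}) + \cdots + (\mm\nn)^{s-1}(I+J) + (\mm\nn)^s .
\]
I do not expect a genuine obstacle here; the proof is a careful but routine manipulation of ideal products, the only substantive ingredient being the containment $IJ \subseteq (\mm\nn)^2$.
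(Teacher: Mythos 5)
Your proof is correct and follows essentially the same route as the paper: both expand the power of the sum $F=I+J+\mm\nn$ and absorb the cross terms $I^aJ^b$ (with $a,b\ge 1$) into $\mm\nn F^{s-1}$ via the containment $IJ\subseteq \mm^2\nn^2=(\mm\nn)^2$, which is exactly where the hypotheses $I\subseteq\mm^2$, $J\subseteq\nn^2$ are used. The only cosmetic difference is that the paper first writes $F^s=(I+J)^s+\mm\nn F^{s-1}$ and then treats the mixed terms of $(I+J)^s$, whereas you expand into three-fold products from the start.
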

\begin{proof}
For the first assertion, it is harmless to assume $s\ge 2$. Since $F=I+J+\mm \nn$, we get
\[
F^s=(I+J)^s+\mm \nn F^{s-1}= I^s+J^s+\mm \nn F^{s-1}+\sum_{i=1}^{s-1}I^iJ^{s-i}.
\]
For $1\le i\le s-1$,
\[
I^iJ^{s-i}=IJ I^{i-1}J^{s-i-1}\subseteq \mm^2 \nn^2 I^{i-1}J^{s-i-1}=\mm\nn(\mm\nn I^{i-1}J^{s-i-1})\subseteq \mm \nn F^{s-1}.
\]
Hence the first equality holds true. The remaining equality is an immediate consequence.
\end{proof}

We record the following formulas for later use.
\begin{lem}[H\`a--Trung--Trung {\cite[Proposition 2.9]{HTT16}}]
\label{lem_depthregIplusn_ordinary}
For all $s\ge 1$, there are equalities:
\begin{enumerate}[\quad \rm (i)]
 \item $\depth \dfrac{T}{(I+\nn)^s}=\min\limits_{i\in [1,s]} \left\{ \depth \dfrac{R}{I^i} \right\}$.
 \item $\reg \dfrac{T}{(I+\nn)^s}=\max \limits_{i\in [1,s]} \left\{ \reg \dfrac{R}{I^i}+s-i \right\}$.
\end{enumerate}
\end{lem}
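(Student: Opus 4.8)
The plan is to identify $T/(I+\nn)^s$, \emph{as a graded $R$-module}, with a finite direct sum of shifted copies of the quotients $R/I^j$ for $1\le j\le s$, and then to transfer the resulting depth and regularity back to $T$. Write $S=\kk[y_1,\dots,y_m]$ with $m=\dim S\ge 1$, so that $T=R[y_1,\dots,y_m]$ carries the $y$-grading $T=\bigoplus_{k\ge 0}T_{[k]}$, where $T_{[k]}$ is the free $R$-module on the monomials of degree $k$ in the $y_i$'s; note $T_{[k]}\cong R(-k)^{\oplus b_k}$ with $b_k=\dim_\kk S_k>0$. Since each $\nn^{s-i}=(y_1,\dots,y_m)^{s-i}$ is generated in $y$-degree $s-i$ while $I^i$ is extended from $R$, one computes, one $y$-degree at a time (using $I^{t}\subseteq I^{t-1}$ and the convention $I^{t}=R$ for $t\le 0$),
\[
\bigl[(I+\nn)^s\bigr]_{[k]}=\Bigl[\sum_{i=0}^s I^i\nn^{s-i}\Bigr]_{[k]}=\Bigl(\sum_{i\ge s-k}I^i\Bigr)T_{[k]}=I^{\max(0,\,s-k)}\,T_{[k]}.
\]
Passing to the quotient, the pieces with $k\ge s$ vanish and one obtains the graded $R$-module isomorphism
\[
\frac{T}{(I+\nn)^s}\;\cong\;\bigoplus_{k=0}^{s-1}\Bigl(\frac{R}{I^{s-k}}\Bigr)^{\oplus b_k}(-k)\;=\;\bigoplus_{j=1}^{s}\Bigl(\frac{R}{I^{j}}\Bigr)^{\oplus b_{s-j}}(-(s-j)),
\]
where every multiplicity $b_{s-j}$ is positive and every summand $R/I^{j}$ is nonzero since $I$ is proper. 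Hence $\depth_R\bigl(T/(I+\nn)^s\bigr)=\min_{1\le j\le s}\depth(R/I^j)$ and $\reg_R\bigl(T/(I+\nn)^s\bigr)=\max_{1\le j\le s}\bigl(\reg(R/I^j)+s-j\bigr)$.

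Next I would check that these $R$-theoretic invariants agree with the ones computed over $T$. Put $M=T/(I+\nn)^s$ and let $\mathfrak M=\mm T+\nn T$ denote the graded maximal ideal of $T$. From $\nn^s\subseteq(I+\nn)^s=\operatorname{Ann}_T M$ we get $\sqrt{\mathfrak M+\operatorname{Ann}_T M}=\sqrt{\mm T+\operatorname{Ann}_T M}$, and therefore $H^i_{\mathfrak M}(M)=H^i_{\mm T}(M)=H^i_{\mm}(M)$ for all $i$, the last identification being the independence of local cohomology from the base ring along $R\hookrightarrow T$. Since both $R$ and $T$ are polynomial rings, relative and absolute Castelnuovo--Mumford regularity coincide over each, so $\reg_T M=\sup\{\,i+j : H^i_{\mathfrak M}(M)_j\neq 0\,\}=\reg_R M$, and likewise $\depth_T M=\min\{\,i : H^i_{\mathfrak M}(M)\neq 0\,\}=\depth_R M$. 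Combining this with the previous paragraph yields both asserted equalities. (Alternatively, once the $R$-module decomposition is available, one could induct on $s$ via the exact sequence $0\to(I+\nn)^s/(I+\nn)^{s+1}\to T/(I+\nn)^{s+1}\to T/(I+\nn)^s\to 0$, but the direct description above is cleaner and avoids the induction.)

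The step that needs care is the $y$-graded computation: pinning down the exponent $\max(0,s-k)$, correctly discarding the zero summands for $k\ge s$, and keeping the re-indexing $j=s-k$ straight. One should also bear in mind that the displayed decomposition of $M$ holds only as $R$-modules, not as $T$-modules (multiplication by a $y_i$ moves between $y$-degrees); this is precisely why the comparison of invariants must be made through local cohomology rather than through a naive base change.
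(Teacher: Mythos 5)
Your argument is correct. The $y$-graded computation $[(I+\nn)^s]_{[k]}=I^{\max(0,s-k)}T_{[k]}$ is right (the powers $I^i$ are nested, and $\nn^{s-i}$ contains every $y$-monomial of degree $\ge s-i$), the resulting decomposition $T/(I+\nn)^s\cong\bigoplus_{j=1}^{s}(R/I^{j})^{\oplus b_{s-j}}(-(s-j))$ of graded $R$-modules is valid with all multiplicities positive because $\dim S\ge 1$ and all summands nonzero because $I$ is proper, and the transfer of depth and regularity from $R$ to $T$ via $H^i_{\mm T+\nn T}(M)=H^i_{\mm}(M)$ (using $\nn^s\subseteq\operatorname{Ann}_T M$ and independence of base) is sound.

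For comparison: the paper does not prove this statement at all --- it is quoted from H\`a--Trung--Trung \cite[Proposition 2.9]{HTT16} --- so there is no in-paper proof to match against. The closest relative is the paper's proof of the symbolic analogue, \Cref{lem_depthregIplusn_symbolic}, which uses exactly your idea (expand $(I+\nn)^{(s)}$ via \Cref{lem_binomialformula} and read off a direct sum of shifted copies of $R/I^{(i)}$ over $R$), but gets to the one-variable case $S=\kk[y]$ by induction on $\dim S$ rather than by working with the full $y$-grading at once as you do. Your version is arguably cleaner in that it avoids the induction and makes the multiplicities $b_k=\dim_\kk S_k$ explicit; the only step it adds is the local-cohomology comparison between $R$- and $T$-invariants, which the one-variable inductive route also implicitly needs and which you handle correctly.
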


What we will also need is the following analogous formulas for symbolic powers.
\begin{lem}
\label{lem_depthregIplusn_symbolic}
For all $s\ge 1$, there are equalities:
\begin{enumerate}[\quad \rm (i)]
 \item $\depth \dfrac{T}{(I+\nn)^{(s)}}=\min\limits_{i\in [1,s]} \left\{ \depth \dfrac{R}{I^{(i)}} \right\}$.
 \item $\reg \dfrac{T}{(I+\nn)^{(s)}}=\max \limits_{i\in [1,s]} \left\{ \reg \dfrac{R}{I^{(i)}}+s-i \right\}$.
\end{enumerate}
\end{lem}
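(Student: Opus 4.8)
The plan is to reduce the statement to its already-known ordinary-power counterpart \Cref{lem_depthregIplusn_ordinary}. The key point is that $(I+\nn)^{(s)}$ has exactly the same shape as $(I+\nn)^s$. Indeed, $\nn$ is the graded maximal ideal of the polynomial ring $S$, so $\depth(S/\nn)=0$ and hence $\nn^{(t)}=\nn^t$ for every $t\ge 0$ by \Cref{lem_positivedepth_symbpow}(i). Plugging this into the binomial formula for symbolic powers of sums, \Cref{lem_binomialformula} applied to $I\subseteq R$ and $\nn\subseteq S$ (the case $I=(0)$ being trivial and handled directly), gives for all $s\ge 1$
\[
(I+\nn)^{(s)}=\sum_{i=0}^{s}I^{(i)}\nn^{(s-i)}=\sum_{i=0}^{s}I^{(i)}\nn^{s-i}=I^{(s)}+\nn\,(I+\nn)^{(s-1)},
\]
which is formally identical to the expansion $(I+\nn)^s=\sum_{i=0}^{s}I^i\nn^{s-i}=I^s+\nn(I+\nn)^{s-1}$ that underlies \Cref{lem_depthregIplusn_ordinary}.

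From this point I would simply re-run the proof of \Cref{lem_depthregIplusn_ordinary} (i.e. of \cite[Proposition~2.9]{HTT16}) with $I^i$ replaced by $I^{(i)}$ throughout. That proof uses only the following structural features of the family $\{I^i\}_{i\ge 0}$: $I^0=R$; the chain is descending, $I^{i+1}\subseteq I^i$; it is multiplicative, $I^aI^b\subseteq I^{a+b}$; and $\nn$ is the graded maximal ideal of a polynomial ring. The symbolic family $\{I^{(i)}\}_{i\ge 0}$ has all of these: $I^{(0)}=R$ and the descending chain are immediate from the definition, and $I^{(a)}I^{(b)}\subseteq I^{(a+b)}$ holds because for each $P\in\Ass(R/I)$ one has $I^{(a)}I^{(b)}R_P\subseteq I^aI^bR_P\subseteq I^{a+b}R_P$, and one intersects over all such $P$. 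Hence the argument yields (i) and (ii) verbatim.

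Alternatively, for a self-contained proof, one inducts on $s$. The case $s=1$ is clear, since $T/(I+\nn)\cong R/I$ as $T$-modules with $\nn$ acting as zero, whence $\depth_T(R/I)=\depth(R/I)$ and $\reg_T(R/I)=\reg(R/I)$ by \Cref{lem_tensor} (using $\reg_S\kk=0$). For the inductive step one combines the additive decomposition above with the identity $I^{(s)}\cap\nn(I+\nn)^{(s-1)}=I^{(s)}\nn$ — valid because $(M_1\otimes_\kk S)\cap(M_2\otimes_\kk N)=(M_1\cap M_2)\otimes_\kk N$ for $\kk$-subspaces and $\sum_{i=0}^{s-1}I^{(i)}\nn^{s-i}\subseteq R\otimes_\kk\nn$ — to obtain the Mayer--Vietoris sequence
\[
0\to \frac{T}{I^{(s)}\nn}\to \frac{T}{I^{(s)}}\oplus\frac{T}{\nn(I+\nn)^{(s-1)}}\to \frac{T}{(I+\nn)^{(s)}}\to 0 .
\]
One then feeds $T/I^{(s)}=(R/I^{(s)})\otimes_\kk S$ into \Cref{lem_tensor} and the additivity of depth for $\otimes_\kk$, handles $T/(I^{(s)}\nn)$ and $T/(\nn(I+\nn)^{(s-1)})$ via \Cref{lem_regmM}(i), the short exact sequences relating these to $T/\nn\cong R$, and the inductive hypothesis applied to $(I+\nn)^{(s-1)}$, and finally reads off $\depth$ and $\reg$ of the rightmost cokernel from the resulting long exact sequences in local cohomology.

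The main obstacle is bookkeeping rather than a new idea: one must check that no step of the ordinary-power argument secretly relies on the relation $I^s=I\cdot I^{s-1}$, which \emph{fails} for symbolic powers (only $I\cdot I^{(s-1)}\subseteq I^{(s)}$ survives), so that the decomposition $(I+\nn)^{(s)}=I^{(s)}+\nn(I+\nn)^{(s-1)}$ really is the correct substitute. In the self-contained route the delicate computation is that of the depth and regularity of the ``mixed'' module $T/(\nn(I+\nn)^{(s-1)})$, equivalently of $\nn/\nn(I+\nn)^{(s-1)}$, which one controls using the sequence $0\to\nn\to T\to R\to 0$ together with the Koszul complex on the variables of $S$.
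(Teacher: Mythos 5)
Your proposal is correct, and its governing idea --- the binomial expansion $(I+\nn)^{(s)}=\sum_{i=0}^{s}I^{(i)}\nn^{s-i}$ obtained from \Cref{lem_binomialformula} together with $\nn^{(t)}=\nn^{t}$, which puts the symbolic power in exactly the same shape as the ordinary power --- is also the starting point of the paper's proof. The execution differs: rather than re-running the proof of \Cref{lem_depthregIplusn_ordinary} with $I^{i}$ replaced by $I^{(i)}$ (a meta-argument that forces you to actually inspect the proof in \cite{HTT16}), or setting up your Mayer--Vietoris induction on $s$, the paper inducts on $\dim S$ to reduce to $S=\kk[y]$, where the expansion gives $(I+(y))^{(s)}=\bigoplus_{j\ge 0}I^{(\max(0,s-j))}y^{j}$ and hence a direct sum decomposition of $R$-modules $T/(I+(y))^{(s)}\cong\bigoplus_{i=1}^{s}(R/I^{(i)})y^{s-i}$; since $y^{s}$ annihilates this module, its depth and regularity are read off summand by summand, with no exact-sequence bookkeeping at all. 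Both of your routes do work (the descending-chain property $I^{(i+1)}\subseteq I^{(i)}$ and $I^{(0)}=R$ are indeed the only structural inputs beyond the expansion), but two details deserve repair in the self-contained one: the justification of $I^{(s)}T\cap\nn(I+\nn)^{(s-1)}=I^{(s)}\nn$ is imprecise as written, since $\sum_{i=0}^{s-1}I^{(i)}\nn^{s-i}$ is not of the form $M_{2}\otimes_{\kk}N$ --- argue instead that $I^{(s)}\nn\subseteq\nn(I+\nn)^{(s-1)}\subseteq R\otimes_{\kk}\nn$ and $I^{(s)}T\cap(R\otimes_{\kk}\nn)=I^{(s)}\otimes_{\kk}\nn$, comparing graded pieces in the $S$-degree --- and the analysis of $T/(\nn(I+\nn)^{(s-1)})$ is exactly the kind of computation that the paper's one-variable direct-sum reduction renders unnecessary.
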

\begin{proof}

(i) By induction on $n=\dim S$, we reduce to the case $n=1$, namely $S=\kk[y]$, $\nn=(y)$, and $T=R[y]$. We have to show that
\[
\depth \frac{T}{(I+(y))^{(s)}} =\min_{i\in [1,s]} \left\{ \depth \frac{R}{I^{(i)}} \right\}.
\]
We argue similarly to \cite[Proof of Theorem 5.2]{CH+}. Thanks to \Cref{lem_binomialformula}, 
\[
(I+(y))^{(s)}=I^{(s)}+I^{(s-1)}y+\cdots+Iy^{s-1}+(y^s).
\]
Hence we have a direct sum decompositions of finitely generated $R$-modules
\[
\frac{T}{(I+(y))^{(s)}} \cong \bigoplus_{i=1}^s \frac{R}{I^{(i)}}y^{s-i}.
\]
The desired conclusion immediately follows from this decomposition.

(ii) The proof is similar to part (i), and is left to the interested reader.
\end{proof}


\section{Decompositions as intersections and sums}
\label{sect_decomposition}

From now on, we keep the following notations.
\begin{notn}
\label{notn_IandJ}
Let $\kk$ be a field of arbitrary characteristic. 
\begin{itemize}
 \item Let $R$ and $S$ be noetherian standard graded polynomial rings over $\kk$, such that $\dim R\ge 1$ and $\dim S\ge 1$.
 \item Let $T=R\otimes_{\kk}S$ be the tensor product over $\kk$ of $R$ and $S$. 
 \item The homogeneous maximal ideals of $R$ and $S$ are $\mathfrak{m}$ and $\mathfrak{n}$, respectively. 
 \item Let $I\subseteq \mm^2, J\subseteq \nn^2$ be proper homogeneous ideals of $R$ and $S$, resp. 
 \item Let $F=I+J+\mm \nn \subseteq T$ be the fiber product of $I$ and $J$.
\end{itemize}
\end{notn}
The main results of this section are the following two decomposition formulas for the symbolic power of fiber products.
\begin{lem} 
\label{lem_decomposition}
Keep using \Cref{notn_IandJ}.
\begin{enumerate}[\quad \rm (1)]
 \item Assume that $\min\left\{\depth(R/I),\depth(S/J)\right \}\ge 1$.  Then for every integer $s\geq 1$, we have an equality 
$$
F^{(s)}=(I+\nn)^{(s)}\cap(J+\mm)^{(s)}.
$$
\item Assume that $\min\left\{\depth(R/I),\depth(S/J)\right \}=0$. Then for every integer $s\ge 1$, there is an equality between the $s$-th symbolic and ordinary powers
\[
F^{(s)}=F^s.
\]
\end{enumerate}
\end{lem}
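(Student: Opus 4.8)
\textbf{Part (2)} is immediate: the hypothesis $\min\{\depth(R/I),\depth(S/J)\}=0$ forces $\depth(T/F)=0$ by \Cref{lem_decomposition_F}(ii), and then \Cref{lem_positivedepth_symbpow}(i), applied to the proper homogeneous ideal $F$ of the standard graded $\kk$-algebra $T$, gives $F^{(s)}=F^s$ for every $s\ge 1$.

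For \textbf{Part (1)}, the strategy is to compute $\Ass_T(T/F)$ explicitly and then reduce the symbolic power of $F$ to those of $I+\nn$ and $J+\mm$ one localization at a time. First I would record that, since $T/(I+\nn)\cong (R/I)\otimes_\kk\kk$ and $\pp+\nn$ is prime in $T$ for every $\pp\in\Spec R$ (as $T/(\pp+\nn)\cong R/\pp$ is a domain), \Cref{lem_Ass_tensor}(i) yields
\[
\Ass_T(T/(I+\nn))=\{\pp+\nn : \pp\in\Ass_R(R/I)\},\qquad \Ass_T(T/(J+\mm))=\{\mm+\qq : \qq\in\Ass_S(S/J)\}.
\]
The assumption $\depth(R/I),\depth(S/J)\ge 1$ means precisely $\mm\notin\Ass_R(R/I)$ and $\nn\notin\Ass_S(S/J)$, so the maximal ideal $\mm+\nn$ of $T$ belongs to neither of these two sets.

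Next I would pin down $\Ass_T(T/F)$. The injection $T/F\hookrightarrow T/(I+\nn)\oplus T/(J+\mm)$ (the middle term of the exact sequence in \Cref{lem_decomposition_F}(i), where $R/I\cong T/(I+\nn)$ and $S/J\cong T/(J+\mm)$ as $T$-modules) gives $\Ass_T(T/F)\subseteq\Ass_T(T/(I+\nn))\cup\Ass_T(T/(J+\mm))$. For the reverse inclusion I would localize the exact sequence $0\to T/F\to T/(I+\nn)\oplus T/(J+\mm)\to\kk\to 0$ at a prime $P=\pp+\nn$ with $\pp\in\Ass_R(R/I)$: since $\mm\not\subseteq P$, the ideals $\mm+\nn$ and $J+\mm$ are not contained in $P$, so $\kk_P=0$ and $(T/(J+\mm))_P=0$, whence $(T/F)_P\cong(T/(I+\nn))_P$ and therefore $P\in\Ass_T(T/F)$; the case $P=\mm+\qq$ is symmetric. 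Thus $\Ass_T(T/F)=\{\pp+\nn:\pp\in\Ass_R(R/I)\}\cup\{\mm+\qq:\qq\in\Ass_S(S/J)\}$. Finally, for $P=\pp+\nn$ with $\pp\ne\mm$ one has $\mm T_P=T_P$, and together with $J\subseteq\nn^2\subseteq\nn$ this gives $FT_P=(I+J+\mm\nn)T_P=(I+\nn)T_P$, hence $F^sT_P=(I+\nn)^sT_P$; symmetrically $F^sT_P=(J+\mm)^sT_P$ for $P=\mm+\qq$ with $\qq\ne\nn$. Substituting these identities into the defining intersection $F^{(s)}=\bigcap_{P\in\Ass_T(T/F)}(F^sT_P\cap T)$ over the primes just listed, and regrouping the two families using the descriptions of $\Ass_T(T/(I+\nn))$ and $\Ass_T(T/(J+\mm))$ above, yields exactly $(I+\nn)^{(s)}\cap(J+\mm)^{(s)}$.

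The step I expect to be the crux is the description of $\Ass_T(T/F)$ — in particular, making precise that the positivity of $\depth(R/I)$ and $\depth(S/J)$ is exactly what keeps $\mm+\nn$ (the associated prime that would otherwise be contributed by the length-one cokernel $\kk$) out of $\Ass_T(T/F)$; once this is established, the remaining localization computations are routine.
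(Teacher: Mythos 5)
Your proposal is correct and follows essentially the same route as the paper: the paper likewise establishes $\Ass_T F=\{\pp_1+\nn:\pp_1\in\Ass_R(I)\}\cup\{\pp_2+\mm:\pp_2\in\Ass_S(J)\}$ (its \Cref{lem_AssMin_F}, proved via the same exact sequence and localization), notes that positivity of the depths excludes $\mm$ and $\nn$ from the respective associated primes so that $F^sT_P=(I+\nn)^sT_P$ or $(J+\mm)^sT_P$ at each such $P$, and regroups the defining intersection; Part (2) is handled identically via \Cref{lem_positivedepth_symbpow}.
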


\begin{lem}
\label{lem_decomposition_as_sum}
Assume that $\min\left\{\depth(R/I),\depth(S/J)\right \}\ge 1$. For all $s\ge 1$, there is an equality
\[
F^{(s)}= \sum_{i=0}^{s}\sum_{t=0}^{s}(I^{(i)}\cap \mm^{s-t})(J^{(t)}\cap \nn^{s-i}).
\]
\end{lem}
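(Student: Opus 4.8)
My plan is to reduce, via \Cref{lem_decomposition}(1), to understanding the intersection of two ``binomial-type'' ideals, and then to verify the asserted equality one bidegree at a time. Write $R=\kk[\mathbf{x}]$, $S=\kk[\mathbf{y}]$, so that $T=\kk[\mathbf{x},\mathbf{y}]$ carries the bigrading by $(\deg_{\mathbf{x}},\deg_{\mathbf{y}})$. Since each power $\mm^k$ (resp.\ $\nn^k$) is primary to the irrelevant maximal ideal, it equals its own $k$-th symbolic power; hence \Cref{lem_binomialformula} (together with the trivial case in which $I$ or $J$ is zero, where $(0)^{(k)}=(0)$) gives
\[
(I+\nn)^{(s)}=\sum_{i=0}^s I^{(i)}\nn^{\,s-i},\qquad (J+\mm)^{(s)}=\sum_{t=0}^s J^{(t)}\mm^{\,s-t},
\]
with the usual conventions $I^{(0)}=R$, $J^{(0)}=S$. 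Since associated primes of a homogeneous ideal are homogeneous, each $I^{(i)}$ is a homogeneous ideal of $R$, and $\nn^{s-i}$ is generated by monomials in $\mathbf{y}$; thus each summand $I^{(i)}\nn^{s-i}$ is bigraded, so $(I+\nn)^{(s)}$ is bigraded, likewise $(J+\mm)^{(s)}$, and hence $F^{(s)}=(I+\nn)^{(s)}\cap(J+\mm)^{(s)}$ is bigraded.

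The inclusion $\supseteq$ I would dispatch at once: for all $i,t\in[0,s]$ one has
\[
(I^{(i)}\cap\mm^{s-t})(J^{(t)}\cap\nn^{s-i})\subseteq I^{(i)}\nn^{s-i}\subseteq (I+\nn)^{(s)},
\]
and symmetrically this product lies in $\mm^{s-t}J^{(t)}\subseteq (J+\mm)^{(s)}$, hence in $F^{(s)}$.

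For $\subseteq$ I would fix a nonzero bihomogeneous $f\in F^{(s)}$ of bidegree $(a,b)$ and compute the relevant bigraded pieces. Using that $(\nn^k)_b=S_b$ iff $b\ge k$, that $(\mm^k)_a=R_a$ iff $a\ge k$, and the chain $I^{(0)}\supseteq I^{(1)}\supseteq\cdots$ (and likewise for $J$), one finds
\[
\big((I+\nn)^{(s)}\big)_{(a,b)}=(I^{(i_0)})_a\otimes_\kk S_b,\qquad \big((J+\mm)^{(s)}\big)_{(a,b)}=R_a\otimes_\kk(J^{(t_0)})_b,
\]
where $i_0=\max(s-b,0)\in[0,s]$ and $t_0=\max(s-a,0)\in[0,s]$. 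By the elementary identity $(V'\otimes W)\cap(V\otimes W')=V'\otimes W'$ for subspaces of a tensor product over a field, $f\in(I^{(i_0)})_a\otimes_\kk(J^{(t_0)})_b$, so $f=\sum_k g_kh_k$ with $g_k\in(I^{(i_0)})_a$ and $h_k\in(J^{(t_0)})_b$. Finally $s-t_0=\min(a,s)\le a=\deg g_k$ forces $g_k\in\mm^{s-t_0}$, so $g_k\in I^{(i_0)}\cap\mm^{s-t_0}$, and symmetrically $h_k\in J^{(t_0)}\cap\nn^{s-i_0}$; hence $f$ lies in the $(i_0,t_0)$ summand of the right-hand side, completing the inclusion.

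The only genuine content is the bidegree bookkeeping in the last step — correctly identifying the exponents $i_0,t_0$ and, in particular, handling the boundary cases in which $s-b$ or $s-a$ is nonpositive, where the corresponding power $\mm^{s-t_0}$ or $\nn^{s-i_0}$ degenerates to the whole ring (there the constraint ``$g_k$ has degree $\ge s-t_0$'' is vacuous and must not be mis-stated). Once those cases are settled, everything else is formal.
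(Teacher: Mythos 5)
Your proof is correct. It follows the same skeleton as the paper's — reduce via \Cref{lem_decomposition}(1) and the binomial expansion of \Cref{lem_binomialformula} to computing $\bigl(\sum_i I^{(i)}\nn^{s-i}\bigr)\cap\bigl(\sum_t J^{(t)}\mm^{s-t}\bigr)$ — but the way you compute that intersection differs from the paper's. The paper isolates this step as a general statement about filtrations (\Cref{lem_intersect_formula}) and proves it by a three-case analysis on the bidegree $(a,b)$, repeatedly invoking $I\cap J=IJ$ for ideals in disjoint variables. You instead identify each bigraded piece exactly, $\bigl(\sum_i I^{(i)}\nn^{s-i}\bigr)_{(a,b)}=(I^{(i_0)})_a\otimes_\kk S_b$ with $i_0=\max(s-b,0)$, and then apply the subspace identity $(V'\otimes W)\cap(V\otimes W')=V'\otimes W'$; the three cases of the paper collapse into the single uniform choice of $(i_0,t_0)$, and the membership $g_k\in\mm^{s-t_0}$, $h_k\in\nn^{s-i_0}$ is forced by degree alone. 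Your argument uses only that $I^{(\bullet)}$, $J^{(\bullet)}$ are descending chains of homogeneous ideals with $I^{(0)}=R$, $J^{(0)}=S$, so it in fact reproves the paper's \Cref{lem_intersect_formula} in the generality needed for its later reuse with ordinary powers (\Cref{lem_decomposition_G}). What the paper's formulation buys is an explicitly quotable general lemma; what yours buys is a shorter, case-free verification. The boundary bookkeeping you flag (degenerate powers $\mm^0=R$, $\nn^0=S$ when $a\ge s$ or $b\ge s$) is handled correctly.
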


We begin with the description of the associated (minimal) primes of $F$ in terms of the associated (respectively, minimal) primes of the components $I$ and $J$.
\begin{lem}
\label{lem_AssMin_F}
The following statements hold.
\begin{enumerate}[\quad \rm (i)]
\item 
$\Ass_T F = \{\pp_1+\nn: \pp_1 \in \Ass_R(I)\} ~ \bigcup ~ \{\pp_2+\mm: \pp_2 \in \Ass_S(J)\}.$
\item 
$\Min_T F \subseteq  \{\pp_1+\nn: \pp_1 \in \Min_R(I)\} ~ \bigcup ~ \{\pp_2+\mm: \pp_2 \in \Min_S(J)\}.$
The equality happens if both $\dim(R/I)$ and $\dim(S/J)$ are positive.
\item Assume that $\dim(R/I)=0$. Then there is an equality
$$\Min_T F = \{\pp_2+\mm: \pp_2 \in \Min_S(J)\}.$$
\end{enumerate}
\end{lem}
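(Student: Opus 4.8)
The plan is to prove part (i) first and then extract (ii) and (iii) by purely order-theoretic bookkeeping, using that $\Min_T F$ is the set of inclusion-minimal members of $\Ass_T(T/F)$.

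For (i) I would begin by recording, via \Cref{lem_Ass_tensor}(i) applied with $N=\kk=S/\nn$, that a finitely generated $R$-module $M$, viewed as a $T$-module through $T\twoheadrightarrow T/\nn T=R$, satisfies $\Ass_T M=\{\pp+\nn:\pp\in\Ass_R M\}$; here one uses that $T/(\pp+\nn)\cong R/\pp$ is a domain, so $\pp+\nn$ is prime and is the unique minimal prime over itself, and the symmetric statement holds for $S$-modules. In particular $\Ass_T(S/J)=\{\pp_2+\mm:\pp_2\in\Ass_S(J)\}$. Next, since $I\subseteq\mm$ and $J\subseteq\nn$ we have $F\subseteq J+\mm$ with $T/(J+\mm)\cong S/J$, and from $F=(I+\nn)\cap(J+\mm)$ (\Cref{lem_decomposition_F}(i)) the second isomorphism theorem gives $(J+\mm)/F\cong((I+\nn)+(J+\mm))/(I+\nn)=(\mm+\nn)/(I+\nn)\cong\mm/I$, the latter regarded as a $T$-module via $T\twoheadrightarrow R$. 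This produces a short exact sequence of $T$-modules
\[
0\to\mm/I\to T/F\to S/J\to 0,
\]
and a symmetric one $0\to\nn/J\to T/F\to R/I\to 0$. The first gives $\Ass_T(\mm/I)\subseteq\Ass_T(T/F)\subseteq\Ass_T(\mm/I)\cup\Ass_T(S/J)$ and the second gives $\Ass_T(\nn/J)\subseteq\Ass_T(T/F)$; combining the two, (i) will follow once we know $\Ass_R(\mm/I)=\Ass_R(R/I)$ and $\Ass_S(\nn/J)=\Ass_S(S/J)$.

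This last identification is the one step that is not a formality, and it is precisely where the hypothesis $I\subseteq\mm^2$ enters. From $0\to\mm/I\to R/I\to\kk\to 0$ one gets $\Ass_R(\mm/I)\subseteq\Ass_R(R/I)\subseteq\Ass_R(\mm/I)\cup\{\mm\}$, so it suffices to check that $\mm\in\Ass_R(R/I)$ forces $\mm\in\Ass_R(\mm/I)$. If $\mm\in\Ass_R(R/I)$, pick a homogeneous $x\in R\setminus I$ with $\mm x\subseteq I$; if $x$ had degree $0$ then $\mm=\mm x\subseteq I\subseteq\mm^2$, impossible since $\dim R\ge1$, so $x\in\mm$ and the nonzero class of $x$ in $\mm/I$ is killed by $\mm$, giving $\mm\in\Ass_R(\mm/I)$. (If $I=0$ this is vacuous and $\mm/I=\mm$ shares the associated prime $(0)$ with $R$.)

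For (ii) and (iii) I would then read off the minimal elements of $\mathcal A\cup\mathcal B$, where $\mathcal A=\{\pp_1+\nn:\pp_1\in\Ass_R I\}$ and $\mathcal B=\{\pp_2+\mm:\pp_2\in\Ass_S J\}$, using two containment facts obtained by contraction: $\pp_1+\nn\subseteq\pp_1'+\nn\iff\pp_1\subseteq\pp_1'$ (contract to $R$), and $\pp_2+\mm\subseteq\pp_1+\nn\iff\pp_1=\mm$ (contract to $R$, using that $\mm$ is maximal). The first fact shows a minimal member of $\mathcal A$ comes from $\Min_R I$, which yields the general inclusion in (ii). If $\dim R/I>0$ and $\dim S/J>0$ then no $\pp_1\in\Min_R I$ equals $\mm$ and no $\pp_2\in\Min_S J$ equals $\nn$, so by the second fact no element of $\mathcal B$ lies below one from $\mathcal A$ and conversely; hence every $\pp_1+\nn$ with $\pp_1\in\Min_R I$ and every $\pp_2+\mm$ with $\pp_2\in\Min_S J$ is minimal, proving the equality in (ii). For (iii), when $\dim R/I=0$ we have $\Ass_R I=\{\mm\}$, so $\mathcal A=\{\mm+\nn\}$; as $\mm+\nn$ is the maximal ideal of $T$ it lies above every element of $\mathcal B$, so it is minimal in $\mathcal A\cup\mathcal B$ exactly when $\mathcal B=\{\mm+\nn\}$, i.e. when $\dim S/J=0$ (in which case $\{\pp_2+\mm:\pp_2\in\Min_S J\}=\{\mm+\nn\}$ as well), and otherwise $\Min_T F$ equals the set of minimal members of $\mathcal B$, namely $\{\pp_2+\mm:\pp_2\in\Min_S J\}$; either way the formula in (iii) holds. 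Once the two containment facts and the associated-prime identification of (i) are in place, this case analysis is routine.
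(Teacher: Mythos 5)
Your argument is correct, and it reaches the statement by a genuinely different route in the two places where the paper does real work. For the hard containment $\supseteq$ in (i), the paper argues prime by prime: for $P=\pp_1+\nn$ with $\pp_1\neq\mm$ it localizes, notes $F_P=(I+\nn)_P$, and applies \Cref{lem_Ass_tensor}, while the case $\pp_1=\mm$ is handled via Lescot's depth formula from \Cref{lem_decomposition_F}. You instead exhibit the explicit submodule $(J+\mm)/F\cong\mm/I$ of $T/F$ (and its mirror $\nn/J$), so that everything reduces to the identity $\Ass_R(\mm/I)=\Ass_R(R/I)$; your homogeneous-socle argument for that identity is exactly where $I\subseteq\mm^2$ and $\dim R\ge 1$ enter, and it is sound, with the resulting sandwich $\Ass_T(\mm/I)\cup\Ass_T(\nn/J)\subseteq\Ass_T(T/F)\subseteq\Ass_T(\mm/I)\cup\Ass_T(S/J)$ closing up as you say. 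For (ii) and (iii) the paper again works pointwise (localizing to get $\dim(T/F)_P=0$ for the equality case of (ii), and using that $I+\nn$ is $(\mm+\nn)$-primary to get $\Min_T F=\Min_T(J+\mm)$ in (iii)), whereas you deduce both parts from (i) by poset bookkeeping on $\Ass_T F$ via the two contraction facts; the one assertion there that deserves a word is that $\dim(R/I)>0$ rules out $\mm\in\Min_R(I)$, which holds because minimal primes of the homogeneous ideal $I$ are homogeneous and hence contained in $\mm$, so $\mm\in\Min_R(I)$ would force $\Min_R(I)=\{\mm\}$ and $\dim(R/I)=0$. Your approach buys self-containedness --- it avoids localization and the depth formula entirely and makes the role of the hypothesis $I\subseteq\mm^2$ transparent --- while the paper's pointwise verification is shorter once \Cref{lem_decomposition_F} is in hand.
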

\begin{proof}

(i) From the exact sequence of $T$-modules
\[
0\to \frac{T}{F} \to \frac{T}{I+\nn} \oplus \frac{T}{J+\mm} \to \kk \to 0,
\]
and \Cref{lem_Ass_tensor}, we deduce
\begin{align*}
\Ass_T F & \subseteq \Ass_T \frac{T}{I+\nn} \cup \Ass_T \frac{T}{J+\mm} \\
        &= \{\pp_1+\nn: \pp_1 \in \Ass_R(I)\} ~ \bigcup ~ \{\pp_2+\mm: \pp_2 \in \Ass_S(J)\}.
\end{align*}

It remains to prove the reverse containment. Take $\pp_1\in \Ass_R I$ and let $P=\pp_1+\nn$. If $\pp_1=\mm$, then $\depth(R/I)=0$. Thanks to \Cref{lem_decomposition_F},
\[
\depth(T/F)=\min\{1,\depth(R/I),\depth(S/J)\}=0.
\]
In particular, $P=\mm+\nn \in \Ass_T F$.

If $\pp_1 \neq \mm$, we have $F_P=(I+J+\nn)_P=(I+\nn)_P$. By Lemma \ref{lem_Ass_tensor} and the fact that $T/(I+\nn) \cong (R/I)\otimes_\kk (S/\nn)$, we get $P=\pp_1+\nn \in \Ass_T(I+\nn)$. Hence
\[
\depth (T/F)_P=\depth (T/(I+\nn))_P=0.
\]
Therefore $P \in \Ass_T F$, as desired.

(ii) As for part (i), the first containment follows from the exact sequence
\[
0\to \frac{T}{F} \to \frac{T}{I+\nn} \oplus \frac{T}{J+\mm} \to \kk \to 0,
\]
and \Cref{lem_Ass_tensor}.

Now assume that $\min\{\dim(R/I),\dim(S/J)\}\ge 1$. Take $P=\pp_1+\nn$, where $\pp_1\in \Min_R(I)$. By Lemma \ref{lem_Ass_tensor}, $P\in \Min_T(I+\nn)$. Since $\dim(R/I)>0$, $\mm \not\subseteq P$, so 
$$
\dim (T/F)_P=\dim (T/(I+\nn))_P=0.
$$
Therefore, $P\in \Min_T F$, as claimed. Similar arguments work when $P=\pp_2+\mm$, where $\pp_2\in \Min_S(J)$.

(iii) Note that $F=(I+\nn)\cap (J+\mm)$. Since $\dim(R/I)=0$, $(I+\nn)$ is $(\mm+\nn)$-primary. Hence a prime ideal contains $F$ if and only it contains $J+\mm$. Consequently, $\Min_T F=\Min_T (J+\mm)$. The desired conclusion then follows by applying Lemma \ref{lem_Ass_tensor}.
\end{proof}
Now we are ready to present the
\begin{proof}[Proof of \Cref{lem_decomposition}]
(1) We wish to show that for all $s\ge 1$, the following holds
$$
F^{(s)}=(I+\nn)^{(s)}\cap(J+\mm)^{(s)}.
$$
The case $s=1$ is a consequence of \Cref{lem_decomposition_F}: We have
\[
F^{(1)}=F=I+J+\mm \nn=(I+\nn) \cap (J+\mm) = (I+\nn)^{(1)}\cap(J+\mm)^{(1)}.
\]
Since $\depth(R/I), \depth(S/J)\ge 1$, we deduce from \Cref{lem_decomposition_F} that $\depth(T/F)\ge 1$. We have
\begin{align*}
F^{(s)} &=\bigcap_{P\in \Ass_T(F)} (F^sT_P \cap T)\\
        &= \bigcap_{\pp_1\in \Ass_R(I)} (F^sT_{\pp_1+\nn} \cap T) \quad \bigcap \quad  \bigcap_{\pp_2\in \Ass_S(J)} (F^sT_{\pp_2+\mm} \cap T).
\end{align*}
It remains to show that
\begin{align*}
 \bigcap_{\pp_1\in \Ass_R(I)} (F^sT_{\pp_1+\nn} \cap T) &= (I+\nn)^{(s)},\\
 \bigcap_{\pp_2\in \Ass_S(J)} (F^sT_{\pp_2+\mm} \cap T) &=(J+\mm)^{(s)}.
\end{align*}
We prove the first equality; the second one is similar. For any $\pp_1\in \Ass_R(I)$, as $\depth(R/I)>0$, $\pp_1\neq \mm$. Hence $\mm \not\subseteq \pp_1+\nn$, so we get the second equality in the chain
\[
F^sT_{\pp_1+\nn}=(I+J+\mm \nn)^s_{\pp_1+\nn}=(I+J+\nn)^s_{\pp_1+\nn}=(I+\nn)^s_{\pp_1+\nn}.
\]
Thanks to \Cref{lem_Ass_tensor}, $\Ass_T(I+\nn)=\{\pp_1+\nn \mid \pp_1 \in \Ass_R(I)\}$. Hence
\begin{align*}
 \bigcap_{\pp_1\in \Ass_R(I)} (F^sT_{\pp_1+\nn} \cap T) &= \bigcap_{\pp_1\in \Ass_R(I)} ((I+\nn)^sT_{\pp_1+\nn} \cap T) \\
                                                        &=\bigcap_{P\in \Ass_T(I+\nn )} ((I+\nn)^sT_P \cap T) =(I+\nn)^{(s)},
\end{align*}
as desired. This concludes the proof of part (1).

(2) Without loss of generality, assume that $\depth(R/I)=0$. Then thanks to \Cref{lem_AssMin_F}, $\mm+\nn \in \Ass_T(F)$, namely $\depth(T/F)=0$. Thanks to \Cref{lem_positivedepth_symbpow}, the last equation implies $F^{(s)}=F^s$ for all $s$, as desired.
\end{proof}
\begin{ex}
In general, the decomposition formula 
$$F^{(s)}=(I+\nn)^{(s)}\cap(J+\mm)^{(s)}
$$ 
need not hold without the assumption $\min \{\depth(R/I),\depth(S/J) \}\ge 1$. For example, let $R=\kk[x], I=(x^2), S=\kk[y], J=(y^2)$. Then $F=(x^2,y^2,xy)$. Hence
\[
x^2y\in (I+(y))^{(2)} \cap (J+(x))^{(2)}=(x^2,y)^2 \cap (x,y^2)^2.
\]
On the other hand, $x^2y\notin F^{(2)}=F^2=(x,y)^4$, by degree reason. Thus $F^{(2)}\subsetneq (I+\nn)^{(2)}\cap(J+\mm)^{(2)}$ in this case.

\end{ex}

A \emph{filtration} of ideals in $R$ is a descending chain $K_\bullet=(K_i)_{i\ge 0}$ consisting of ideals of $R$ satisfying
\[
K_0 \supseteq K_1 \supseteq K_2 \supseteq \cdots 
\]
For example, if $I$ is a homogeneous ideal of $R$, then the ordinary powers $I^\bullet$ and the symbolic powers $I^{(\bullet)}$ are filtration of homogeneous ideals of $R$. \Cref{lem_decomposition_as_sum} is a consequence of the following more general statement.
\begin{lem}
\label{lem_intersect_formula}
Let $K_\bullet$ be a filtration of homogeneous ideals in $R$ and $L_\bullet$ be a filtration of homogeneous ideals in $S$. Then for all $s\ge 1$, there is an equality
\begin{equation}
\label{eq_decompose_intersect}
\left(\sum_{i=0}^s K_i\nn^{s-i}\right) \bigcap \left(\sum_{t=0}^s L_t\mm^{s-t}\right) =\sum_{i=0}^s\sum_{t=0}^s (K_i\cap \mm^{s-t})(L_t\cap \nn^{s-i}).
\end{equation}
\end{lem}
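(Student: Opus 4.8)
The plan is to use the natural $\Z^2$-grading on $T=R\otimes_\kk S$, where $T_{(a,b)}=R_a\otimes_\kk S_b$. With respect to this bigrading, each of the ideals appearing in \eqref{eq_decompose_intersect}---the $K_i$, the $L_t$, the powers $\mm^j$ and $\nn^j$, and their extensions to $T$---is bihomogeneous, and the two sides of \eqref{eq_decompose_intersect} are built from these by sums, products and intersections, hence are themselves bihomogeneous ideals of $T$. It therefore suffices to prove \eqref{eq_decompose_intersect} in each bidegree $(a,b)\in\Z^2_{\ge 0}$.

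The bidegree-wise computation rests on three elementary facts. First, since $R$ and $S$ are standard graded, $(\mm^j)_a=R_a$ for $a\ge j$ and $(\mm^j)_a=0$ for $a<j$ (with $\mm^0=R$), and likewise for $S$. Second, for a homogeneous ideal $P$ of $R$ and a homogeneous ideal $Q$ of $S$, the product of their extensions to $T$ equals $P\otimes_\kk Q$ as a $\kk$-subspace of $T$, so its bidegree-$(a,b)$ part is $P_a\otimes_\kk Q_b$. Third, because $K_\bullet$ and $L_\bullet$ are filtrations, $\sum_{i=\ell}^{s}K_i=K_\ell$ and $\sum_{t=\ell}^{s}L_t=L_\ell$ for $0\le\ell\le s$. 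Combining these, in bidegree $(a,b)$ the first factor on the left of \eqref{eq_decompose_intersect} becomes $\sum_{i\ge\max\{0,s-b\}}(K_i)_a\otimes_\kk S_b$, which by the filtration property equals $(K_{\max\{0,s-b\}})_a\otimes_\kk S_b$; symmetrically the second factor becomes $R_a\otimes_\kk(L_{\max\{0,s-a\}})_b$. On the right of \eqref{eq_decompose_intersect}, $(K_i\cap\mm^{s-t})_a=(K_i)_a$ precisely when $a\ge s-t$ and is $0$ otherwise, while $(L_t\cap\nn^{s-i})_b=(L_t)_b$ precisely when $b\ge s-i$ and is $0$ otherwise; hence the double sum collapses, again by the filtration property, to $(K_{\max\{0,s-b\}})_a\otimes_\kk(L_{\max\{0,s-a\}})_b$.

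It remains to compute the bidegree-$(a,b)$ part of the left-hand side of \eqref{eq_decompose_intersect}, which by the above is the intersection, inside $R_a\otimes_\kk S_b$, of $(K_{\max\{0,s-b\}})_a\otimes_\kk S_b$ with $R_a\otimes_\kk(L_{\max\{0,s-a\}})_b$. For this I invoke the standard fact that, given $\kk$-vector spaces $V_1,V_2$ and subspaces $U\subseteq V_1$, $W\subseteq V_2$, one has $(U\otimes_\kk V_2)\cap(V_1\otimes_\kk W)=U\otimes_\kk W$ inside $V_1\otimes_\kk V_2$; this is immediate upon choosing bases of $U$ and $W$ and extending them to bases of $V_1$ and $V_2$. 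The outcome is $(K_{\max\{0,s-b\}})_a\otimes_\kk(L_{\max\{0,s-a\}})_b$, which matches the right-hand side, finishing the proof.

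I do not expect a real obstacle here: the argument is essentially organized bookkeeping once the bigrading is fixed. The points needing genuine care are the boundary cases---permitting $K_0=R$ or $L_0=S$, the vanishing $(\mm^j)_a=0$ for $a<j$, and keeping the index ranges $0\le i,t\le s$ consistent so the filtrations collapse to the correct terms---together with recording the (trivial but necessary) linear-algebra identity for intersections of tensor-product subspaces.
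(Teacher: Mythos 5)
Your argument is correct. It shares with the paper's proof the key idea of exploiting the standard $\Z^2$-grading on $T$ (under which both sides of \eqref{eq_decompose_intersect} are bihomogeneous), but the execution is genuinely different. The paper notes that the right-hand side is trivially contained in the left, and then proves the reverse containment by taking a single bigraded element $x$ of degree $(a,b)$ in the left-hand side and running a case analysis ($a,b\ge s$; $a<s\le b$; $a,b<s$) to place $x$ inside one specific summand $(K_i\cap\mm^{s-t})(L_t\cap\nn^{s-i})$, using the identity $PQ=P\cap Q=P\otimes_\kk Q$ for ideals in disjoint variables at each step. You instead compute the full bidegree-$(a,b)$ component of \emph{both} sides and show each collapses, via the filtration property, to the single space $(K_{\max\{0,s-b\}})_a\otimes_\kk(L_{\max\{0,s-a\}})_b$; the only extra input is the elementary identity $(U\otimes_\kk V_2)\cap(V_1\otimes_\kk W)=U\otimes_\kk W$. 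Your version is more systematic and yields slightly more information (an explicit description of every bigraded piece, which in particular identifies exactly which summand $G_{i,t}$ dominates in each bidegree --- matching the paper's three cases), at the cost of some index bookkeeping; the paper's element-chasing version avoids the explicit tensor-product linear algebra but must verify the easy containment separately. One small point worth making explicit in your write-up: when you collapse the double sum on the right-hand side, you are using that a sum $\sum_{(i,t)}U_i\otimes_\kk W_t$ over a \emph{rectangular} index set equals $\bigl(\sum_i U_i\bigr)\otimes_\kk\bigl(\sum_t W_t\bigr)$; this is true here because the set of contributing pairs $(i,t)$ is a product of intervals, but it would fail for a general index set.
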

\begin{proof}

For each $0\le i,t \le s$, denote $G_{i,t}=(K_i \cap \mm^{s-t})(L_t\cap \nn^{s-i})$, so the right-hand side of \eqref{eq_decompose_intersect} is nothing but $\sum\limits_{i,t=0}^s G_{i,t}$. Clearly the left-hand side contains the right-hand side.

It remains to prove the reverse containment. For this, consider the $\Z^2$-grading of $T$ given by $\deg x_i=(1,0), \deg y_j=(0,1)$ for $1\le i\le m, 1\le j\le n$. Then the ideals of the filtration $K_\bullet$ and $L_\bullet$ are graded in the $\Z$-gradings of $R$ and $S$, they are also bigraded. Thus both sides of  \eqref{eq_decompose_intersect} are bigraded. Thus it remains to show that for any bigraded element $x$ of the left-hand side of \eqref{eq_decompose_intersect} also belongs to the right-hand side.

 Since $K_\bullet$ is a filtration, $x\in \sum_{i=0}^s K_i\nn^{s-i}\subseteq K_0T$. Similarly, $x\in L_0T$.

Let $\deg x=(a,b)$, where $a,b\ge 0$. By symmetry, it is harmless to assume that $a\le b$. 

Consider the following cases.

\textbf{Case 1:} $a\ge s$. In this case $x\in \mm^s \cap \nn^s$. Hence per flatness and \Cref{lem_intersection}, we get the first and second equality in the chain
\begin{align*}
x\in K_0T\cap \mm^sT\cap L_0T\cap \nn^sT & =(K_0\cap \mm^s)T \cap (L_0\cap \nn^s)T\\
                                         & =(K_0\cap \mm^s)(L_0\cap \nn^s)=G_{0,0}. 
\end{align*}

\textbf{Case 2:} $a\le s-1$, $b\ge s$. We claim that $x\in \mm^aL_{s-a}$. Indeed, since $L_\bullet$ is a filtration,
\[
x\in \sum_{t=0}^s L_t\mm^{s-t} \subseteq L_{s-a}+\mm^{a+1}.
\]
Since $x, L_{s-a}, \mm^{a+1}$ are bigraded, this yields an equation $x=x'+x''y$, where $x'\in L_{s-a}, x''\in \mm^{a+1}, y\in T$ are bigraded elements. As $\deg x=(a,b)$, this implies $x''y=0, x=x'\in L_{s-a}$. Since $\deg x=(a,b)$, clearly $x\in \mm^a$, and  hence
\[
x\in \mm^a \cap L_{s-a}=\mm^aL_{s-a}.
\]
thanks to \Cref{lem_intersection}. This is the desired claim.

As $b\ge s$, $x\in \nn^s$, and as above, $x\in K_0T$. Hence thanks to \Cref{lem_intersection},
\[
x\in K_0T\cap \mm^aT\cap (L_{s-a}\cap \nn^s)T=(K_0\cap \mm^a)(L_{s-a}\cap \nn^s)= G_{0,s-a}.
\]

\textbf{Case 3:} $a\le s-1$, $b\le s-1$. Arguing as in Case 2, we get $x\in \mm^aL_{s-a}$ and similarly, that $x\in \nn^bK_{s-b}$. In particular, using \Cref{lem_intersection} again,
\[
x\in (K_{s-b}\cap \mm^a) \cap (L_{s-a}\cap \nn^b)=(K_{s-b}\cap \mm^a) (L_{s-a}\cap \nn^b)=G_{s-b,s-a}.
\]
In any case, $x$ belongs to the right-hand side of \eqref{eq_decompose_intersect}. The proof is concluded.
\end{proof}

It remains to present the
\begin{proof}[Proof of \Cref{lem_decomposition_as_sum}]
As $\min\left\{\depth(R/I),\depth(S/J)\right \}>0$, \Cref{lem_decomposition} yields
\begin{equation*}
F^{(s)}= (I+\nn)^{(s)} \cap (J+\mm)^{(s)}=\left(\sum_{i=0}^s I^{(i)}\nn^{s-i}\right)\bigcap  \left(\sum_{t=0}^s J^{(t)}\mm^{s-t}\right). 
\end{equation*}
The second equality follows from \Cref{lem_binomialformula} and the fact that symbolic and ordinary powers coincide for each of $\mm$ and $\nn$.

Applying \Cref{lem_intersect_formula} for the chains of homogeneous ideals $I^{(\bullet)}$ and $J^{(\bullet)}$, we get the desired equality.
\end{proof}


\section{Depth and regularity of symbolic powers}
\label{sect_symbpow_positivedepth}

Keep using \Cref{notn_IandJ}. Denote $\pp:=\mm T+\nn T$ the graded maximal ideal of $T$. The main result of this section determines depth and regularity of symbolic powers of fiber products, in the case both $\depth(R/I)$ and $\depth(S/J)$ are positive. 

\vskip6ex

\begin{thm} 
\label{thm_depth_reg}
Assume that $\min\left\{\depth(R/I),\depth(S/J)\right \}\ge 1$. Then for every integer $s\geq 1$, there are equalities
\begin{enumerate}[\quad \rm (i)]
\item $\depth(T/F^{(s)})=1$, and
\item $\reg(F^{(s)})=\max\limits_{i\in [1,s]} \left \{2s,\reg(I^{(i)})+s-i, \reg(J^{(i)})+s-i \right \}$.
\end{enumerate}
\end{thm}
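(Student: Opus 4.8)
The plan is to exploit the intersection decomposition $F^{(s)}=(I+\nn)^{(s)}\cap(J+\mm)^{(s)}$ of \Cref{lem_decomposition}(1). Set $A=(I+\nn)^{(s)}$, $B=(J+\mm)^{(s)}$, $G=A+B$, and let $\pp=\mm T+\nn T$ denote the graded maximal ideal of $T$. The decomposition produces a short exact sequence of graded $T$-modules
\[
0\longrightarrow T/F^{(s)}\longrightarrow (T/A)\oplus (T/B)\longrightarrow T/G\longrightarrow 0,
\]
and I would read off both (i) and (ii) from the associated long exact sequence in local cohomology $H^\bullet_\pp(-)$. Three ingredients feed in. First, $\depth(R/I)\ge1$ and $\depth(S/J)\ge1$ imply, via \Cref{lem_positivedepth_symbpow}(ii), that $\depth(R/I^{(i)})\ge1$ and $\depth(S/J^{(i)})\ge1$ for all $i\ge1$; hence \Cref{lem_depthregIplusn_symbolic}(i) gives $\depth(T/A)\ge1$ and $\depth(T/B)\ge1$, i.e. $H^0_\pp(T/A)=H^0_\pp(T/B)=0$. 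Second, \Cref{lem_depthregIplusn_symbolic}(ii) evaluates $\reg(T/A)$ and $\reg(T/B)$. Third --- the crux --- I claim $T/G$ has finite length with largest nonvanishing degree exactly $2s-2$; equivalently, $G_j=T_j$ for all $j\ge2s-1$ while $G_{2s-2}\subsetneq T_{2s-2}$.

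Granting the third claim, the rest is a routine chase. Since $\sqrt A=\sqrt I+\nn$ and $\sqrt B=\sqrt J+\mm$, we have $\sqrt G=\pp$, so $T/G$ is annihilated by a power of $\pp$ and $H^i_\pp(T/G)=0$ for $i\ge1$. Feeding the vanishings of $H^0_\pp(T/A)$, $H^0_\pp(T/B)$, and $H^i_\pp(T/G)$ ($i\ge1$) into the long exact sequence gives $H^0_\pp(T/F^{(s)})=0$, a short exact sequence $0\to T/G\to H^1_\pp(T/F^{(s)})\to H^1_\pp(T/A)\oplus H^1_\pp(T/B)\to0$, and isomorphisms $H^i_\pp(T/F^{(s)})\cong H^i_\pp(T/A)\oplus H^i_\pp(T/B)$ for $i\ge2$. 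As $T/G\ne0$, the first two facts yield $\depth(T/F^{(s)})=1$, which is (i). Comparing internal degrees across these sequences gives
\[
\reg(T/F^{(s)})=\max\{(2s-2)+1,\ \reg(T/A),\ \reg(T/B)\};
\]
since $\reg(T/F^{(s)})\ge2s-1\ge1$, the usual comparison of regularities in $0\to F^{(s)}\to T\to T/F^{(s)}\to0$ (with $\reg T=0$) upgrades this to $\reg(F^{(s)})=\reg(T/F^{(s)})+1=\max\{2s,\ \reg(T/A)+1,\ \reg(T/B)+1\}$. Substituting \Cref{lem_depthregIplusn_symbolic}(ii) and the identity $\reg(R/I^{(i)})+1=\reg(I^{(i)})$, valid when $I\ne0$, gives (ii); when $I=0$ one has $A=\nn^s$ and $\reg(T/A)+1=s\le2s$ gets absorbed, consistent with reading $\reg(I^{(i)})=-\infty$ in the claimed formula, and symmetrically for $J$.

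It remains to establish the third claim. The bound $G_j=T_j$ for $j\ge2s-1$ is easy: $\nn^s\subseteq(I+\nn)^s\subseteq A$ and $\mm^s\subseteq(J+\mm)^s\subseteq B$, so $\mm^sT+\nn^sT\subseteq G$, and every monomial of degree $\ge2s-1$ has $x$-degree $\ge s$ or $y$-degree $\ge s$, hence lies in $\mm^sT+\nn^sT$. For the strict inequality $G_{2s-2}\subsetneq T_{2s-2}$, I equip $T$ with the $\Z^2$-grading given by $\deg x_i=(1,0)$ and $\deg y_j=(0,1)$; with respect to it $A$, $B$, $G$ are homogeneous, because each symbolic power $I^{(i)}$, $J^{(t)}$ and each power $\mm^a$, $\nn^b$ is. Writing $A=\sum_{i=0}^sI^{(i)}\nn^{s-i}$ (by \Cref{lem_binomialformula} and the coincidence of symbolic and ordinary powers of $\nn$) and noting that only the terms $i\ge1$ contribute in bidegree $(s-1,s-1)$, a direct computation identifies the bidegree-$(s-1,s-1)$ strand of $A$ with $\big(\sum_{i\ge1}(I^{(i)})_{s-1}\big)\otimes_\kk S_{s-1}=I_{s-1}\otimes_\kk S_{s-1}$, using the chain $I^{(1)}\supseteq I^{(2)}\supseteq\cdots$; symmetrically, that of $B$ is $R_{s-1}\otimes_\kk J_{s-1}$. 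Hence $T_{(s-1,s-1)}/G_{(s-1,s-1)}\cong(R/I)_{s-1}\otimes_\kk(S/J)_{s-1}$. Since $\depth(R/I),\depth(S/J)\ge1$ force $\sqrt I\ne\mm$ and $\sqrt J\ne\nn$, there exist linear forms $\ell\in R_1\setminus\sqrt I$ and $\ell'\in S_1\setminus\sqrt J$; then $\ell^{s-1}\notin I$ and $(\ell')^{s-1}\notin J$, so the class of $\ell^{s-1}(\ell')^{s-1}$ maps to the nonzero pure tensor $\overline{\ell^{s-1}}\otimes\overline{(\ell')^{s-1}}$ in $(R/I)_{s-1}\otimes_\kk(S/J)_{s-1}$, establishing the strict containment. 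I expect the bidegree analysis of $G$ --- together with keeping every step independent of $\chara\kk$ --- to be the main obstacle; everything else is formal homological algebra.
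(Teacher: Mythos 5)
Your proof is correct, and its skeleton coincides with the paper's: both start from the decomposition $F^{(s)}=(I+\nn)^{(s)}\cap(J+\mm)^{(s)}$ of \Cref{lem_decomposition}, feed the resulting Mayer--Vietoris sequence $0\to T/F^{(s)}\to T/A\oplus T/B\to T/(A+B)\to 0$ into local cohomology, and evaluate depth and regularity of $T/A$ and $T/B$ via \Cref{lem_positivedepth_symbpow} and \Cref{lem_depthregIplusn_symbolic}; part (i) and the upper bound $\reg\big(T/(A+B)\big)\le 2s-2$ are handled identically. The one genuine divergence is how the term $2s$ is certified to belong to the maximum. The paper proves that $F^{(s)}$ itself has a minimal generator of degree $2s$ (via the sum decomposition of \Cref{lem_decomposition_as_sum} and the Nakayama argument of \Cref{lem_mingen_deg2s}), which forces $\reg (T/F^{(s)})\ge 2s-1$ and lets one replace $\reg\big(T/(A+B)\big)+1$ by $2s-1$ in the maximum. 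You instead pin down the top degree of the artinian quotient exactly, by computing its bidegree-$(s-1,s-1)$ strand: only the summands $I^{(i)}\nn^{s-i}$ with $i\ge 1$ contribute there, and they telescope to $I^{(1)}=I$, giving $A_{(s-1,s-1)}=I_{s-1}\otimes_\kk S_{s-1}$ and symmetrically for $B$, whence $\big(T/(A+B)\big)_{(s-1,s-1)}\cong(R/I)_{s-1}\otimes_\kk(S/J)_{s-1}\neq 0$ and $\reg\big(T/(A+B)\big)=2s-2$ on the nose. Both routes are characteristic-free and complete; yours is arguably more economical for this particular theorem, since it bypasses \Cref{lem_intersect_formula}, \Cref{lem_decomposition_as_sum} and \Cref{lem_mingen_deg2s} entirely, while the paper's route yields the slightly stronger byproduct that $F^{(s)}$ has a minimal generator in degree $2s$, and its intersection-as-sum formula is reused later for the ordinary powers.
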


The main ingredients in the proof are Lemmas \ref{lem_positivedepth_symbpow}, \ref{lem_depthregIplusn_symbolic}, and the following 
\begin{lem}
\label{lem_mingen_deg2s}
Let $K_\bullet, L_\bullet$ be filtration of homogeneous ideals of $R,S$, respectively. For each $s\ge 1$, denote
\[
W_s=\left(\sum_{i=0}^s K_i\nn^{s-i}\right) \bigcap \left(\sum_{t=0}^s L_t\mm^{s-t}\right).
\]
Assume that the following conditions are simultaneously satisfied: 
\begin{enumerate}[\quad \rm (1)]
 \item $\mm \subseteq K_0, \nn\subseteq L_0$; and,
 \item $\min\left\{\dim(R/K_1),\dim(S/L_1)\right\}>0$.
\end{enumerate}
Then for all $s\ge 1$, $\mm^s\nn^s\not\subseteq \pp W_s$. In particular, $W_s$ has a minimal homogeneous generator of degree $2s$.
\end{lem}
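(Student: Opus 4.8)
The plan is to work with the fine $\Z^2$-grading on $T$ in which $\deg x_i=(1,0)$ and $\deg y_j=(0,1)$. Every ideal $K_i$ of $R$ and $L_t$ of $S$ (and their extensions to $T$), as well as $\pp=\mm T+\nn T$, is bigraded, hence so are $W_s$ and $\pp W_s$. I will reduce the whole statement to the single bidegree $(s,s)$: it is enough to show that $(\pp W_s)_{(s,s)}$ is a \emph{proper} subspace of $(W_s)_{(s,s)}$. Indeed, condition (1) gives $\mm^s\subseteq K_0$ and $\nn^s\subseteq L_0$, so $\mm^s\nn^s\subseteq K_0\nn^s\cap L_0\mm^s\subseteq W_s$; and since $(\mm^s\nn^s)_{(s,s)}=(\mm^s)_s\otimes_\kk(\nn^s)_s=R_s\otimes_\kk S_s$, this identifies $(W_s)_{(s,s)}=R_s\otimes_\kk S_s$. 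Thus a proper containment $(\pp W_s)_{(s,s)}\subsetneq R_s\otimes_\kk S_s$ forces $\mm^s\nn^s\not\subseteq\pp W_s$, and any bihomogeneous element of $(W_s)_{(s,s)}\setminus(\pp W_s)_{(s,s)}$ is then a minimal homogeneous generator of $W_s$ of degree $2s$.

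The heart of the argument is the bound on $(\pp W_s)_{(s,s)}=(\mm W_s)_{(s,s)}+(\nn W_s)_{(s,s)}$. For the first summand I would use only the inclusion $W_s\subseteq\sum_{t=0}^s L_t\mm^{s-t}$. Counting bidegrees, the summand $L_t\mm^{s-t}$ contributes in bidegree $(s-a,s)$ precisely when $t\ge a$ (so that $\mm^{s-t}$ can supply $x$-degree $s-a$), and then the descending filtration $L_a\supseteq L_{a+1}\supseteq\cdots$ collapses the resulting sum to its extreme term, giving $\bigl(\sum_t L_t\mm^{s-t}\bigr)_{(s-a,s)}=R_{s-a}\otimes_\kk(L_a)_s$; hence $(W_s)_{(s-a,s)}\subseteq R_{s-a}\otimes_\kk(L_a)_s$. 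Since $\mm W_s$ is spanned by products $gw$ with $g\in R$ homogeneous of degree $a\ge 1$ and $w\in W_s$ bihomogeneous, and since $R_aR_{s-a}\subseteq R_s$ and $L_a\subseteq L_1$, this yields $(\mm W_s)_{(s,s)}\subseteq R_s\otimes_\kk(L_1)_s$. By the symmetric computation (using $W_s\subseteq\sum_i K_i\nn^{s-i}$), $(\nn W_s)_{(s,s)}\subseteq(K_1)_s\otimes_\kk S_s$, so that
\[
(\pp W_s)_{(s,s)}\subseteq R_s\otimes_\kk(L_1)_s+(K_1)_s\otimes_\kk S_s .
\]

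Now condition (2) finishes it. If $\mm^s\subseteq K_1$ for some $s\ge 1$, then $R/K_1$ is a quotient of the Artinian ring $R/\mm^s$, contradicting $\dim(R/K_1)>0$; hence $(K_1)_s\subsetneq R_s$ for all $s\ge 1$, and likewise $(L_1)_s\subsetneq S_s$. I would then invoke the elementary linear-algebra fact that for finite-dimensional $\kk$-vector spaces with proper subspaces $U'\subsetneq U$ and $V'\subsetneq V$ one has $U\otimes_\kk V'+U'\otimes_\kk V\subsetneq U\otimes_\kk V$ (pair against $\varphi\otimes\psi$, where $\varphi\in U^{\ast}$, $\psi\in V^{\ast}$ are nonzero and vanish on $U'$, $V'$ respectively). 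Applied with $U=R_s$, $V=S_s$, $U'=(K_1)_s$, $V'=(L_1)_s$, this gives $(\pp W_s)_{(s,s)}\subsetneq R_s\otimes_\kk S_s=(W_s)_{(s,s)}$, which is what was needed.

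I expect the only delicate point to be the bidegree bookkeeping for $(W_s)_{(s-a,s)}$: one must identify exactly which summands $L_t\mm^{s-t}$ are active in a given bidegree, apply the product identity $(K_iT)(\nn^{s-i}T)=K_i\otimes_\kk\nn^{s-i}$ (and its analogue for $L_t\mm^{s-t}$) so that the relevant bidegree components split as genuine tensor products over $\kk$, and exploit the filtration inclusions to contract each sum to a single term. The remaining steps are formal.
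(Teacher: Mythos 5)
Your argument is correct, and it reaches the conclusion by a genuinely different route from the paper. The paper's proof first invokes the sum decomposition $W_s=\sum_{i,t}(K_i\cap \mm^{s-t})(L_t\cap \nn^{s-i})$ of \Cref{lem_intersect_formula}, peels off the summand $\mm^s\nn^s$, and applies Nakayama's lemma to reduce the assumed containment $\mm^s\nn^s\subseteq \pp W_s$ to $\mm^s\nn^s\subseteq K_1+L_1$; a bigraded analysis then forces $\mm^s\nn^s=(K_1\cap\mm^s)\nn^s+(L_1\cap\nn^s)\mm^s$, and the contradiction comes from the vanishing of $\frac{\mm^s}{K_1\cap\mm^s}\otimes_\kk\frac{\nn^s}{L_1\cap\nn^s}$. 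You instead isolate the single bidegree $(s,s)$, bound $(\pp W_s)_{(s,s)}$ using only the two containments $W_s\subseteq\sum_i K_i\nn^{s-i}$ and $W_s\subseteq\sum_t L_t\mm^{s-t}$ together with the filtration property (your bookkeeping $(W_s)_{(s-a,s)}\subseteq R_{s-a}\otimes_\kk (L_a)_s$ is exactly right: only the summands with $t\ge a$ contribute, and the descending chain collapses the sum to its extreme term), and finish with the observation that $R_s\otimes_\kk(L_1)_s+(K_1)_s\otimes_\kk S_s$ is a proper subspace of $R_s\otimes_\kk S_s$ — which is equivalent to the paper's tensor-product vanishing criterion, since the quotient is $(R_s/(K_1)_s)\otimes_\kk(S_s/(L_1)_s)$. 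What your version buys is independence from \Cref{lem_intersect_formula} and from Nakayama: the entire proof becomes an elementary bidegree computation. The paper's version, on the other hand, gets the decomposition lemma for free because it is needed elsewhere (notably for \Cref{lem_decomposition_as_sum}), and its Nakayama step packages the reduction to degree considerations more briskly. Both proofs use condition (1) only to place $\mm^s\nn^s$ inside $W_s$ and condition (2) only to guarantee $(K_1)_s\subsetneq R_s$ and $(L_1)_s\subsetneq S_s$, so the hypotheses are used identically.
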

\begin{proof}
By \Cref{lem_intersect_formula}, there are equalities
\[
W_s=\sum_{0\le i,t\le s}(K_i\cap \mm^{s-t})(L_t \cap \nn^{s-i})=\mm^s\nn^s+\mathop{\sum_{0\le i,t\le s}}_{(i,t)\neq (0,0)} (K_i\cap \mm^{s-t})(L_t\cap \nn^{s-i}).
\]
The second equality holds since $K_0\supseteq \mm \supseteq \mm^s$ and $L_0\supseteq \nn \supseteq \nn^s$.

If the first assertion is true, then some minimal generator of $\mm^s\nn^s$ is a minimal generator of $W_s$, namely the latter has a minimal homogeneous generator of degree $2s$. Thus it remains to prove the first assertion.

Assume the contrary, that $\mm^s\nn^s\subseteq \pp W_s$. Then together with Nakayama's lemma, the last display implies that
\[
\mm^s\nn^s\subseteq \mathop{\sum_{0\le i,t\le s}}_{(i,t)\neq (0,0)} (K_i\cap \mm^{s-t})(L_t \cap \nn^{s-i}) \subseteq K_1+L_1.
\]
With respect to the standard bigrading of $T=R\otimes_\kk S$, both sides of the containment $\mm^s\nn^s\subseteq K_1+L_1$ are bigraded. Take any bigraded element $x\in \mm^s\nn^s$, and minimal homogeneous generators $f_1,\ldots,f_k$ of $K_1$ and $g_1,\ldots,g_l$ of $L_1$. Then $\deg x=(a,b)$, where $a,b\ge s$, $\deg f_i=(a_i,0)$, $\deg g_j=(0,b_j)$, where $a_1,\ldots,a_k,b_1,\ldots,b_l\ge 0$. The fact that $x\in K_1+L_1$ implies the existence of bigraded elements $u_1,\ldots,u_k,$  $v_1,\ldots,v_l\in T$,  such that $\deg u_i=(a-a_i,b), \deg v_j=(a,b-b_j)$ and
\[
x=u_1f_1+\cdots+u_kf_k+v_1g_1+\cdots+v_lg_l.
\]
Since $f_i\in K_1, g_j\in L_1, a,b\ge s$, the last equation shows that $x\in (K_1\cap \mm^s)\nn^s+(L_1\cap \nn^s)\mm^s$. Therefore
\[
\mm^s\nn^s \subseteq (K_1\cap \mm^s)\nn^s+(L_1\cap \nn^s)\mm^s \subseteq \mm^s\nn^s
\]
so equalities hold from left to right. Now
\[
\frac{\mm^s}{K_1\cap \mm^s}\otimes_\kk \frac{\nn^s}{L_1\cap \nn^s}= \frac{\mm^s\nn^s}{(K_1\cap \mm^s)\nn^s+(L_1\cap \nn^s)\mm^s}=0,
\]
so either $\mm^s\subseteq K_1$ or $\nn^s\subseteq L_1$. This contradicts the hypothesis that $\dim(R/K_1)$, $\dim(S/L_1)>0$. So the first assertion holds true and the proof is concluded.
\end{proof}

\begin{proof}[Proof of \Cref{thm_depth_reg}]
Set $I':=I+\mathfrak{n}$ and $J':=J+\mathfrak{m}$. 

(i) Using Lemma \ref{lem_decomposition}, we obtain the following short exact sequence
\begin{equation}
\label{eq_exactseq_Fs}
0 \longrightarrow T/F^{(s)}\longrightarrow T/I'^{(s)}\oplus T/J'^{(s)}\longrightarrow T/(I'^{(s)}+J'^{(s)})\longrightarrow 0.
\end{equation}
We prove the following

\textbf{Claim}: The following hold true.
\begin{enumerate}
 \item $\dfrac{T}{I'^{(s)}+J'^{(s)}}$ is artinian, hence has depth 0,
 \item $\min\left\{\depth T/I'^{(s)}, \depth T/J'^{(s)} \right\}\ge 1$.
\end{enumerate}
Together with the exact sequence \eqref{eq_exactseq_Fs} and the depth lemma, this claim implies the desired equation $\depth(T/F^{(s)})=1$.

\emph{Proof of the Claim:}

(1) Per \Cref{lem_binomialformula}, 
\[
I'^{(s)} =\sum_{i=0}^s I^{(i)}\nn^{s-i} \supseteq \nn^s,
\]
hence $\mm^s+\nn^s \subseteq I'^{(s)}+J'^{(s)}$. In particular, $\dfrac{T}{I'^{(s)}+J'^{(s)}}$ is Artinian, and thus has depth 0.

(2) Thanks to \Cref{lem_depthregIplusn_symbolic}, we obtain the second equality in the following chain
\[
\depth \frac{T}{I'^{(s)}}= \depth \frac{T}{(I+\nn)^{(s)}}=\min_{i\in [1,s]} \left\{ \depth \frac{R}{I^{(i)}} \right\} \ge 1.
\]
The inequality follows from the hypothesis $\depth(R/I)>0$ and \Cref{lem_positivedepth_symbpow}. Similarly $\depth T/J'^{(s)} \ge 1$, finishing the proof of the claim and that of part (i).

(ii) If $I=(0)$ and $J=(0)$ then $F^s=\mm^s\nn^s$ has regularity $2s$ thanks to Lemma \ref{lem_tensor}. Without loss of generality, we may suppose that $I\neq (0)$. We must prove that
$$\reg (F^{(s)})=\max_{i,j \in [1,s]} \{2s,\reg(I^{(i)})+s-i, {\rm reg}(J^{(j)})+s-j\}.$$
As mentioned in the proof of (i), $T/(I'^{(s)}+J'^{(s)})$ is an Artinian ring. Hence, by \cite[Theorem 18.4]{Pe11}, the regularity of $T/(I'^{(s)}+J'^{(s)})$ is the maximum degree of a monomial in $T$ which does not belong to $I'^{(s)}+J'^{(s)}$. For any monomial $v\in T$, with ${\rm deg}(v)\geq 2s-1$, we have $v\in \mathfrak{m}^s+\mathfrak{n}^s\subseteq I'^{(s)}+J'^{(s)}.$ Therefore,
\[
{\rm reg}\big(T/(I'^{(s)}+J'^{(s)}\big)\leq 2s-2.
\]

On the other hand, $F^{(2s)}$ has a minimal generator of degree $2s$ by Lemma \ref{lem_mingen_deg2s}. Hence
\begin{equation}
\label{eq_compare_reg}
\reg T/F^{(s)} \ge 2s-1 > \reg T/(I'^{(s)}+J'^{(s)}).
\end{equation}
The exact sequence \eqref{eq_exactseq_Fs} implies the first equality in the following chain
\begin{align*}
\reg T/F^{(s)} &= \max \left\{\reg T/(I'^{(s)}+J'^{(s)})+1, \reg (T/I'^{(s)}), \reg (S/J'^{(s)}) \right\}\\
               &= \max \left \{2s-1, \reg (T/I'^{(s)}), \reg (S/J'^{(s)}) \right \}\\
               &= \max\limits_{i\in [1,s]} \left \{2s-1, \reg (R/I^{(i)})+s-i, \reg (S/J^{(i)})+s-i \right \}
\end{align*}
The second equality holds by using \eqref{eq_compare_reg}; the third one follows from \Cref{lem_depthregIplusn_symbolic}(ii). From the last chain, we get the desired equality, and conclude the proof.
\end{proof}

The following consequence of Theorem \ref{thm_depth_reg} recovers \cite[Theorem 3.2]{KKS} and \cite[Corollary 4.12]{OR19}, which require that both $I$ and $J$ are squarefree monomial ideals.
\begin{cor}
\label{cor_reg_Fs_unmixedcase}
Assume that $\min\left\{\depth(R/I),\depth(S/J)\right \}\ge 1$. Assume moreover that $I$ is a non-zero monomial ideal of $R$ that is unmixed, namely $\Ass_R(I)=\Min_R(I)$. Then for all $s\ge 1$, there is an equality
$$
\reg (F^{(s)})=\max \limits_{i\in [1,s]} \left\{\reg(I^{(i)})+s-i, \reg(J^{(i)})+s-i \right\}.
$$
\end{cor}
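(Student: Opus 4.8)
The goal is to deduce Corollary~\ref{cor_reg_Fs_unmixedcase} from Theorem~\ref{thm_depth_reg}. By the theorem, we already know
\[
\reg(F^{(s)})=\max_{i\in[1,s]}\left\{2s,\ \reg(I^{(i)})+s-i,\ \reg(J^{(i)})+s-i\right\},
\]
so the only thing left to prove is that the term $2s$ is redundant, i.e. that $\reg(I^{(s)})+s-s=\reg(I^{(s)})\ge 2s$ for every $s\ge 1$ (the case $i=s$ suffices; any $i$ would do, but $i=s$ is the cleanest). Since $I\subseteq\mm^2$ is a non-zero homogeneous ideal, its generators have degree $\ge 2$, so $d(I)\ge 2$ and more generally $d(I^s)\ge 2s$; the plan is to leverage the unmixedness hypothesis to bound $\reg(I^{(s)})$ from below by the top degree of a minimal generator of $I^s$, which is $\ge 2s$.

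First I would record that $\depth(R/I)\ge 1$ together with $\Ass_R(I)=\Min_R(I)$ forces $I$ to be unmixed of positive height, and in particular $\mm\notin\Ass_R(I)$. Then I would invoke Lemma~\ref{lem_symbolic_powers}: taking an irredundant primary decomposition $I=Q_1\cap\cdots\cap Q_d$ (all $Q_k$ being $P_k$-primary with $P_k\in\Min_R(I)$), we get $I^{(s)}=Q_1^{(s)}\cap\cdots\cap Q_d^{(s)}$ for all $s$. The key point is that since $I$ is unmixed, $I^{(s)}$ is also unmixed with the same associated primes $P_1,\dots,P_d$, all of height $\ge 1$ since $\depth(R/I)\ge1$ (alternatively one reads this off Lemma~\ref{lem_positivedepth_symbpow}: $\Ass_R I^{(s)}\subseteq\Ass_R I^s$, and every element of $\Ass_R I^s$ is contained in some $P_k\ne\mm$). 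Hence $\depth(R/I^{(s)})\ge 1$, so $\mm$ is not an associated prime of $R/I^{(s)}$.

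Next, the crucial comparison: I claim $\reg(I^{(s)})\ge d(I^s)\ge 2s$. For the second inequality, $I\subseteq\mm^2$ gives $I^s\subseteq\mm^{2s}$, and since $I^s\ne 0$ a minimal generator has degree $\ge 2s$. For the first inequality, pick a minimal generator $f$ of $I^s$ of maximal degree $\delta=d(I^s)\ge 2s$. Since $I^{(s)}\supseteq I^s$ is a finitely generated graded ideal with $\depth(R/I^{(s)})\ge1$, hence $H^0_\mm(R/I^{(s)})=0$, the element $f\in I^s\subseteq I^{(s)}$ cannot be "absorbed": more precisely, $(I^{(s)})_{\delta}$ is not contained in $\mm\cdot(I^{(s)})$ whenever $\delta\ge 2s$... here I should be a little careful, because $I^{(s)}$ may have generators of degree larger than $d(I^s)$. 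The clean way: by Theorem~\ref{thm_depth_reg}(ii) applied with the roles symmetric, or directly, $\reg(I^{(s)})$ is at least the maximal degree of a minimal generator of $I^{(s)}$ (true for any module: $\reg(I^{(s)})\ge d(I^{(s)})$), and since $I^s\subseteq I^{(s)}\subseteq\mm^{2s}$ the ideal $I^{(s)}$ is generated in degrees $\ge 2s$ and is non-zero, so $d(I^{(s)})\ge 2s$, giving $\reg(I^{(s)})\ge 2s$. The same holds for $J$ when $J\ne 0$; if $J=0$ then $\reg(J^{(i)})=-\infty$ contributes nothing, but since $I\ne 0$ we still have $\reg(I^{(s)})\ge 2s$, so the $2s$ term in Theorem~\ref{thm_depth_reg}(ii) is dominated.

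I expect the main obstacle to be the bookkeeping in the previous paragraph, namely carefully justifying $\reg(I^{(s)})\ge 2s$ without accidentally needing a regularity bound that is not available. The safest route avoids all subtlety: for any non-zero graded ideal $\mathfrak{a}$ with $\mathfrak{a}\subseteq\mm^{c}$, one has $\reg(\mathfrak{a})\ge d(\mathfrak{a})\ge c$ because the minimal free resolution of $\mathfrak{a}$ starts in degree $d(\mathfrak{a})$, so $\Tor_0^R(\mathfrak{a},\kk)_{d(\mathfrak{a})}\ne0$ and $d(\mathfrak{a})-0\le\reg(\mathfrak{a})$. Applying this with $\mathfrak{a}=I^{(s)}$ (non-zero, contained in $\mm^{2s}$ since $I^{(s)}\supseteq I^s$, wait that gives containment the wrong way — rather $I^{(s)}\subseteq(\mm^{2})^{(s)}$? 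No: $I\subseteq\mm^2\Rightarrow I^{(s)}\subseteq(\mm^2)^{(s)}=\mm^{2s}$ since $\mm$ is prime, using that symbolic powers are order-preserving and $(\mm^2)^{(s)}=\mm^{2s}$). Thus $I^{(s)}\subseteq\mm^{2s}$, non-zero, so $\reg(I^{(s)})\ge 2s$, completing the reduction and hence the corollary. I would then just write: "combining this with Theorem~\ref{thm_depth_reg}(ii), the value $2s$ in the maximum is achieved by $\reg(I^{(s)})+s-s$, hence can be dropped, yielding the claimed formula."
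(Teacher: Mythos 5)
Your overall strategy is the same as the paper's: apply \Cref{thm_depth_reg}(ii) and then argue that the term $2s$ is redundant because $\reg(I^{(s)})\ge 2s$. That reduction is correct. The problem is the step you use to get $\reg(I^{(s)})\ge 2s$: you claim $I\subseteq\mm^2$ forces $I^{(s)}\subseteq(\mm^2)^{(s)}=\mm^{2s}$ because ``symbolic powers are order-preserving.'' They are not. The symbolic power is an intersection over $\Ass(R/I)$, and enlarging the ideal changes the set of primes over which one intersects, so $I\subseteq K$ does not imply $I^{(s)}\subseteq K^{(s)}$. Concretely, take $R=\kk[x,y,z]$ and $I=(x,y^2)\cap(y,z^2)=(xy,\,y^2,\,xz^2)$. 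This $I$ satisfies every hypothesis of the corollary: it is a non-zero monomial ideal contained in $\mm^2$, it is unmixed with $\Ass_R(I)=\{(x,y),(y,z)\}$, and $\depth(R/I)=1$. Yet
\[
xy^2\in(x,y^2)^2\cap(y,z^2)^2=I^{(2)},\qquad \deg(xy^2)=3<4=2s,
\]
so $I^{(2)}\not\subseteq\mm^{4}$. Your fallback route via $d(I^s)$ does not close the gap either, for the reason you yourself half-noticed: since $I^{(s)}\supseteq I^s$, the symbolic power can acquire minimal generators of degree \emph{smaller} than $d(I^s)$ (the element $xy^2$ above is exactly such a generator), so $\reg(I^{(s)})\ge d(I^{(s)})$ does not give $\reg(I^{(s)})\ge d(I^s)$. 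A further warning sign is that your final argument uses neither unmixedness nor monomiality, whereas the paper explicitly records $\reg(I^{(s)})\ge 2s$ for general unmixed homogeneous $I\subseteq\mm^2$ as an open question.

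This lower bound is precisely where the paper invests real work, via \Cref{lem_maxdegree_symbolicpowers}: for an unmixed \emph{monomial} ideal one writes $I=Q_1\cap\cdots\cap Q_d$ with all components minimal, so $I^{(s)}=Q_1^s\cap\cdots\cap Q_d^s$ by \Cref{lem_symbolic_powers}, and a combinatorial argument on exponents shows that for each minimal generator $f$ of $\sqrt{I}$ the ideal $I^{(s)}$ has a minimal monomial generator divisible by $f^s$. This yields $d(I^{(s)})\ge d(\sqrt{I})\,s$, and the remaining case $d(\sqrt{I})=1$ (where $\sqrt{I}$ is generated by variables, $I$ is primary, and $I^{(s)}=I^s\subseteq\mm^{2s}$) is handled separately; together these give $\reg(I^{(s)})\ge d(I^{(s)})\ge 2s$. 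In the example above this is visible: $\sqrt{I}=(y,xz)$ and $I^{(2)}$ has the minimal generator $x^2z^4$ of degree $6\ge 4$, divisible by $(xz)^2$. Without this lemma, or some substitute for it, your proof is incomplete.
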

The proof of this corollary requires \Cref{lem_maxdegree_symbolicpowers}, which provides a lower bound for the regularity of symbolic powers of unmixed monomial ideals.   If $I$ is a monomial ideal, denote by $\Gcc(I)$ the set of minimal monomial generators of $I$.
\begin{lem}
\label{lem_maxdegree_symbolicpowers}
Let $I$ be a non-zero proper unmixed monomial ideal of $R$. Let $\Gcc(\sqrt{I})=\{f_1,\ldots,f_k\}$. The following statements hold.
\begin{enumerate}[\quad \rm(i)]
 \item For every $s\ge 1$ and every $1\le i\le k$, there exists an element $g_i\in \Gcc(I^{(s)})$ that is divisible by $f_i^s$.
 \item  Assume further that $I\subseteq \mm^2$. Then for every $s\ge 1$, there is an inequality $d(I^{(s)}) \ge \max\{2,d(\sqrt{I})\}s$.
\end{enumerate}
\end{lem}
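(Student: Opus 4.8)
The plan is to exploit the well-known combinatorial description of symbolic powers of unmixed monomial ideals via localization at the (monomial) minimal primes. First I would recall that since $I$ is unmixed, $\Ass_R(I)=\Min_R(I)$, and write the irredundant primary decomposition $I=Q_1\cap\cdots\cap Q_d$ with $P_j=\sqrt{Q_j}$ ranging over $\Min_R(I)$; each $P_j$ is generated by a subset of the variables. By \Cref{lem_symbolic_powers}, $I^{(s)}=Q_1^{(s)}\cap\cdots\cap Q_d^{(s)}$, and each $Q_j^{(s)}=Q_j^sR_{P_j}\cap R$ is itself $P_j$-primary. The key structural fact I would use is that $\sqrt{I}=P_1\cap\cdots\cap P_d$, so each $f_i\in\Gcc(\sqrt I)$ is, up to reordering, the product of the variables appearing as a "corner" — more precisely, $f_i$ is a minimal generator of $\sqrt I$ means $f_i=\prod_{x\in\sigma_i}x$ for some minimal vertex cover–type set $\sigma_i$, and for each variable $x\in\sigma_i$ there is a minimal prime $P_{j(x)}$ containing $x$.

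For part (i): fix $i$ and consider the monomial $m=f_i^s\cdot h$ where $h$ is chosen to push $m$ into every $Q_j^{(s)}$. Concretely, I would argue that $f_i^s$ already lies in $Q_j^{(s)}$ for every $j$ such that $\sqrt{Q_j}=P_j$ does not contain all of $f_i$... — more carefully: for each $j$, the localization $I^sR_{P_j}=Q_j^sR_{P_j}$ only "sees" the variables in $P_j$, so $f_i^s\in Q_j^{(s)}$ iff the $P_j$-part of $f_i^s$ lies in $Q_j^sR_{P_j}$. Since $f_i\mid\sqrt{Q_j}$ fails in general, the right move is: multiply $f_i^s$ by a suitable monomial in the remaining variables to land in $I^{(s)}$, then pass to a minimal generator $g_i$ of $I^{(s)}$ dividing the resulting monomial. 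Because $f_i$ and that correcting monomial are in disjoint variable sets away from the overlap, and because $f_i^s$ is genuinely "needed" at the primes $P_j\supseteq x$ for $x\in\sigma_i$, no minimal generator of $I^{(s)}$ dividing this monomial can have its $f_i$-exponent drop below $s$. I expect the bookkeeping here — verifying that the $f_i^s$-factor survives the passage to a minimal generator — to be the main obstacle, and the cleanest way to handle it is to localize at a single well-chosen $P_j$ (one that contains every variable of $f_i$, which exists since $\sqrt I\subseteq P_j$ forces... actually one must check such a $P_j$ exists; if not, split $f_i$ across several primes and track exponents variable-by-variable using the valuation $v_{P_j}$).

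For part (ii): this follows formally from (i) together with $I\subseteq\mm^2$. By (i) there is $g\in\Gcc(I^{(s)})$ divisible by $f_1^s$, so $d(I^{(s)})\ge\deg(f_1^s)=s\deg f_1$, and taking the $f_i$ of largest degree gives $d(I^{(s)})\ge s\cdot d(\sqrt I)$. For the bound $d(I^{(s)})\ge 2s$, I would use $I\subseteq\mm^2$: every generator of $I$ has degree $\ge2$, hence $I^s\subseteq\mm^{2s}$, and since $I^s\subseteq I^{(s)}$ is nonzero and $I^{(s)}$ contains no element of degree $<2s$ that is a minimal generator... — more directly, pick any nonzero monomial $u\in I^s$; then $u\in I^{(s)}$ has degree $\ge2s$, and a minimal generator of $I^{(s)}$ dividing $u$ still has degree $\ge2s$ provided $I^{(s)}$ has no generators of smaller degree, which holds because $I^{(s)}\subseteq(\sqrt I)^{(s)}\subseteq\mm^{2s}$ when $\sqrt I\subseteq\mm$ and... the cleanest argument: $I^{(s)}\subseteq\mm^{2s}$ because localizing the inclusion $I\subseteq\mm^2$ at each $P_j$ and taking $s$-th powers gives $Q_j^{(s)}\subseteq(\mm^2R_{P_j}\cap R)^{(s)}$, and intersecting yields $I^{(s)}\subseteq\bigcap_j(P_j^2)^{(s)}$-type containment forcing degree $\ge2s$. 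Combining the two lower bounds $s\cdot d(\sqrt I)$ and $2s$ gives $d(I^{(s)})\ge\max\{2,d(\sqrt I)\}s$, as desired. The only delicate point in (ii) is justifying $I^{(s)}\subseteq\mm^{2s}$, which I would prove by checking it prime-by-prime: for each $P\in\Ass(R/I)$, $I^sR_P\subseteq\mm^{2s}R_P$ since $I\subseteq\mm^2$, and $\mm^{2s}R_P\cap R=\mm^{2s}$ is false in general but $\mm^{2s}R_P\cap R\supseteq\mm^{2s}$ suffices for the wrong direction — instead simply note every monomial generator of $I$ has degree $\ge2$, so every monomial in $I^s$ (hence in $I^{(s)}$, since symbolic powers of monomial ideals are integrally closed-ish... no) — safest is: $I^{(s)}$ is a monomial ideal all of whose associated primes are among $\{P_j\}$, and on $R_{P_j}$ it equals $Q_j^sR_{P_j}\subseteq(\mm R_{P_j})^{2s}$ since $Q_j\subseteq IR_{P_j}\subseteq\mm^2R_{P_j}$; thus any monomial in $I^{(s)}$ has $P_j$-degree $\ge2s$ for all $j$, and since the $P_j$ cover the variables (as $\sqrt I$ is proper and unmixed, $\bigcup_j P_j$ need not be all variables) one gets total degree $\ge2s$ after noting the exponent on variables outside $\bigcup P_j$ only helps. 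I would present this last step carefully rather than wave at it.
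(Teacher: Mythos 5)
Your proposal has genuine gaps in both parts. In part (i), you correctly reduce to showing that a minimal generator $g$ of $I^{(s)}$ dividing a suitable multiple of $f_i^s$ keeps exponent at least $s$ on every variable of $f_i$, but you do not prove this: you explicitly defer ``the bookkeeping'' and propose to localize at a single minimal prime $P_j$ containing all variables of $f_i$. Such a prime need not exist; for $I=(x^2)\cap(y^2)=(x^2y^2)$ one has $\sqrt{I}=(xy)$ while the minimal primes are $(x)$ and $(y)$, neither of which contains both variables. The fallback ``track exponents variable-by-variable'' is precisely the missing argument, and it needs an input you never invoke: the \emph{minimality} of $f_i$ in $\Gcc(\sqrt{I})$. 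The paper's proof takes $g\in\Gcc(I^{(s)})$ dividing $f_i^{qs}\in I^s$ (where $f_i^q\in I$), writes $I^{(s)}=Q_1^s\cap\cdots\cap Q_d^s$ using unmixedness and \Cref{lem_symbolic_powers}, and argues by contradiction: if a variable $x_1$ of $f_i$ had exponent $\alpha_1\le s-1$ in $g$, then in each $Q_j^s$ one of the $s$ monomial factors of $g$ avoids $x_1$, whence $g/x_1^{\alpha_1}\in Q_j$ for every $j$, so $f_i/x_1\in\sqrt{I}$, contradicting the minimality of $f_i$. Without this step (or an equivalent use of minimality), part (i) is unproved.

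In part (ii), the deduction $d(I^{(s)})\ge s\,d(\sqrt{I})$ from (i) is fine, but your route to the bound $2s$ rests on the claim $I^{(s)}\subseteq\mm^{2s}$, and that claim is false. Take $R=\kk[x,y,z]$ and $I=(xy,xz,y^4)=(x,y^4)\cap(y,z)$, which is unmixed and contained in $\mm^2$. Then $x^3y^4=x\cdot x\cdot x\cdot y^4\in(x,y^4)^4$ and $x^3y^4\in(y,z)^4$, so $x^3y^4\in I^{(4)}$ has degree $7<2\cdot 4$ (and is in fact a minimal generator); hence $I^{(4)}\not\subseteq\mm^{8}$. Your localization argument fails because $\mm R_{P_j}$ is the unit ideal whenever $P_j\neq\mm$, so the containment $Q_j^sR_{P_j}\subseteq(\mm R_{P_j})^{2s}$ is vacuous and yields no bound on the $P_j$-degree. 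The paper avoids this by a case split: if $d(\sqrt{I})\ge 2$, the bound $2s$ already follows from part (i); if $d(\sqrt{I})=1$, then $\sqrt{I}$ is generated by variables, hence prime, so the unmixed ideal $I$ is primary, $I^{(s)}=I^s\subseteq\mm^{2s}$, and the bound holds. Some such case distinction is needed; as written, your argument for the $2s$ bound is incorrect.
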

\begin{proof}
 (i) Denote by $x_1,\ldots,x_m$ the variables of $R$. It suffices to consider $i=1$, and without loss of generality, assume that $f_1=x_1x_2\cdots x_t$, where $t\ge 1$. Write $f=f_1$ for simplicity. We claim that for every $s\ge 1$, there is an element $g\in I^{(s)}$ which is divisible by $f^s$.
 
 Since $f\in \sqrt{I}$, for some $q\ge 1, f^q\in I$. Hence $f^{qs} \in I^s \subseteq I^{(s)}$. Thus there exist non-negative integers $\alpha_1,\ldots,\alpha_t$ such that $g=x_1^{\alpha_1}\cdots x_t^{\alpha_t}\in \Gcc(I^{(s)})$. We claim that $\alpha_i\ge s$ for all $i=1,\ldots,t$.
 
 Assume the contrary, that for instance $\alpha_1\le s-1$. Let $I=Q_1\cap \cdots \cap Q_d$ be an irredundant primary decomposition of $I$, where the components $Q_i$ are monomial ideals. Note that $Q_i^s$ is primary for every $i=1,\ldots,d$. Hence by the hypothesis that $I$ is unmixed and Lemma \ref{lem_symbolic_powers}, $I^{(s)}=Q_1^s\cap \cdots \cap Q_d^s$.
 
For each $1\le j\le d$, we have $g=x_1^{\alpha_1}\cdots x_t^{\alpha_t}\in Q_j^s$. Hence $g$ is a product of $s$ monomials in $Q_j$. Since $\alpha_1\le s-1$, one of these monomials, say $h_j$, divides $g'=x_2^{\alpha_2}\cdots x_t^{\alpha_t}$. In other words, $g'\in Q_j$.

In particular, $g'\in Q_1\cap \cdots \cap Q_d=I$. This implies $x_2\cdots x_t\in \sqrt{I}$, which contradicts the assumption that $f=x_1x_2\cdots x_t$ is a minimal generator of $\sqrt{I}$.

Therefore $\alpha_i\ge s$ for all $i=1,\ldots,t$. This shows that $f^s$ divides $g\in \Gcc( I^{(s)})$, as desired.

(ii) It is harmless to assume that $\deg(f_1)=d(\sqrt{I})$. If $d(\sqrt{I})\ge 2$ holds then by part (i), $I^{(s)}$ has a minimal generator of degree at least $s\deg(f_1)=d(\sqrt{I})s \ge 2s$, so we are done. Assume that $d(\sqrt{I})=1$. By renaming the variables, we can assume that $f_i=x_i$ for every $i=1,\ldots,k$. Hence $I$ is a primary ideal with $\sqrt{I}=(x_1,\ldots,x_k)$. This implies that $I^{(s)}=I^s$, which is generated in degree at least $2s$, as $I\subseteq \mm^2$. Therefore $d( I^{(s)}) \ge 2s$. The proof is concluded.
\end{proof}
Next we present the

\begin{proof}[Proof of Corollary \ref{cor_reg_Fs_unmixedcase}]
By Theorem \ref{thm_depth_reg}, there is an equality
$$
\reg (F^{(s)})=\max\limits_{i\in [1,s]}\left\{2s,\reg(I^{(i)})+s-i, \reg(J^{(i)})+s-i \right\}.
$$
It remains to observe that by Lemma \ref{lem_maxdegree_symbolicpowers}, $\reg I^{(s)}\ge d(I^{(s)}) \ge 2s$.
\end{proof}

Denote $\Rcc_s(I)=R\oplus I^{(1)}t \oplus I^{(2)}t^2 \oplus \cdots \subseteq R[t]$ the \emph{symbolic Rees algebra} of $I$. The \emph{symbolic analytic spread} of $I$ is defined as
\[
\ell_s(I):= \dim(\Rcc_s(I)/\mm \Rcc_s(I)).
\]
It is known that if $\Rcc_s(I)$ is Noetherian, then $\ell_s(I)$ is finite. In particular, this is the case when $I$ is a monomial ideal. See \cite{DM21} for more discussion on the symbolic analytic spread. Let $I\subset R$ be a squarefree monomial ideal. It is known by \cite[Theorem 2.4]{HKTT} that $${\rm depth}\big(R/I^{(k)}\big)=\dim R-\ell_s(I),$$for any integer $k\gg 0$. Thus, we obtain the following corollary as a consequence of Theorem \ref{thm_depth_reg}.

\begin{cor}
Assume moreover that $I$ and $J$ are squarefree monomial ideals. Then there is an equality 
$$
\ell_s(F)=\dim R+ \dim S-1.
$$
\end{cor}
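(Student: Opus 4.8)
The plan is to deduce this from \Cref{thm_depth_reg}(i) together with the Hoa--Kimura--Trung--Trung formula $\depth(R/I^{(k)})=\dim R-\ell_s(I)$ for $k\gg 0$, applied now to the squarefree monomial ideal $F$ in the ring $T$. First I would observe that since $I$ and $J$ are squarefree monomial ideals contained in $\mm^2$ and $\nn^2$ respectively, the fiber product $F=I+J+\mm\nn$ is a squarefree monomial ideal of $T$ (each of $I$, $J$, and $\mm\nn$ is generated by squarefree monomials, and $\mm\nn$ is generated by the products $x_iy_j$). Moreover, $\depth(R/I)\ge 1$ and $\depth(S/J)\ge 1$: indeed, a squarefree monomial ideal $I\subseteq\mm^2$ cannot have $\mm$ as an associated prime, since $\mm\in\Ass(R/I)$ would force a variable $x_i$ with $I:x_i=\mm$, impossible when $I$ is squarefree and contained in $\mm^2$ (one checks $x_i^2\notin I$ but $x_ix_j\cdot x_i\notin\mm$ argument, or simply: squarefree monomial ideals are radical, hence unmixed on their minimal primes, so $\Ass(R/I)=\Min(R/I)$ and $\mm$ would have to be a minimal prime, forcing $I=\mm\not\subseteq\mm^2$). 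Hence the standing hypothesis $\min\{\depth(R/I),\depth(S/J)\}\ge1$ of \Cref{thm_depth_reg} is automatically satisfied.

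Next I would apply \Cref{thm_depth_reg}(i): for every $s\ge1$, $\depth(T/F^{(s)})=1$. In particular $\depth(T/F^{(k)})=1$ for all $k\gg0$. On the other hand, since $F$ is a squarefree monomial ideal of the polynomial ring $T$, the cited result \cite[Theorem 2.4]{HKTT} gives $\depth(T/F^{(k)})=\dim T-\ell_s(F)$ for $k\gg0$. Combining the two, $1=\dim T-\ell_s(F)$, i.e.
\[
\ell_s(F)=\dim T-1=\dim R+\dim S-1,
\]
using $\dim T=\dim(R\otimes_\kk S)=\dim R+\dim S$. This gives the claimed equality.

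The only delicate point — and the one I would spell out carefully — is the verification that $F$ genuinely falls under the hypotheses of both cited results: that it is squarefree monomial (clear), and that the positive-depth hypothesis needed to invoke \Cref{thm_depth_reg} holds for $I$ and $J$. The latter is where a short argument is required, namely that a squarefree monomial ideal contained in the square of the maximal ideal has positive-depth quotient; this follows because such an ideal is radical with all associated primes minimal, and none of them can equal $\mm$ (else $I\supseteq\mm^2$ together with $I$ squarefree would force $I=\mm$, contradicting $I\subseteq\mm^2$ when $\dim R\ge1$). Everything else is a direct substitution. I do not anticipate any serious obstacle; the proof is essentially a one-line consequence of \Cref{thm_depth_reg}(i) once the hypotheses are checked, and the main content has already been done.
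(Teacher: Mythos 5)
Your proof is correct and follows exactly the route the paper intends: combine \Cref{thm_depth_reg}(i) with the formula $\depth(T/F^{(k)})=\dim T-\ell_s(F)$ from \cite[Theorem 2.4]{HKTT} applied to the squarefree monomial ideal $F\subseteq T$, using $\dim T=\dim R+\dim S$. Your extra verification that $I\subseteq\mm^2$, $J\subseteq\nn^2$ squarefree forces $\min\{\depth(R/I),\depth(S/J)\}\ge 1$ (via $\Ass=\Min$ and $\mm\notin\Min(R/I)$) is a sound and worthwhile check of the standing hypothesis of \Cref{thm_depth_reg}.
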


In the case $\min \left\{\depth(R/I),\depth(S/J)\right\}=0$, we will prove the following result.
\begin{thm}
\label{thm_depth_zerodepthcase}
Assume that $\min \left\{\depth(R/I),\depth(S/J)\right\}=0$. Then for all $s\ge 1$,  there are equalities $F^{(s)}=F^s$, and
\begin{align*}
\depth(T/F^{(s)})&=0,\\
\reg F^{(s)} &= \max \limits_{i\in [1,s]} \left\{\reg(\mm^{s-i}I^i)+s-i, \reg(\nn^{s-i}J^i)+s-i\right\}.
\end{align*}
\end{thm}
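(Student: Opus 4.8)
The plan is to reduce \Cref{thm_depth_zerodepthcase} to the already-proven statements about ordinary powers, namely \Cref{prop_depthordinarypower_main} (= \Cref{prop_depthordinarypower_0}) and \Cref{thm_regordinarypow_main} (= \Cref{thm_reg_ordinarypow}). First I would observe that the equality $F^{(s)}=F^s$ is immediate from \Cref{lem_decomposition}(2): the hypothesis $\min\{\depth(R/I),\depth(S/J)\}=0$ is precisely the hypothesis of that part of the lemma, so $F^{(s)}=F^s$ for all $s\ge 1$. Alternatively one can cite \Cref{lem_AssMin_F}(i), which shows $\mm T+\nn T=\pp\in\Ass_T F$ when $\depth(R/I)=0$ (say $\mm\in\Ass_R I$), hence $\depth(T/F)=0$, and then invoke \Cref{lem_positivedepth_symbpow}(i).

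Once $F^{(s)}=F^s$ is in hand, the depth statement $\depth(T/F^{(s)})=0$ follows: for $s\ge 2$ it is exactly \Cref{prop_depthordinarypower_main}, and for $s=1$ it follows from \Cref{lem_decomposition_F}(ii), which gives $\depth(T/F)=\min\{1,\depth(R/I),\depth(S/J)\}=0$ under our hypothesis. Similarly, the regularity formula is just \Cref{thm_regordinarypow_main} applied to $F^s=F^{(s)}$; note that in the zero-depth case at least one of $I,J$ must be non-zero (an ideal $I\subseteq\mm^2$ with $\depth(R/I)=0$ is in particular non-zero, since $\depth(R/(0))=\dim R\ge 1$), so we are in the situation covered by the second (cleaner) formula of \Cref{thm_regordinarypow_main}, which is exactly the asserted formula.

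Concretely I would write: since $\min\{\depth(R/I),\depth(S/J)\}=0$, say without loss of generality $\depth(R/I)=0$; then $I\neq(0)$ (as $\depth(R/(0))=\dim R\ge1$), so in particular $I$ or $J$ is non-zero. By \Cref{lem_decomposition}(2), $F^{(s)}=F^s$ for all $s\ge1$. For the depth, $\depth(T/F)=0$ by \Cref{lem_decomposition_F}(ii), and $\depth(T/F^s)=0$ for $s\ge2$ by \Cref{prop_depthordinarypower_0}; hence $\depth(T/F^{(s)})=0$ for all $s\ge1$. For the regularity, apply the second displayed formula of \Cref{thm_reg_ordinarypow} to $F^s=F^{(s)}$ (valid since $I$ or $J$ is non-zero), obtaining the claimed expression.

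I do not anticipate a genuine obstacle here, since this theorem is by design a corollary assembled from earlier results; the only points requiring a word of care are (a) checking that the $s=1$ case of the depth statement is covered (it is not part of \Cref{prop_depthordinarypower_0}, which starts at $s\ge2$, but is handled by \Cref{lem_decomposition_F}(ii)), and (b) confirming that the zero-depth hypothesis forces one of the factors to be non-zero, so that the sharper regularity formula of \Cref{thm_reg_ordinarypow} — without the superfluous "$2s$" term — applies. Both are quick to verify.
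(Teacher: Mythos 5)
Your proposal is correct and follows essentially the same route as the paper's proof: deduce $F^{(s)}=F^s$ from \Cref{lem_decomposition}(2), note that the zero-depth hypothesis together with $\dim R,\dim S\ge 1$ forces one of $I,J$ to be non-zero, and then quote \Cref{prop_depthordinarypower_0} and the second formula of \Cref{thm_reg_ordinarypow}. Your explicit handling of the $s=1$ depth case via \Cref{lem_decomposition_F}(ii) is a point the paper leaves implicit, and is a welcome bit of extra care.
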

It is clear from \Cref{lem_decomposition} that in the non-trivial case where $\min \{\depth(R/I),$ $\depth(S/J)\}=0$, and $I$ and $J$ are not both zero, $\depth(T/F)=0$ and $F^{(s)}=F^s$ for all $s\ge 1$. Therefore, it is necessary to study the depth and regularity of ordinary powers of $F$ first. This is what we will do in the next section, before returning to the proof of \Cref{thm_depth_zerodepthcase}.

\section{Depth and regularity of ordinary powers}
\label{sect_ordinarypowers}

Keep using \Cref{notn_IandJ}. The following result is a sharpening of \cite[Theorem 6.1]{NgV19b}: we are able to relax the hypothesis on the characteristic of $\kk$ of \emph{ibid.}
\begin{prop}
\label{prop_depthordinarypower_0}
Let $\kk$ be a field of arbitrary characteristic. Assume that $I$ and $J$ are not both zero. Then for all $s\ge 2$, there is an equality $\depth(T/F^s)=0$.
\end{prop}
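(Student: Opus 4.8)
The strategy is to show directly that the maximal graded ideal $\pp = \mm T + \nn T$ is an associated prime of $T/F^s$, i.e.\ that $\depth(T/F^s)=0$, by exhibiting an explicit socle element. Without loss of generality assume $I\neq(0)$, and pick a minimal homogeneous generator $u\in I$ of degree $e=d(I)\ge 2$. First I would write down the decomposition of $F^s$ provided by \Cref{lem_decompose_ordinarypow}:
\[
F^s=I^s+J^s+\mm\nn(I^{s-1}+J^{s-1})+\cdots+(\mm\nn)^s.
\]
The candidate socle element is a homogeneous element of the form $z = u^{s-1}\cdot w$, where $w$ is a suitable monomial (or product of linear forms) in the variables of $S$ of degree chosen so that $z\in F^s$ but $z\notin \pp F^{s}$; more precisely I expect the right candidate to live in $\mm\nn F^{s-1}$ but to be forced out of any larger "nested" piece by a bidegree/$\Z^2$-grading argument. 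Concretely, using the standard $\Z^2$-grading on $T$ with $\deg x_i=(1,0)$, $\deg y_j=(0,1)$, one tracks the bidegree of each summand of $F^s$: the piece $\mm^a\nn^b I^c J^d$ contributes elements of bidegree $(\ge a+ce,\ \ge b+d\cdot d(J))$ roughly speaking, and the key point is that an element of bidegree $((s-1)e,\,1)$ can only lie in the summand $\mm\nn I^{s-1}$ (equivalently $\nn\cdot I^{s-1}\cap \mm^{\ge 1}$), since every other summand forces either the first coordinate above $(s-1)e$ or the second coordinate above $1$.

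The core computation is then: choose $z = u^{s-1} y_1$ (with $y_1$ a variable of $S$), check $z\in \mm\nn I^{s-1}\subseteq F^s$, and verify $x_i z\notin F^s$ and $y_j z\notin F^s$ for all $i,j$. For the $y_j z$ case: $y_jz = u^{s-1}y_1y_j$ has bidegree $((s-1)e, 2)$, and one argues via the $\Z^2$-grading that the only summands of $F^s$ reaching this bidegree are among $\mm\nn I^{s-1}$, $(\mm\nn)^2 I^{s-2}$, etc.; membership there would force $u^{s-1}\in I^{s-1}\cdot(\text{something in }\mm)$ or similar, contradicting that $u^{s-1}$ is (essentially) a minimal generator of $I^{s-1}$ — here I would invoke that $u$ minimally generates $I$ so $u^{s-1}\notin \mm I^{s-1}$, which needs a small lemma or a degree count since $d(I^{s-1})\le (s-1)e$ with equality detected by $u^{s-1}$. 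For the $x_iz$ case the bidegree becomes $((s-1)e+1, 1)$; the $J$-summands are killed by the second coordinate being $1<d(J)$ (as $J\subseteq\nn^2$, any genuine contribution of $J$ needs $\nn$-degree $\ge 2$), and the $\mm\nn$-summand is killed because the $\nn$-degree being exactly $1$ forces us into $\mm^a I^c$ with $a+ce=(s-1)e+1$, which again contradicts minimality of $u^{s-1}$ in $I^{s-1}$.

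The main obstacle is the bookkeeping in this last step: one must be careful that "minimal generator of $I$" genuinely implies $u^{s-1}\notin \mm I^{s-1}$ and, more subtly, that $u^{s-1}$ cannot be written using $J$ or higher powers of $\mm\nn$ even after multiplying by one variable — this is where the $\Z^2$-grading is indispensable, because it lets us argue componentwise and discard all but one or two summands of $F^s$ immediately. If the direct socle argument proves delicate for non-monomial $I$, a fallback is to base-change: tensor with a polynomial extension to reduce to the monomial initial-ideal situation, or alternatively to argue that $H^0_{\pp}(T/F^s)\supseteq \big((I+\nn)^s\cap(J+\mm)^s\big)/F^s$ is nonzero for $s\ge 2$ — this finite-length submodule is exactly the object highlighted in the introduction, and showing it is nonzero (e.g.\ by producing $u^{s-1}\nn^{?}$-type elements in it) would immediately give $\depth(T/F^s)=0$ without needing an exact socle computation. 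I would present the $\Z^2$-graded socle argument as the main proof and keep the finite-length-submodule observation in reserve.
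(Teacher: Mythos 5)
Your overall strategy---exhibit a socle element of $T/F^s$ living in $\nn$-degree $1$ and detect it via the standard bigrading of $T$---is exactly the route the paper takes (its \Cref{lem_noncontainmentsandcontainments} packages the bigraded bookkeeping, and the proof of \Cref{prop_depthordinarypower_0} produces the socle inside $I^{s-1}\nn$). However, as written your proposal has the socle conditions inverted, and this is not a cosmetic slip. An element $z$ gives a nonzero socle class in $T/F^s$ precisely when $z\notin F^s$ and $\pp z\subseteq F^s$. You instead propose to ``check $z\in \mm\nn I^{s-1}\subseteq F^s$'' and then ``verify $x_iz\notin F^s$ and $y_jz\notin F^s$'' --- these two demands contradict each other (if $z\in F^s$ then trivially $x_iz\in F^s$), and even the charitable reading (that you meant to prove $z\notin F^s$, $x_iz\notin F^s$, $y_jz\notin F^s$) establishes the opposite of what is needed. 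The correct division of labor is: the containment $\pp\cdot I^{s-1}\nn\subseteq F^s$ is the easy half, following from $I^{s-1}\mm\nn\subseteq F^{s-1}F$ and $I^{s-1}\nn^2\subseteq I^{s-2}(\mm\nn)^2\subseteq F^s$; the hard half is the non-membership $z\notin F^s$, and that is where your bidegree analysis (correctly) belongs. Your computation of the $\nn$-degree-$1$ part of $F^s$ --- the $J$-summands need $\nn$-degree $\ge 2$, the summands $(\mm\nn)^{s-i}(\cdots)$ with $i\le s-2$ need $\nn$-degree $\ge 2$, so $(F^s)_{(\ast,1)}=\mm I^{s-1}\otimes_\kk S_1$ --- is the right idea and is essentially the content of the paper's \Cref{lem_noncontainmentsandcontainments}(1) in the case $i=s-1$.

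The second gap is your choice of witness. You reduce everything to the claim that $u^{s-1}\notin\mm I^{s-1}$ for $u$ a minimal generator of $I$ of degree $d(I)$, and you flag this as ``a small lemma or a degree count.'' For arbitrary homogeneous (non-monomial, non-equigenerated) ideals this is not a routine degree count, and you give no proof; it is exactly the kind of generator-by-generator statement that is delicate outside the monomial setting. The paper sidesteps it entirely: it shows that the containment $I^{s-1}\nn\subseteq F^s$ would force $I^{s-1}\subseteq (F^s:_T\nn)\cap(I^{s-1}\cap\mm^s)R=\mm I^{s-1}$, and then Nakayama's lemma applied to the whole module $I^{s-1}\ne 0$ gives the contradiction --- no specific generator needs to survive. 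Your argument is repaired the same way: take \emph{any} homogeneous $u'\in I^{s-1}\setminus\mm I^{s-1}$ (nonempty by Nakayama) and set $z=u'y_1$; then $z\notin F^s$ by your bidegree computation and $\pp z\subseteq F^s$ as above. Your fallback via the finite-length submodule $\bigl((I+\nn)^s\cap(J+\mm)^s\bigr)/F^s$ is legitimate and is indeed used later in the paper for the regularity bound, but showing it is nonzero reduces to the very same non-membership, so it does not avoid the issue.
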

The following technical lemma plays a key role in this section.
\begin{lem}
\label{lem_noncontainmentsandcontainments}
Let $s\ge 2$ and $1\le i\le s-1$ be integers. Assume that $I\subseteq \mm^2$. Then the following statements hold.
\begin{enumerate}[\quad \rm (1)]
\item There is a containment between ideals of $T$
\begin{equation*}
(I^i\cap \mm^s)\nn^{s-i}\cap F^s \subseteq (\mm\nn)^{s-i}F^i. 
\end{equation*}
and an equality between ideals of $R$
\[
(F^s:_T\nn^{s-i}) \cap (I^i\cap \mm^s)R=\mm^{s-i}I^i.
\]
\item Assume moreover that $I\neq (0)$. Then there is a non-containment 
$$
I^i\mm^{s-i-1}\nn^{s-i}\not\subseteq F^s.
$$
\end{enumerate}
\end{lem}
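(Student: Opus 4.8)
The plan is to work throughout with the standard $\Z^2$-grading on $T=R\otimes_\kk S$ given by $\deg x_i=(1,0)$ and $\deg y_j=(0,1)$, so that all ideals in sight are bigraded and it suffices to test every asserted containment, equality, or non-containment on bigraded (equivalently, monomial-supported) elements. I would first record, as a preliminary observation, the bidegree bookkeeping coming from the decomposition $F^s=I^s+J^s+\mm\nn(I^{s-1}+J^{s-1})+\cdots+(\mm\nn)^s$ of \Cref{lem_decompose_ordinarypow}: an element of bidegree $(a,b)$ lying in $F^s$ must lie in $\sum_{k} (I^{\max(k-?,0)}\cdots)$ — more precisely, I would extract the clean fact that a bigraded element $z$ of bidegree $(a,b)$ belongs to $F^s$ if and only if $z \in \sum_{k=0}^{s} (I^{a_k})(J^{b_k})$-type pieces with $a_k+b_k$ controlled by $s$ and the total degree split as $(a,b)$; the operative consequence is: if $z$ has bidegree $(a,b)$ with $a\le s-1$, then membership in $F^s$ forces $z\in I^{?}\nn^{b}$-style pieces where the $\nn$-exponent is at least $s-a$ (because each ``$(\mm\nn)$'' factor not absorbed into an $I$ must contribute to the $\nn$-side). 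This bidegree lemma is the engine for all three parts.

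For part (1), first containment: take a bigraded $z\in (I^i\cap\mm^s)\nn^{s-i}\cap F^s$. Write $z = w\cdot u$ with $w\in I^i\cap\mm^s$ a monomial of bidegree $(a,0)$, $a\ge s$, and $u\in\nn^{s-i}$ of bidegree $(0,b)$, $b\ge s-i$. Applying the bidegree analysis to $z\in F^s$ at $\nn$-degree $b$: since $z\in\mm^a$ with $a\ge s$ on the $R$-side, the $F^s$-membership and the fact that $z$'s $R$-part already lies in $I^i$ let me conclude $z\in (\mm\nn)^{s-i}F^i$ by matching the $\nn$-exponent $b\ge s-i$ against the $\mm$-side slack $a-(\text{degree used by }F^i)$; here I use $I\subseteq\mm^2$ so that $F^i\subseteq\mm^{2i}+\cdots$ and the $\mm$-powers line up. The equality $(F^s:_T\nn^{s-i})\cap(I^i\cap\mm^s)R=\mm^{s-i}I^i$: the inclusion ``$\supseteq$'' is immediate since $\mm^{s-i}I^i\cdot\nn^{s-i}\subseteq(\mm\nn)^{s-i}I^i\subseteq F^s$ and $\mm^{s-i}I^i\subseteq I^i\cap\mm^{s}$ (using $I\subseteq\mm^2$ so $I^i\subseteq\mm^{2i}$, hence $\mm^{s-i}I^i\subseteq\mm^{s+i}\subseteq\mm^s$); for ``$\subseteq$'', take a monomial $f\in R$, $f\in I^i\cap\mm^s$, with $f\nn^{s-i}\subseteq F^s$, and feed $f\cdot y_j^{s-i}$ into the bidegree lemma at $\nn$-degree exactly $s-i$ to force $f\in\mm^{s-i}I^i$.

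For part (2), I want $I^i\mm^{s-i-1}\nn^{s-i}\not\subseteq F^s$ when $I\ne 0$. I would pick a minimal monomial generator $f\in\Gcc(I)$, so $f\in I\setminus\mm^{\deg f+1}$ with $\deg f\ge 2$ (as $I\subseteq\mm^2$), form the candidate $z=f^i\cdot g\cdot h$ where $g\in R$ is a monomial of degree $s-i-1$ chosen so that $f^i g$ is a minimal generator of $I^i\mm^{s-i-1}$ (e.g. a power of a single variable avoiding needless cancellation), and $h\in S$ a monomial of degree $s-i$. Then $z$ has $\nn$-degree exactly $s-i$, so by the bidegree lemma, $z\in F^s$ would force $z \in (\mm\nn)^{s-i}\cdot(\text{$\mm$-side of }F^i)$, i.e. the $R$-part $f^i g$ of $z$ would have to lie in $\mm^{s-i}I^i$; but $f^i g$ has total $R$-degree $i\deg f + (s-i-1)$ while any element of $\mm^{s-i}I^i$ has degree $\ge (s-i)+i\deg f$, a contradiction by one degree — that ``$-1$'' in the exponent $s-i-1$ is exactly what makes the non-containment work. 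The main obstacle I anticipate is making the bidegree lemma precise enough: namely, proving that for a bigraded $z$ of bidegree $(a,b)$ with $a\le s-1$, membership $z\in F^s$ really does force the full $\nn$-exponent $s-a$ to appear (as opposed to a messier distribution), and doing so cleanly from $F^s=I^s+J^s+\mm\nn(I^{s-1}+\cdots)$ by induction on $s$; once that structural fact is nailed down, parts (1) and (2) are short degree-counting arguments, but I'd budget most of the write-up for that lemma and would likely isolate it as a separate sublemma before the main proof.
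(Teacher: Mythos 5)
Your plan is sound and runs on the same engine as the paper's proof, namely the expansion $F^s=\sum_{k=0}^{s}(\mm\nn)^{s-k}(I^k+J^k)$ of \Cref{lem_decompose_ordinarypow} combined with inspection of components in the standard bigrading of $T$. The packaging is genuinely different, though: the paper proves the containment $(I^i\cap \mm^s)\nn^{s-i}\cap F^s \subseteq (\mm\nn)^{s-i}F^i$ by reverse induction on $t$ from $s$ down to $i$, peeling off one factor at a time via $F^{t+1}=I^{t+1}+J^{t+1}+\mm\nn F^t$ and doing the bidegree inspection at each step; it then deduces the colon equality from that containment, and obtains part (2) from the equality via Nakayama's lemma. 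You propose a one-shot bidegree decomposition of $F^s$: in $\nn$-degree exactly $s-i$ (with $1\le i\le s-1$) only the terms $(\mm\nn)^{s-k}I^k$ with $k\ge i$ survive, and $\sum_{k\ge i}\mm^{s-k}I^k=\mm^{s-i}I^i$ because $I\subseteq\mm^2$. This yields the colon equality directly --- in fact it gives the slightly stronger statement $(F^s:_T\nn^{s-i})\cap R=\mm^{s-i}I^i$, without intersecting with $(I^i\cap\mm^s)R$ --- and your degree count $\deg(f^ig)=id+(s-i-1)<(s-i)+id$ (where $d$ is the initial degree of $I$) replaces the paper's Nakayama argument for (2). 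Both routes are correct; yours trades the induction for a single, somewhat heavier bookkeeping lemma, which you rightly identify as the place where most of the work lives.

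Two repairs are needed in the write-up. First, $I$ is only assumed homogeneous, not monomial, so you cannot take $f\in\Gcc(I)$ or speak of ``monomials'' of $I^i\cap\mm^s$; take instead any nonzero homogeneous $f\in I$ of minimal degree $d\ge 2$ (the degree count genuinely requires $f$ to have minimal degree, since $\mm^{s-i}I^i$ is only bounded below in degree by $(s-i)+id$), note $f^ig\neq 0$ because $R$ is a domain, and work with bigraded components $[(I^i\cap\mm^s)\nn^{s-i}]_{(a,b)}=[I^i\cap\mm^s]_a\otimes[\nn^{s-i}]_b$ rather than single products $w\cdot u$. Second, your sketch of the first containment (``matching the $\nn$-exponent against the $\mm$-side slack'') glosses over the only nontrivial point: the terms $(\mm\nn)^{s-k}I^k$ and $(\mm\nn)^{s-k}J^k$ with $k>i$ are \emph{not} contained in $(\mm\nn)^{s-i}F^i$ as ideals, and it is only after restricting to bidegrees $(a,b)$ with $a\ge s$ and $b\ge s-i$ that the inclusions $I^{k-i}\subseteq\mm^{2(k-i)}$ and $J^{k-i}\subseteq\nn^{2(k-i)}$ land those components inside $\mm^{s-i}I^i\nn^{s-i}$ and $\mm^{s-i}\nn^{s-i}J^i$ respectively. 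Your bidegree lemma must therefore record explicitly that the $(a,b)$-component of $(\mm\nn)^{s-k}I^k$ vanishes unless $a\ge s+k$ and $b\ge s-k$ (and symmetrically for the $J$-terms); once that is stated, the rest goes through as you predict.
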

\begin{proof}
(1) For the containment, we product by reverse induction on $i\le t\le s$ that
\[
(I^i\cap \mm^s)\nn^{s-i}\cap F^s \subseteq (\mm\nn)^{s-t}F^t.
\]
There is nothing to do if $t=s$. Assume that $i\le t\le s-1$ and the statement is true for $t+1$, so
\[
(I^i\cap \mm^s)\nn^{s-i}\cap F^s \subseteq (\mm\nn)^{s-t-1}F^{t+1}=(\mm\nn)^{s-t-1}I^{t+1}+(\mm\nn)^{s-t-1}J^{t+1}+(\mm\nn)^{s-t}F^t.
\]
The equality holds because of \Cref{lem_decompose_ordinarypow}.

All the ideals in the last display are bigraded in the standard bigrading of $T$. Take a bigraded element $x\in (I^i\cap \mm^s)\nn^{s-i}\cap F^s$. Then $\deg x=(a,b)$ where $a\ge s, b\ge s-i$. From the last display, inspecting degrees, we conclude that
\begin{align*}
x&\in \mm^{s-t-1}I^{t+1}\cap \nn^{s-i}+\mm^s\cap (\nn^{s-t-1}J^{t+1})+(\mm\nn)^{s-t}F^t \\
  &=   \mm^{s-t-1}I^{t+1}\nn^{s-i}+\mm^s\nn^{s-t-1}J^{t+1}+(\mm\nn)^{s-t}F^t\\
  &=(\mm\nn)^{s-t-1}I\nn^{t+1-i}I^t+(\mm\nn)^{s-t-1}\mm^{t+1}J^{t+1}+(\mm\nn)^{s-t}F^t\\
  & \subseteq (\mm\nn)^{s-t}\nn^{t-i}I^t+(\mm\nn)^{s-t}\mm^tJ^t+(\mm\nn)^{s-t}F^t\\
  &=(\mm\nn)^{s-t}F^t.
\end{align*}
The first equality in the chain follows from \Cref{lem_intersection}. The containment $\subseteq$ holds since $I\subseteq \mm, J\subseteq \nn$. The last equality holds since $I,J\subseteq F$. Hence we complete the induction step, and thereby obtain
\begin{equation}
\label{eq_containmentinT}
(I^i\cap \mm^s)\nn^{s-i}\cap F^s \subseteq (\mm\nn)^{s-i}F^i.
\end{equation}

For the equality
\[
(F^s:_T\nn^{s-i}) \cap (I^i\cap \mm^s)R=\mm^{s-i}I^i,
\]
since $I,\mm\nn \subseteq F$, clearly the left-hand side contains the right-hand side. For the reverse containment, since both sides are homogeneous ideals of $R$, it suffices to take an arbitrary homogeneous element $x$ in the left-hand side. Then
\[
x\nn^{s-i} \in (I^i\cap \mm^s)\nn^{s-i}\cap F^s \subseteq  (\mm\nn)^{s-i}F^i,
\]
thanks to \eqref{eq_containmentinT}. Using \Cref{lem_decompose_ordinarypow} and the fact that $J\subseteq \nn$,
\[
x\nn^{s-i} \in (\mm\nn)^{s-i}F^i=(\mm\nn)^{s-i}(I^i+J^i+\mm\nn F^{i-1})\subseteq \mm^{s-i}I^i+\nn^{s-i+1}.
\]
The minimal generators of $x\nn^{s-i}$ have bidegree $(\deg x, s-i)$, so they all belong to $\mm^{s-i}I^i$. Hence the last chain implies 
$x\nn^{s-i} \subseteq \mm^{s-i}I^i$. In particular, $x\in \mm^{s-i}I^i:y_1^{s-i}=\mm^{s-i}I^i$. Thus we get the desired containment.

(2) Assume the contrary, that for some $1\le i\le s-1$, we have
$$
I^i\mm^{s-i-1}\nn^{s-i}\subseteq F^s.
$$
Since $I\subseteq \mm^2$, it holds that $I^i\mm^{s-i-1}\subseteq \mm^{s+i-1}\subseteq \mm^s$. Using part (1), this yields
\[
I^i\mm^{s-i-1} \subseteq (F^s:_T \nn^{s-i}) \cap (I^i\cap \mm^s)R= \mm^{s-i}I^i.
\]
But then Nakayama's lemma yields the contradiction $I^i\mm^{s-i-1}=0$. Hence the stated non-containment holds true.
\end{proof}

Now we are ready for the
\begin{proof}[Proof of \Cref{prop_depthordinarypower_0}]
We may assume that $I\neq (0)$. Applying \Cref{lem_noncontainmentsandcontainments}(2) for $i=s-1$, we have that $I^{s-1}\nn \not\subseteq F^s$. Using $I\subseteq \mm^2$ and $I+\mm\nn\subseteq F$, it is easy to see that 
$$
I^{s-1}\nn \subseteq F^s:(\mm+\nn).
$$
Thus $T/F^s$ has a non-trivial socle, and hence $\depth(T/F^s)=0$.
\end{proof}

We can provide an explicit formula for the regularity of ordinary powers of $F$.
\begin{thm}
\label{thm_reg_ordinarypow}
Let $\kk$ be a field of arbitrary characteristic. Then for  all $s\ge 1$, there is an equality
\begin{align*}
\reg F^s &= \max \limits_{i\in [1,s]} \left\{2s,\reg(\mm^{s-i}I^i)+s-i, \reg(\nn^{s-i}J^i)+s-i\right\}.
\end{align*}
If moreover either $I$ or $J$ is non-zero, then for all $s\ge 1$, there is an equality
\[
\reg F^s = \max \limits_{i\in [1,s]} \left\{\reg(\mm^{s-i}I^i)+s-i, \reg(\nn^{s-i}J^i)+s-i\right\}.
\]
\end{thm}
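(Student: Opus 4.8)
The plan is to reduce the computation of $\reg F^s$ to the regularity of a single finite length module. Put $I'=I+\nn$, $J'=J+\mm$, and recall $\reg F^s=\reg(T/F^s)+1$. If $I=J=(0)$ then $F^s=(\mm\nn)^s$ and $\reg F^s=\reg_R\mm^s+\reg_S\nn^s=2s$ by \Cref{lem_tensor}, as asserted; so assume from now on that $I$ or $J$ is nonzero, and (the case $s=1$ being \Cref{lem_decomposition_F}(iii)) that $s\ge 2$. Since $F=I'\cap J'$, we have $F^s\subseteq I'^s\cap J'^s$, and I set $W:=(I'^s\cap J'^s)/F^s$. There are exact sequences of $T$-modules
\[
0\to W\to T/F^s\to T/(I'^s\cap J'^s)\to 0,\qquad 0\to T/(I'^s\cap J'^s)\to T/I'^s\oplus T/J'^s\to T/(I'^s+J'^s)\to 0,
\]
where $\reg(T/I'^s)=\max_{i\in[1,s]}\{\reg(R/I^i)+s-i\}$ (and similarly for $J'$) by \Cref{lem_depthregIplusn_ordinary}(ii), while $I'^s+J'^s\supseteq\mm^s+\nn^s$ makes $T/(I'^s+J'^s)$ Artinian with $\reg\le 2s-2$, exactly as in the proof of \Cref{thm_depth_reg}.

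Next I would note that $W$ has finite length: for a prime $P\ne\pp$, $I'+J'=\pp\not\subseteq P$, so (say) $I'_P=T_P$, hence $(I'^s\cap J'^s)_P=(J'^s)_P=(F^s)_P$ using $F_P=I'_P\cap J'_P=J'_P$; thus $W_P=0$. Therefore $W\subseteq H^0_\pp(T/F^s)$, which forces $\reg(T/F^s)\ge\reg W$, and together with the first exact sequence gives $\reg(T/F^s)=\max\{\reg W,\ \reg(T/(I'^s\cap J'^s))\}$. Also $W\ne(0)$: if, say, $I\ne(0)$, then \Cref{lem_noncontainmentsandcontainments}(2) with $i=s-1$ gives $I^{s-1}\nn\not\subseteq F^s$, while $I^{s-1}\nn\subseteq I'^s\cap J'^s$. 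Everything now hinges on the key claim
\[
\reg W=\max_{i\in[1,s]}\bigl\{\reg(\mm^{s-i}I^i)+s-i,\ \reg(\nn^{s-i}J^i)+s-i\bigr\}-1 .
\]
Granting this: by \Cref{lem_regmM}(i), $\reg(\mm^{s-i}I^i)=\max\{\reg I^i,\ \reg(I^i/\mm^{s-i}I^i)+1\}\ge\reg I^i$, and one of the two $i=s$ summands above equals $\reg(I^s)\ge 2s$ or $\reg(J^s)\ge 2s$ (as $I$ or $J$ is nonzero and $I,J\subseteq\mm^2,\nn^2$); hence the displayed maximum is $\ge\max\{\reg(I+\nn)^s,\reg(J+\mm)^s,2s\}$, so $\reg W$ is $\ge\max\{\reg(T/I'^s),\reg(T/J'^s),2s-1\}\ge\reg(T/(I'^s\cap J'^s))$. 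Thus $\reg(T/F^s)=\reg W$ and $\reg F^s=\reg W+1$ is exactly the asserted maximum. Finally the ``if moreover'' clause follows: when $I\ne(0)$, $\mm^{s-1}I$ is generated in degrees $\ge s+1$, so the $i=1$ summand is $\ge 2s$, and symmetrically for $J$; so the entry $2s$ is redundant.

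The main obstacle is the key claim on $\reg W$. To prove it I would filter $W$ by the images in $T/F^s$ of the ideals $(I^i\cap\mm^s)\nn^{s-i}$ and $(J^t\cap\nn^s)\mm^{s-t}$, whose sum is $I'^s\cap J'^s$ by the decomposition $I'^s\cap J'^s=\sum_{i,t}(I^i\cap\mm^{s-t})(J^t\cap\nn^{s-i})$ of \Cref{lem_intersect_formula} (applied to the filtrations $I^\bullet$, $J^\bullet$, together with the expansion $(I+\nn)^s=\sum_i I^i\nn^{s-i}$). For the upper bound, each successive quotient of this filtration is a quotient of a module of the form $\bigl((I^i\cap\mm^s)/\mm^{s-i}I^i\bigr)\otimes_\kk\nn^{s-i}$, cut down by error terms lying in $(\mm\nn)^{s-i}F^i$ (handled via \Cref{lem_decompose_ordinarypow} and the containment $(I^i\cap\mm^s)\nn^{s-i}\cap F^s\subseteq(\mm\nn)^{s-i}F^i$ of \Cref{lem_noncontainmentsandcontainments}(1)); the regularity of such a piece is bounded by $\reg\bigl((I^i\cap\mm^s)/\mm^{s-i}I^i\bigr)+s-i\le d(I^i)+2(s-i)-1$, which is $\le\max\{\ldots\}-1$ because $I^i\cap\mm^s$ and $\mm^{s-i}I^i$ agree in degrees $\ge d(I^i)+s-i$ while $\reg(\mm^{s-i}I^i)\ge d(\mm^{s-i}I^i)=d(I^i)+s-i$ (using \Cref{lem_regmM}(i)). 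For the lower bound, for an index $i$ realizing the maximum one exhibits explicit top-degree classes of $W$ inside the image of $(I^i\cap\mm^s)\nn^{s-i}$, using the colon identity $(F^s:_T\nn^{s-i})\cap(I^i\cap\mm^s)R=\mm^{s-i}I^i$ of \Cref{lem_noncontainmentsandcontainments}(1). The delicate point throughout is that $(I^i\cap\mm^s)\nn^{s-i}\cap F^s$ strictly contains $\mm^{s-i}I^i\nn^{s-i}$ in general (already for $R=\kk[x,z]$, $I=(x^2)$, $S=\kk[y]$, $J=(0)$, $s=3$, $i=1$), so one must keep track, along the $\Z^2$-grading of $T$, of precisely which homogeneous components of $(I^i\cap\mm^s)\nn^{s-i}$ survive modulo $F^s$; the containment in \Cref{lem_noncontainmentsandcontainments}(1) is exactly what makes this bookkeeping feasible.
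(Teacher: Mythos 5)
Your overall frame --- reduce to the finite-length module $W=(I'^s\cap J'^s)/F^s$ and extract the lower bound from the colon identity of \Cref{lem_noncontainmentsandcontainments}(1) --- is exactly the paper's (your $W$ is its $U_s/F^s$, and your lower-bound sketch is \Cref{lem_lowerbound_UsmodeFs}). But everything hinges on your ``key claim'' that $\reg W$ \emph{equals} $\max_i\{\reg(\mm^{s-i}I^i)+s-i,\ \reg(\nn^{s-i}J^i)+s-i\}-1$, and that claim is false. Take $J=(0)$, $s=2$, and $I\subseteq\mm^2$ generated in degree $2$ with $\reg I$ large (quadratic ideals of arbitrarily high regularity exist). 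Then $U_2=I^2+I\nn+\mm^2\nn^2$ and $F^2=I^2+I\mm\nn+\mm^2\nn^2$, so $W\cong I\nn/(I\mm\nn+I\nn^2)\cong (I/\mm I)\otimes_\kk(\nn/\nn^2)$ is concentrated in degrees $\le 3$, whereas your claimed value is at least $\reg(\mm I)\ge \reg I$, which can be arbitrarily large. (The theorem still holds because the large term is carried by $\reg(T/U_s)$, not by $\reg W$; only the inequality $\reg W\ge\max\{2s-1,\reg(I^i/\mm^{s-i}I^i)+s-i,\dots\}$ is true, and that is all the paper proves.) Since your entire upper bound for $\reg F^s$ rests on the exact value of $\reg W$, the upper bound is unproved. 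A secondary defect: the filtration you propose for $W$ uses only the boundary terms $(I^i\cap\mm^s)\nn^{s-i}$ and $\mm^{s-t}(J^t\cap\nn^s)$ of the double-sum decomposition, and these do \emph{not} sum to $I'^s\cap J'^s$: for $I=(x^2)$, $J=(y^2)$, $s=3$, the cross term $(I\cap\mm^{2})(J\cap\nn^{2})=(x^2y^2)$ lies in $U_3$ but in none of the boundary terms. Your derivation of $\reg W\ge\reg(T/(I'^s\cap J'^s))$ also relies on the false equality, though this step is dispensable since $\reg(T/F^s)=\max\{\reg W,\reg(T/U_s)\}$ holds anyway for a finite-length submodule.

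The repair, which is what the paper does, is to decouple the two bounds. The lower bound $\reg F^s\ge\max\{\reg W+1,\reg U_s\}\ge$ RHS follows from \Cref{lem_finitelength}, \Cref{lem_lowerbound_UsmodeFs} and \Cref{lem_regmM}(i), exactly along the lines you sketch for the ``lower bound'' half of your key claim. The upper bound is obtained by a completely separate argument (\Cref{lem_upperbound}): filter $F^s$ itself by the ideals $G_t=(I+\mm\nn)^s+\sum_{i=1}^t(\mm\nn)^{s-i}J^i$ and use the exact sequences of \Cref{lem_exactseq}, whose cokernels are $(\mm^{s-t}/\mm^{s-t+1})\otimes_\kk(\nn^{s-t}J^t)$; no upper bound on $\reg W$ is ever needed.
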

Note that by \cite[Theorem 5.1]{NgV19b}, \Cref{thm_reg_ordinarypow} holds true if either $\chara \kk=0$, or $I$ and $J$ are both monomial ideals. The proof employs Betti splitting arguments which are specific to the case where these extra assumptions are available. Here we give a completely different proof for the general case, without any sort of Betti splitting arguments. We will exploit special features of the zero-th local cohomology $H^0_\pp(T/F^s)$.
\begin{rem}
The first assertion in \Cref{thm_reg_ordinarypow} corrects a minor error in the statement of \cite[Theorem 5.1]{NgV19b}: To be precise, one has to assume that either $I$ or $J$ is non-zero in that result, otherwise the formula 
\[
\reg F^s = \max \limits_{i\in [1,s]} \left\{\reg(\mm^{s-i}I^i)+s-i, \reg(\nn^{s-i}J^i)+s-i\right\}.
\]
is incorrect. Indeed, when $I=(0), J=(0)$, we get $\reg F^s=2s$ for all $s$.
\end{rem}
First, we prove that $\reg F^s$ admits the upper bound given in \Cref{thm_reg_ordinarypow}.
\begin{lem}
\label{lem_upperbound}
For each $s\ge 1$, there is an inequality
\[
\reg F^s \le \max \limits_{i\in [1,s]} \left\{2s,\reg(\mm^{s-i}I^i)+s-i, \reg(\nn^{s-i}J^i)+s-i\right\}. 
\]
\end{lem}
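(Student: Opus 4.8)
The plan is to bound $\reg F^s$ by filtering $F^s$ through the chain of ideals coming from \Cref{lem_decompose_ordinarypow}, namely
$$
F^s = I^s + J^s + \mm\nn(I^{s-1}+J^{s-1}) + \cdots + (\mm\nn)^s = \sum_{i=0}^s (\mm\nn)^{s-i}(I^i+J^i),
$$
and to control the regularity of each graded piece using the tensor formula \Cref{lem_tensor}. First I would introduce the partial sums $G_j := \sum_{i=j}^s (\mm\nn)^{s-i}(I^i+J^i)$ for $0\le j\le s$, so that $G_0 = F^s$ and $G_s = I^s + J^s + (\mm\nn)^0 \cdot 0$ — actually it is cleaner to run the filtration the other way, setting $G_j = \sum_{i=0}^{j} (\mm\nn)^{s-i}(I^i+J^i)$ with $G_0 = (\mm\nn)^s$ and $G_s = F^s$, and then analyze the successive quotients $G_j/G_{j-1}$.

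The key computation is to identify each quotient $G_j/G_{j-1}$, or at least to bound its regularity. The quotient should be (a sum of homomorphic images of) $(\mm\nn)^{s-j}I^j/\bigl((\mm\nn)^{s-j}I^j \cap G_{j-1}\bigr)$ plus the analogous $J$-term; since $(\mm\nn)^{s-j}I^j = \mm^{s-j}I^j \otimes_\kk \nn^{s-j}$ as a $T$-module (using $T = R\otimes_\kk S$ and the disjointness of variables), \Cref{lem_tensor} gives $\reg_T\bigl(\mm^{s-j}I^j \otimes_\kk \nn^{s-j}\bigr) = \reg_R(\mm^{s-j}I^j) + \reg_S(\nn^{s-j}) = \reg(\mm^{s-j}I^j) + (s-j)$. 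The crossed term $(\mm\nn)^{s-j}J^j = \mm^{s-j}\otimes_\kk \nn^{s-j}J^j$ contributes $(s-j) + \reg(\nn^{s-j}J^j)$, and the $(\mm\nn)^s = \mm^s\otimes_\kk\nn^s$ term at the bottom contributes $s + s = 2s$. Then a routine induction on $j$ using the short exact sequences $0\to G_{j-1}\to G_j\to G_j/G_{j-1}\to 0$ and the standard fact $\reg G_j \le \max\{\reg G_{j-1},\, \reg(G_j/G_{j-1})\}$ (or the shifted version for the quotient, depending on how one sets up indices), together with the monotonicity $\reg(\mm^{s-j}I^j) \le \reg(\mm^{s-j+1}I^{j-1})$-type comparisons that fold $s-i$ shifts correctly, yields the claimed maximum.

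The main obstacle I expect is the precise identification of the subquotients $G_j/G_{j-1}$: the naive guess that $G_j/G_{j-1} \cong (\mm\nn)^{s-j}(I^j+J^j)/(\text{stuff in }G_{j-1})$ needs care because $(\mm\nn)^{s-j}I^j$ and $(\mm\nn)^{s-j}J^j$ overlap (their intersection is $(\mm\nn)^{s-j}I^jJ^j$ by \Cref{lem_intersection}), and because the intersection with $G_{j-1}$ must be computed. One clean way around this is to avoid computing the quotient exactly and instead only bound its regularity: $G_j/G_{j-1}$ is generated by the images of $(\mm\nn)^{s-j}I^j$ and $(\mm\nn)^{s-j}J^j$, so it is a quotient of $(\mm\nn)^{s-j}I^j \oplus (\mm\nn)^{s-j}J^j$, hence by the behaviour of regularity under surjections from modules generated in bounded degree — more carefully, using a short exact sequence presenting $G_j/G_{j-1}$ as a quotient of that direct sum — one gets $\reg(G_j/G_{j-1}) \le \max\{\reg_T((\mm\nn)^{s-j}I^j),\ \reg_T((\mm\nn)^{s-j}J^j)\}$. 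Care is needed because regularity does not behave simply under arbitrary surjections; the honest route is the short exact sequence $0 \to K \to (\mm\nn)^{s-j}I^j \oplus (\mm\nn)^{s-j}J^j \to G_j/G_{j-1} \to 0$ together with $\reg(\text{quotient}) \le \max\{\reg(\text{middle}), \reg(K)+1\}$ and a further induction or a direct argument that $\reg K$ is already dominated. I would handle this by choosing the filtration indices so that each step adds exactly one of the two summands at a time, making every subquotient a genuine quotient of a single tensor product module $\mm^{s-j}I^j\otimes_\kk\nn^{s-j}$ (resp. with $J$), whose regularity is pinned down by \Cref{lem_tensor}; then the bound propagates cleanly through the depth-lemma-style regularity inequality for the short exact sequences, and collecting all the contributions $2s$, $\reg(\mm^{s-i}I^i)+(s-i)$, $\reg(\nn^{s-i}J^i)+(s-i)$ for $i\in[1,s]$ gives exactly the asserted upper bound.
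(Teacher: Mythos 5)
Your overall strategy --- filter $F^s$ through the partial sums of the decomposition $F^s=\sum_{i=0}^s(\mm\nn)^{s-i}(I^i+J^i)$ from \Cref{lem_decompose_ordinarypow} and control each subquotient via \Cref{lem_tensor} --- is exactly the strategy of the paper. However, there is a genuine gap at the step you yourself flag as ``the main obstacle'': you never actually identify the subquotients $G_j/G_{j-1}$, and the workarounds you sketch do not close the hole. Presenting $G_j/G_{j-1}$ as a quotient of $(\mm\nn)^{s-j}I^j\oplus(\mm\nn)^{s-j}J^j$ (or of a single tensor product module) gives no bound on its regularity, since regularity can grow arbitrarily under surjections; and the ``honest route'' you mention requires bounding $\reg K+1$ for the kernel $K$, which is precisely the intersection $(\mm\nn)^{s-j}(I^j+J^j)\cap G_{j-1}$ that you have not computed. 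Everything hinges on the exact identity
\[
G_{t-1}\cap(\mm\nn)^{s-t}J^t=\mm^{s-t+1}\nn^{s-t}J^t,
\]
which the paper imports from \cite[Proposition 4.4]{NgV19b} (quoted as \Cref{lem_exactseq}); this is what makes the subquotient isomorphic to $(\mm^{s-t}/\mm^{s-t+1})\otimes_\kk(\nn^{s-t}J^t)$, a module whose regularity is pinned down as $\reg(\nn^{s-t}J^t)+s-t$ by \Cref{lem_tensor}. Without this computation (or an equivalent one) the induction does not run.

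A second, smaller point: the paper avoids the overlap between the $I$- and $J$-summands that worries you by never adding both at once. It sets $H=I+\mm\nn$, filters $F^s=H^s+\sum_{i=1}^s(\mm\nn)^{s-i}J^i$ one $J$-step at a time to obtain $\reg F^s\le\max_{i\in[1,s]}\{\reg H^s,\reg(\nn^{s-i}J^i)+s-i\}$, and then treats $H$ as the fiber product of $I$ with $(0)\subseteq S$ and repeats the same argument to bound $\reg H^s$ by $\max_{i\in[1,s]}\{2s,\reg(\mm^{s-i}I^i)+s-i\}$. If you adopt this two-stage filtration and supply the intersection computation above, your argument becomes the paper's proof.
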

For this, let us first recall the following result from \cite{NgV19b}.
\begin{lem}
\label{lem_exactseq}
 Denote $H=I+\mm\nn$. Let $s\ge 1$ be an integer. For each $1\le t\le s$, denote $G_t=H^s+\sum\limits_{i=1}^t (\mm\nn)^{s-i}J_i$, and $G_0=H^s$. The following statements hold.
 \begin{enumerate}[\quad \rm (1)]
  \item $F^s=H^s+\sum\limits_{i=1}^s(\mm\nn)^{s-i}J_i=G_s$.
  \item For each $1\le t\le s$, there is an equality $G_{t-1} \cap (\mm\nn)^{s-t}J^t=\mm^{s-t+1}\nn^{s-t}J^t$,  and an exact sequence
  \[
0\to G_{t-1} \to G_t \to \frac{(\mm\nn)^{s-t}J^t}{\mm^{s-t+1}\nn^{s-t}J^t} \to 0.
  \]
 \end{enumerate}
\end{lem}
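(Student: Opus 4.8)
The plan is to reduce everything to two routine expansions plus one structural observation. For part (1), I would combine \Cref{lem_decompose_ordinarypow}, which gives $F^s = (\mm\nn)^s + \sum_{i=1}^s (\mm\nn)^{s-i}(I^i + J^i)$, with the binomial expansion $H^s = (I+\mm\nn)^s = \sum_{j=0}^s I^j(\mm\nn)^{s-j} = (\mm\nn)^s + \sum_{i=1}^s (\mm\nn)^{s-i}I^i$. Recognizing that the terms $(\mm\nn)^s + \sum_{i=1}^s (\mm\nn)^{s-i}I^i$ in the first expansion constitute precisely $H^s$, we are left with $F^s = H^s + \sum_{i=1}^s (\mm\nn)^{s-i}J^i = G_s$; this step is purely formal.

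For part (2), the crucial observation is the containment $G_{t-1}\subseteq \mm^{s-t+1}T$, which holds because $H^s \subseteq \mm^s \subseteq \mm^{s-t+1}$ (since $t\ge 1$) and, for every $1\le i\le t-1$, $(\mm\nn)^{s-i}J^i \subseteq \mm^{s-i}\subseteq \mm^{s-t+1}$ (since $i\le t-1$). Granting this, I would compute $\mm^{s-t+1}T\cap (\mm\nn)^{s-t}J^t$ by first writing $(\mm\nn)^{s-t}J^t = \mm^{s-t}T\cap (\nn^{s-t}J^t)T$ via \Cref{lem_intersection}, so that, as $\mm^{s-t+1}T\subseteq \mm^{s-t}T$,
\[
\mm^{s-t+1}T\cap(\mm\nn)^{s-t}J^t = \mm^{s-t+1}T\cap(\nn^{s-t}J^t)T = \mm^{s-t+1}\nn^{s-t}J^t,
\]
the last equality again by \Cref{lem_intersection}. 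Intersecting with $G_{t-1}$ and invoking the containment above yields $G_{t-1}\cap (\mm\nn)^{s-t}J^t \subseteq \mm^{s-t+1}\nn^{s-t}J^t$. The reverse inclusion is elementary: $\mm^{s-t+1}\nn^{s-t}J^t \subseteq (\mm\nn)^{s-t}J^t$ is clear, and using $J\subseteq \nn^2$ to absorb one factor of $J$ gives $\mm^{s-t+1}\nn^{s-t}J^t \subseteq \mm^{s-t+1}\nn^{s-t+2}J^{t-1}\subseteq (\mm\nn)^{s-(t-1)}J^{t-1}\subseteq G_{t-1}$ (for $t=1$ this reads $\mm^s\nn^{s-1}J\subseteq (\mm\nn)^s\subseteq H^s$). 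This establishes the intersection formula $G_{t-1}\cap (\mm\nn)^{s-t}J^t = \mm^{s-t+1}\nn^{s-t}J^t$.

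Finally, since $G_t = G_{t-1}+(\mm\nn)^{s-t}J^t$, the second isomorphism theorem gives $G_t/G_{t-1}\cong (\mm\nn)^{s-t}J^t/(G_{t-1}\cap(\mm\nn)^{s-t}J^t)$, and substituting the intersection formula produces the asserted exact sequence. I do not anticipate a genuine obstacle: the only things requiring attention are the bookkeeping of the index inequalities (everything is governed by $1\le i\le t-1\le s-1$) and the systematic use of \Cref{lem_intersection} to replace intersections of ideals in disjoint sets of variables by products. All of the conceptual weight sits in the observation $G_{t-1}\subseteq \mm^{s-t+1}T$, which makes the intersection computation collapse.
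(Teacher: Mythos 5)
Your proof is correct. Note, however, that the paper itself does not prove part (1) or the intersection formula at all: it simply cites \cite[Proposition 4.4]{NgV19b} for both and then derives the exact sequence from $G_t=G_{t-1}+(\mm\nn)^{s-t}J^t$ exactly as you do. So what you have written is a self-contained replacement for that citation. The two computations you supply check out: for (1), combining \Cref{lem_decompose_ordinarypow} with the binomial identity $(I+\mm\nn)^s=\sum_{j=0}^s I^j(\mm\nn)^{s-j}$ does immediately give $F^s=H^s+\sum_{i=1}^s(\mm\nn)^{s-i}J^i$ (you correctly read the statement's $J_i$ as the typo it is for $J^i$); for (2), the observation $G_{t-1}\subseteq \mm^{s-t+1}T$ is exactly the right leverage, the reduction of $\mm^{s-t+1}T\cap(\mm\nn)^{s-t}J^t$ to $\mm^{s-t+1}\nn^{s-t}J^t$ via \Cref{lem_intersection} is valid since $\mm^{s-t+1}$ lives in $R$ and $\nn^{s-t}J^t$ lives in $S$, and the reverse inclusion $\mm^{s-t+1}\nn^{s-t}J^t\subseteq(\mm\nn)^{s-(t-1)}J^{t-1}\subseteq G_{t-1}$ correctly uses the hypothesis $J\subseteq\nn^2$ (with the $t=1$ case landing in $(\mm\nn)^s\subseteq H^s$). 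The benefit of your version is that the lemma becomes independent of \cite{NgV19b}; the cost is a page of routine verification that the authors chose to outsource.
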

\begin{proof}
Statement (1) and the first assertion of (2) follow from \cite[Proposition 4.4]{NgV19b}. Since  $G_t=G_{t-1}+(\mm\nn)^{s-t}J^t$, the exact sequence is a consequence of the first assertion of (2).
\end{proof}

\begin{proof}[Proof of \Cref{lem_upperbound}]
For each $1\le t\le s$, by \Cref{lem_exactseq}, there is an exact sequence
  \[
0\to G_{t-1} \to G_t \to \frac{(\mm\nn)^{s-t}J^t}{\mm^{s-t+1}\nn^{s-t}J^t} \cong \frac{\mm^{s-t}}{\mm^{s-t+1}}\otimes_\kk (\nn^{s-t}J^t) \to 0.
  \]
In particular, thanks to \Cref{lem_tensor}, 
\begin{align*}
\reg G_t &\le \max \left\{\reg G_{t-1}, \reg \left( \frac{\mm^{s-t}}{\mm^{s-t+1}}\otimes_\kk (\nn^{s-t}J^t)\right) \right\}\\
         &= \max \left\{\reg G_{t-1}, \reg \left( \frac{\mm^{s-t}}{\mm^{s-t+1}}\right )+\reg (\nn^{s-t}J^t) \right\}\\
         &=\max \left\{\reg G_{t-1}, \reg (\nn^{s-t}J^t)+s-t \right\}.
\end{align*}
Since $F^s=G_s, H^s=G_0$ per \Cref{lem_exactseq}, using the last chain repeatedly, we get
\begin{align*}
\reg F^s =\reg G_s &\le \max_{t\in [1,s]}\left\{\reg G_0, \reg (\nn^{s-t}J^t)+s-t \right\}\\ 
                   &= \max_{i\in [1,s]}\left\{\reg H^s, \reg (\nn^{s-i}J^i)+s-i \right\}.
\end{align*}
Now $H=I+\mm\nn$ can be seen as the fiber product of $I\subseteq R$ and $(0)\subseteq S$. Hence by similar arguments, we get
\begin{align*}
\reg H^s & \le \max_{i\in [1,s]}\left\{\reg (\mm\nn)^s, \reg (\mm^{s-i}I^i)+s-i \right\} =\max_{i\in [1,s]}\left\{2s, \reg (\mm^{s-i}I^i)+s-i \right\}.
\end{align*}
Combining the last two displays, we obtain the desired upper bound for $\reg F^s$.
\end{proof}

Next, we prove the (harder) reverse inequality in \Cref{thm_reg_ordinarypow}, namely the lower bound for $\reg F^s$. For each $s\ge 1$, denote $U_s=(I+\nn)^s\cap (J+\mm)^s$. Note that $U_1=I+J+\mm \nn=F$. The ideal $U_s$ is crucial to the analysis of the lower bound for $\reg F^s$, which is done by the Lemmas \ref{lem_decomposition_G} -- \ref{lem_lowerbound_UsmodeFs}.

\begin{lem}
\label{lem_decomposition_G}
 For each $s\ge 1$, there is an equality
\[
U_s= \sum_{i=0}^s\sum_{t=0}^s (I^i\cap \mm^{s-t})(J^t\cap \nn^{s-i}).
\]
\end{lem}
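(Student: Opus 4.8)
The plan is to recognize Lemma \ref{lem_decomposition_G} as a direct instance of the general intersection formula in Lemma \ref{lem_intersect_formula}, applied to the filtrations given by the ordinary powers of $I$ and $J$. The only preliminary input needed is the elementary binomial expansion of a power of a sum of two ideals.

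First I would record that the ordinary powers form filtrations: in $R$ we have $R=I^0\supseteq I^1\supseteq I^2\supseteq\cdots$, and in $S$ we have $S=J^0\supseteq J^1\supseteq J^2\supseteq\cdots$ (with the usual convention that the zeroth power is the whole ring, extended to $T$). Next I would observe that, by distributivity of ideal multiplication, for any ideal $I$ of $R$ and any $s\ge 1$ there is an equality in $T$
\[
(I+\nn)^s=\sum_{i=0}^s I^i\nn^{s-i},
\]
since $(I+\nn)^s$ is generated by products $a_1\cdots a_s$ with each $a_k\in I\cup\nn$, and any such product lies in $I^i\nn^{s-i}$ for the appropriate $i$; the reverse inclusion is clear. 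Symmetrically, $(J+\mm)^s=\sum_{t=0}^s J^t\mm^{s-t}$. No hypotheses on $I$, $J$, or the characteristic of $\kk$ are required here.

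It then remains to feed these into Lemma \ref{lem_intersect_formula} with $K_\bullet=I^{\bullet}$ and $L_\bullet=J^{\bullet}$, which immediately yields
\[
U_s=(I+\nn)^s\cap(J+\mm)^s=\left(\sum_{i=0}^s I^i\nn^{s-i}\right)\bigcap\left(\sum_{t=0}^s J^t\mm^{s-t}\right)=\sum_{i=0}^s\sum_{t=0}^s (I^i\cap\mm^{s-t})(J^t\cap\nn^{s-i}),
\]
which is exactly the claimed identity. There is no real obstacle in this argument; the only points requiring a modicum of care are the bookkeeping with the zeroth powers $I^0=J^0=T$ (so that, e.g., the terms with $i=0$ or $t=0$ reproduce the pure $\mm$- and $\nn$-power contributions) and checking that the descending-chain condition in the definition of a filtration is literally satisfied by the ordinary powers, both of which are immediate.
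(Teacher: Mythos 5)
Your proposal is correct and matches the paper's own proof, which likewise obtains the identity by applying \Cref{lem_intersect_formula} to the filtrations $I^\bullet$ and $J^\bullet$ of ordinary powers. The binomial expansion $(I+\nn)^s=\sum_{i=0}^s I^i\nn^{s-i}$ that you spell out is the (implicit) step needed to identify $U_s$ with the left-hand side of \eqref{eq_decompose_intersect}, and your treatment of it is fine.
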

\begin{proof}
Apply \Cref{lem_intersect_formula} for the filtration of ordinary powers $I^\bullet$ and $J^\bullet$. 
\end{proof}

\begin{lem}
\label{lem_regUs}
For each $s\ge 1$, there is an equality
\[
\reg U_s = \max \limits_{i\in [1,s]} \left\{2s,\reg(I^i)+s-i, \reg(J^i)+s-i\right\}.
\]
\end{lem}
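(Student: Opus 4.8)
The plan is to squeeze $\reg U_s$ between matching bounds extracted from the short exact sequence
\[
0\longrightarrow U_s\longrightarrow (I+\nn)^s\oplus (J+\mm)^s\longrightarrow \mathfrak{a}\longrightarrow 0,
\]
where $\mathfrak{a}:=(I+\nn)^s+(J+\mm)^s$, the first arrow is the diagonal and the second is the difference map. The only external inputs are the regularity formula for $(I+\nn)^s$ and $(J+\mm)^s$ provided by \Cref{lem_depthregIplusn_ordinary}(ii), \Cref{lem_tensor}, and \Cref{lem_intersection}.

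The first step is to bound $\reg\mathfrak{a}$. Since $\nn^s\subseteq (I+\nn)^s$ and $\mm^s\subseteq (J+\mm)^s$ we have $\mm^s+\nn^s\subseteq\mathfrak{a}$, so $T/\mathfrak{a}$ is an Artinian quotient of $T/(\mm^s+\nn^s)\cong (R/\mm^s)\otimes_\kk(S/\nn^s)$. The regularity of an Artinian graded module is its top nonvanishing degree, so \Cref{lem_tensor} gives $\reg(T/\mathfrak{a})\le\reg(R/\mm^s)+\reg(S/\nn^s)=(s-1)+(s-1)=2s-2$, whence $\reg\mathfrak{a}\le 2s-1$. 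Feeding this into the exact sequence yields the upper bound $\reg U_s\le\max\{\,2s,\ \reg(I+\nn)^s,\ \reg(J+\mm)^s\,\}$. Then \Cref{lem_depthregIplusn_ordinary}(ii), together with the standard relation $\reg\mathfrak{c}=\reg(T/\mathfrak{c})+1$ for a proper nonzero homogeneous ideal $\mathfrak{c}$ and $\reg I^i=\reg(R/I^i)+1$ when $I\neq(0)$ (respectively $\reg\nn^s=s$ when $I=(0)$), identifies this maximum with $\max_{i\in[1,s]}\{2s,\reg(I^i)+s-i,\reg(J^i)+s-i\}$; one verifies this identification in the two cases $I=(0)$ and $I\neq(0)$ separately, and symmetrically for $J$.

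For the lower bound, first dispose of the degenerate case $I=J=(0)$: then $U_s=\nn^s\cap\mm^s=\mm^s\nn^s$ by \Cref{lem_intersection}, so $\reg U_s=2s$ by \Cref{lem_tensor}, which is the asserted value. Otherwise assume $I\neq(0)$ (the case $J\neq(0)$ is symmetric). Because $I\subseteq\mm^2$, every minimal generator of $I^s$ has degree $\ge 2s$, so $\reg I^s\ge d(I^s)\ge 2s$; by \Cref{lem_depthregIplusn_ordinary}(ii), $\reg(I+\nn)^s\ge\reg I^s\ge 2s$. Running the exact sequence the other way gives $\max\{\reg(I+\nn)^s,\reg(J+\mm)^s\}\le\max\{\reg U_s,\reg\mathfrak{a}\}\le\max\{\reg U_s,\,2s-1\}$, and since $\reg(I+\nn)^s\ge 2s>2s-1$ this forces $\reg U_s\ge\reg(I+\nn)^s\ge 2s$. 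If moreover $J\neq(0)$ the same reasoning gives $\reg U_s\ge\reg(J+\mm)^s$, while if $J=(0)$ then $\reg(J+\mm)^s=\reg\mm^s=s<2s\le\reg U_s$ trivially. Hence $\reg U_s\ge\max\{2s,\reg(I+\nn)^s,\reg(J+\mm)^s\}$, matching the upper bound, and the same translation as before concludes.

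The main obstacle is the lower bound: a short exact sequence only yields an \emph{upper} bound on the regularity of the sub-object $U_s$, so the estimate must be turned around. The crux is that the ``correction term'' $\reg\mathfrak{a}\le 2s-1$ lies \emph{strictly} below the degree-$2s$ minimal generator of $(I+\nn)^s$ coming from $I^s\subseteq\mm^{2s}$, and this strict inequality is precisely what upgrades the exact-sequence estimate to $\reg U_s\ge\reg(I+\nn)^s$. (When $\dim(R/I)>0$ and $\dim(S/J)>0$ one could instead invoke \Cref{lem_mingen_deg2s} to exhibit a degree-$2s$ minimal generator of $U_s$ directly, but the argument above has the merit of covering the $\mm$-primary and $\nn$-primary cases uniformly.)
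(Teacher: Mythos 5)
Your proposal is correct and follows essentially the same route as the paper: the Mayer–Vietoris sequence linking $U_s$, $(I+\nn)^s\oplus(J+\mm)^s$ and their sum, the bound $\reg\bigl(T/((I+\nn)^s+(J+\mm)^s)\bigr)\le 2s-2$ for the Artinian cokernel, the strict inequality against $\reg(I+\nn)^s\ge\reg I^s\ge 2s$ coming from $I\subseteq\mm^2$, and the translation via \Cref{lem_depthregIplusn_ordinary}(ii). The paper only sketches these steps (deferring to the proof of \Cref{thm_depth_reg}(ii)); your write-up supplies the same details explicitly, so no further comment is needed.
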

\begin{proof}
We consider the following exact sequence
\[
0\to \frac{T}{U_s} \to \frac{T}{(I+\nn)^s}\bigoplus  \frac{T}{(J+\mm)^s} \to \frac{T}{(I+\nn)^s+(J+\mm)^s}\to 0.
\]
The argument for the desired equality  is similar to the proof of \Cref{thm_depth_reg}(ii), taking \Cref{lem_depthregIplusn_ordinary} into account. We give only a sketch here.

If $I=J=(0)$, then $U_s=\mm^s\nn^s$ has regularity $2s$, as expected. Assume, without loss of generality, that $I\neq (0)$. Then we can show that the artinian module $T/\left((I+\nn)^s+(J+\mm)^s\right)$ has regularity at most $2s-2$. On the other hand, per \Cref{lem_depthregIplusn_ordinary},
\[
\reg \frac{T}{(I+\nn)^s} \ge \reg \frac{R}{I^s}=\reg I^s-1 \ge 2s-1.
\]
Hence the last exact sequence yields
\[
\reg \frac{T}{U_s} =\max \left\{\reg \frac{T}{(I+\nn)^s},\reg \frac{T}{(J+\mm)^s}\right\} \ge 2s-1.
\]
Invoking \Cref{lem_depthregIplusn_ordinary}, we are done.
\end{proof}

\begin{lem}
\label{lem_finitelength}
For each $s\ge 1$, the following statements hold.
\begin{enumerate}[\quad \rm (1)]
 \item There are containments 
$$
\pp^{2s-1}U_s \subseteq F^s\subseteq U_s.
$$
In particular, $U_s/F^s$ has finite length over $T$.
\item There is an equality
\[
\reg F^s= \max_{i\in [1,s]} \left\{\reg(U_s/F^s)+1, 2s, \reg(I^i)+s-i, \reg(J^i)+s-i\right\}.
\]
\end{enumerate}
\end{lem}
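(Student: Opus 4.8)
The plan is to prove the two containments in (1) directly from the decomposition formulas \Cref{lem_decomposition_G} and \Cref{lem_decompose_ordinarypow}, to read off the finite length of $U_s/F^s$, and then to obtain (2) by feeding the short exact sequence $0\to U_s/F^s\to T/F^s\to T/U_s\to 0$ into the long exact sequence of local cohomology.

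For (1), the inclusion $F^s\subseteq U_s$ is immediate: since $J\subseteq\nn^2$ and $\mm\nn\subseteq\nn$ we have $F=I+J+\mm\nn\subseteq I+\nn$, and symmetrically $F\subseteq J+\mm$, so $F^s\subseteq (I+\nn)^s\cap(J+\mm)^s=U_s$. For the inclusion $\pp^{2s-1}U_s\subseteq F^s$, I would write $\pp^{2s-1}=\sum_{a+b=2s-1}\mm^a\nn^b$ and, via \Cref{lem_decomposition_G}, $U_s=\sum_{0\le i,t\le s}(I^i\cap\mm^{s-t})(J^t\cap\nn^{s-i})$; \Cref{lem_decompose_ordinarypow} gives $(\mm\nn)^{s-k}I^k\subseteq F^s$ and $(\mm\nn)^{s-k}J^k\subseteq F^s$ for all $0\le k\le s$ (with $I^0=R$, $J^0=S$). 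It then suffices to show $\mm^a\nn^b(I^i\cap\mm^{s-t})(J^t\cap\nn^{s-i})\subseteq F^s$ whenever $a+b=2s-1$ and $0\le i,t\le s$. The numerical point is that $a+b=2s-1$ forces $a\ge s-i$ or $b\ge s-t$, since otherwise $a\le s-i-1$ and $b\le s-t-1$ would yield $2s-1=a+b\le 2s-2-i-t\le 2s-2$. If $a\ge s-i$, then $I^i\cap\mm^{s-t}\subseteq I^i$ and $J^t\cap\nn^{s-i}\subseteq\nn^{s-i}$ give
\[
\mm^a\nn^b(I^i\cap\mm^{s-t})(J^t\cap\nn^{s-i})\subseteq\mm^a\nn^b I^i\nn^{s-i}=\mm^a\nn^{b+s-i}I^i\subseteq\mm^{s-i}\nn^{s-i}I^i=(\mm\nn)^{s-i}I^i\subseteq F^s,
\]
and the case $b\ge s-t$ is symmetric. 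Hence $\pp^{2s-1}(U_s/F^s)=0$, so the finitely generated $T$-module $U_s/F^s$ is supported only at $\pp$ and has finite length.

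For (2), I would apply $H^{\bullet}_{\pp}(-)$ to the sequence $0\to U_s/F^s\to T/F^s\to T/U_s\to 0$. Because $U_s/F^s$ has finite length, $H^j_\pp(U_s/F^s)=0$ for $j\ge 1$ and $H^0_\pp(U_s/F^s)=U_s/F^s$, so the long exact sequence produces isomorphisms $H^j_\pp(T/F^s)\cong H^j_\pp(T/U_s)$ for all $j\ge 1$ together with a short exact sequence $0\to U_s/F^s\to H^0_\pp(T/F^s)\to H^0_\pp(T/U_s)\to 0$. Using $\reg M=\max_j\bigl(j+\max\{n:H^j_\pp(M)_n\neq 0\}\bigr)$, the first relation matches all contributions with $j\ge 1$, while the short exact sequence in degree $0$ shows the $j=0$ contribution of $T/F^s$ equals $\max\{\reg(U_s/F^s),\,\max\{n:H^0_\pp(T/U_s)_n\neq 0\}\}$; here $\reg(U_s/F^s)$ is just the top nonzero degree of $U_s/F^s$ since that module is finite length. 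Combining, $\reg(T/F^s)=\max\{\reg(U_s/F^s),\reg(T/U_s)\}$. Finally $\reg F^s=\reg(T/F^s)+1$ and $\reg(T/U_s)=\reg U_s-1$, and $\reg U_s=\max_{i\in[1,s]}\{2s,\reg I^i+s-i,\reg J^i+s-i\}$ by \Cref{lem_regUs}; substituting yields the asserted formula.

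I expect the main obstacle to be the containment $\pp^{2s-1}U_s\subseteq F^s$ of part (1): this is the step where the disjoint-variables structure and both decomposition lemmas are genuinely used, and a little care is needed with the degenerate cases $i=0$ or $t=0$ (and with tracking bidegrees). Granting that containment, part (2) is a formal consequence of the finite-length property and the long exact sequence of local cohomology, together with the regularity of $U_s$ already recorded in \Cref{lem_regUs}.
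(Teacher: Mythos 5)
Your proposal is correct and follows essentially the same route as the paper: part (1) via the decomposition of $U_s$ from \Cref{lem_decomposition_G} together with $(\mm\nn)^{s-k}I^k,(\mm\nn)^{s-k}J^k\subseteq F^s$ from \Cref{lem_decompose_ordinarypow}, and part (2) via the short exact sequence $0\to U_s/F^s\to T/F^s\to T/U_s\to 0$ combined with \Cref{lem_regUs}. The only cosmetic differences are a slightly finer case split in (1) (the paper simply uses that $a\ge s$ or $b\ge s$ when $a+b=2s-1$) and that you derive the regularity identity $\reg(T/F^s)=\max\{\reg(U_s/F^s),\reg(T/U_s)\}$ from the local cohomology long exact sequence rather than citing it.
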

\begin{proof}
(1): It suffices to establish the first assertion, in the case $s\ge 2$. Since $F=(I+\nn)\cap (J+\mm)$, $F^s\subseteq (I+\nn)^s\cap (J+\mm)^s=U_s$. We are left with the containment
\[
 \pp^{2s-1}U_s \subseteq F^s.
\]
By \Cref{lem_decomposition_G}, we have to show that for each $0\le i,t\le s$ and each $0\le j\le 2s-1$, 
\[
\mm^j\nn^{2s-j-1}(I^i\cap \mm^{s-t})(J^t\cap \nn^{s-i})\subseteq F^s.
\]
Either $j\ge s$ or $2s-j-1\ge s$. In the first case,
\[
\mm^j\nn^{2s-j-1}(I^i\cap \mm^{s-t})(J^t\cap \nn^{s-i}) \subseteq \mm^s I^i\nn^{s-i} \subseteq I^i(\mm\nn)^{s-i} \subseteq F^s.
\]
Arguing similarly for the remaining case, we conclude the proof of (1).

(2): From the exact sequence
\[
0\to \frac{U_s}{F^s} \to \frac{T}{F^s} \to \frac{T}{U_s} \to 0
\]
and the fact that $U_s/F^s$ has finite length, we get (see \cite[Corollary 20.19]{Eis95}) that
\[
\reg(T/F^s)=\max \left\{\reg(U_s/F^s),\reg(T/U_s) \right\}.
\]
Therefore $\reg F^s=\max \left\{\reg(U_s/F^s)+1,\reg(U_s) \right\}$, and we are done by invoking \Cref{lem_regUs}.
\end{proof}

\begin{lem}
\label{lem_regUsmodeFs_ge2sminus1}
Assume that $I\neq (0)$. Then for every $s\ge 2$, there is an inequality 
\[
\reg \frac{U_s}{F^s} \ge 2s-1.
\]
\end{lem}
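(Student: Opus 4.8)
The plan is to exhibit an explicit element of $U_s$ whose image in $U_s/F^s$ generates a nonzero socle element in degree $2s-1$, which forces $\reg(U_s/F^s)\ge 2s-1$ since $U_s/F^s$ has finite length over $T$ by \Cref{lem_finitelength}(1). The natural candidate comes from \Cref{lem_noncontainmentsandcontainments}(2): taking $i=s-1$ there (legitimate since $I\neq(0)$ and $s\ge 2$), we get a monomial-type product sitting in $I^{s-1}\mm^{0}\nn^{1}=I^{s-1}\nn$, wait—more precisely the non-containment $I^i\mm^{s-i-1}\nn^{s-i}\not\subseteq F^s$; I would instead look for an element of $U_s$ of the smallest possible degree that fails to lie in $F^s$. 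Recall $U_s=(I+\nn)^s\cap(J+\mm)^s$, and by \Cref{lem_decomposition_G} it contains $(I^{s-1}\cap\mm^{1})(J^{0}\cap\nn^{1})=I^{s-1}\nn$ (using $I\subseteq\mm^2$ so $\mm^1\supseteq I^{s-1}$, and $J^0=S$). Pick a minimal generator $u$ of $I^{s-1}$, of some degree $e\ge 2(s-1)$, and a variable $y$ of $S$; then $uy\in U_s$ has degree $e+1$.

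First I would verify that $uy\notin F^s$: by \Cref{lem_noncontainmentsandcontainments}(2) with $i=s-1$ we have $I^{s-1}\nn\not\subseteq F^s$, and in fact one wants to locate a \emph{specific} non-member; here I would argue bidegree-wise. An element of $F^s$ lying in bidegree $(\ast,1)$ must, by the decomposition $F^s=I^s+J^s+\mm\nn(I^{s-1}+J^{s-1})+\cdots+(\mm\nn)^s$ of \Cref{lem_decompose_ordinarypow}, come from the term $\mm\nn(I^{s-1}+\cdots)$; matching the $\nn$-degree $1$ it must lie in $\mm\nn I^{s-1}+\mm\nn J^{s-1}\subseteq \mm I^{s-1}\nn+\cdots$, so its $R$-component of degree $e$ (the part with $\nn$-degree exactly $1$) lies in $\mm I^{s-1}$; choosing $u\in I^{s-1}$ a \emph{minimal} generator ensures $u\notin\mm I^{s-1}$, so $uy\notin F^s$. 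Second, I would check the socle condition $\pp\cdot(uy)\subseteq F^s$, i.e. $\mm uy\subseteq F^s$ and $\nn uy\subseteq F^s$. For $\nn uy$: $uy\nn\subseteq I^{s-1}\nn^2\subseteq I^{s-1}(\mm\nn)\cdot\nn\subseteq\mm\nn F^{s-1}\cdot(\text{stuff})$—more carefully $I^{s-1}\nn^2=(I\,\mm\nn)I^{s-2}\nn\subseteq\mm\nn F^{s-1}\subseteq F^s$ using $I\subseteq\mm^2$ and $I,\mm\nn\subseteq F$. For $\mm uy$: $\mm u\subseteq\mm I^{s-1}$, hence $\mm uy\subseteq \mm I^{s-1}\nn=(I\,\mm\nn)I^{s-2}\subseteq\mm\nn F^{s-1}\subseteq F^s$ (again $I\subseteq\mm^2$). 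Thus $uy$ represents a nonzero socle element of $U_s/F^s$ in degree $e+1$.

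Finally I would convert this into the regularity bound. Since $U_s/F^s$ has finite length, $\reg(U_s/F^s)=\sup\{j : (U_s/F^s)_j\neq 0\}=\max\{j : H^0_\pp(U_s/F^s)_j\neq 0\}$, and the socle is contained in $H^0_\pp$; the socle element we produced lives in degree $e+1$ where $e=d$ is the degree of our chosen minimal generator $u$ of $I^{s-1}$. To get the clean bound $2s-1$ I would simply take $u$ to be \emph{any} minimal generator of $I^{s-1}$: since $I\subseteq\mm^2$ we have $\deg u\ge 2(s-1)=2s-2$, so $\deg(uy)\ge 2s-1$, giving $\reg(U_s/F^s)\ge 2s-1$.

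The main obstacle I anticipate is the verification that $uy\notin F^s$: the bidegree bookkeeping against the full expansion in \Cref{lem_decompose_ordinarypow} needs care, and one must be sure the minimal-generator hypothesis on $u$ is genuinely used (it is what separates $uy$ from the term $\mm\nn I^{s-1}$ of $F^s$ in the relevant bidegree). Everything else — membership $uy\in U_s$ and the two socle containments — is a short computation with $I\subseteq\mm^2$, $J\subseteq\nn^2$, \Cref{lem_intersection}, and \Cref{lem_decompose_ordinarypow}.
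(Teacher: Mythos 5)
Your proof is correct and follows essentially the same route as the paper: both hinge on producing an element of $I^{s-1}\nn\subseteq U_s$ of degree at least $2s-1$ that avoids $F^s$. The paper obtains the non-containment $I^{s-1}\nn\not\subseteq F^s$ by citing \Cref{lem_noncontainmentsandcontainments}(2) with $i=s-1$, whereas you re-derive it directly by a bidegree argument (in bidegree $(\ast,1)$ only the term $\mm\nn I^{s-1}$ of the expansion in \Cref{lem_decompose_ordinarypow} survives, and a minimal generator $u$ of $I^{s-1}$ does not lie in $\mm I^{s-1}$); this is a valid and self-contained shortcut. Two cosmetic points: the term of \Cref{lem_decomposition_G} containing $I^{s-1}\nn$ is the one with $i=s-1$, $t=0$, namely $(I^{s-1}\cap\mm^{s})(J^{0}\cap\nn)=I^{s-1}\nn$ (using $I^{s-1}\subseteq\mm^{2s-2}\subseteq\mm^{s}$), not $(I^{s-1}\cap\mm)(J^{0}\cap\nn)$; and the socle verification is superfluous, since for the finite-length module $U_s/F^s$ any nonzero homogeneous element of degree at least $2s-1$ already yields $\reg(U_s/F^s)\ge 2s-1$, which is exactly how the paper concludes.
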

\begin{proof}
Since $I\neq (0)$, applying \Cref{lem_noncontainmentsandcontainments}(2) for $i=s-1$, we get $I^{s-1}\nn \not\subseteq F^s$. The exact sequence of artinian modules
\[
0\to \frac{I^{s-1}\nn +F^s}{F^s} \to \frac{U_s}{F^s}
\]
yields the chain
\[
\reg  \frac{U_s}{F^s} \ge \reg \frac{I^{s-1}\nn +F^s}{F^s} \ge 2s-1.
\]
The last inequality follows from the fact that $I^{s-1}\nn \subseteq \mm^{2s-2}\nn$, which is generated in degree $2s-1$.
\end{proof}

The key step in the proof of \Cref{thm_reg_ordinarypow} is accomplished by

\begin{lem}
\label{lem_lowerbound_UsmodeFs}
Assume that $I \neq (0)$. Then for every integer $s\ge 2$, there is an inequality
\[
\reg \frac{U_s}{F^s} \ge \max_{i\in [1,s]} \left\{2s-1, \reg \frac{I^i}{\mm^{s-i}I^i}+s-i \right\}.
\]
\end{lem}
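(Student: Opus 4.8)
The plan is, for each fixed $i\in[1,s]$, to produce an explicit nonzero homogeneous element of $U_s/F^s$ of degree $\reg(I^i/\mm^{s-i}I^i)+s-i$. Since $U_s/F^s$ has finite length over $T$ by \Cref{lem_finitelength}(1), its regularity is exactly its top nonzero degree, so such an element will force $\reg(U_s/F^s)\ge\reg(I^i/\mm^{s-i}I^i)+s-i$; maximizing over $i$ and combining with \Cref{lem_regUsmodeFs_ge2sminus1} will then give the claim. I would first dispose of $i=s$, which only contributes $-\infty$, and assume $1\le i\le s-1$, so that $s-i\ge1$ and, since $I\neq(0)$ and $R$ is a domain, $M:=I^i/\mm^{s-i}I^i$ is a nonzero finite-length graded $R$-module; set $e:=\reg M$, the top degree in which $M$ is nonzero.

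Next I would split on the size of $e$. If $e+s-i\le 2s-1$, the desired inequality is already contained in \Cref{lem_regUsmodeFs_ge2sminus1}, so I may assume $e+s-i\ge 2s$, i.e.\ $e\ge s+i$; in particular $e>s$. Pick a homogeneous $g\in(I^i)_e$ with $g\notin\mm^{s-i}I^i$ (available because $e$ is the top nonzero degree of $M$); since $\deg g=e>s$, automatically $g\in\mm^s$, so $g\in I^i\cap\mm^s$. The main step is to prove $g\,\nn^{s-i}\not\subseteq F^s$: if it were contained in $F^s$, then $g$ would lie in $(F^s:_T\nn^{s-i})\cap(I^i\cap\mm^s)R$, which equals $\mm^{s-i}I^i$ by \Cref{lem_noncontainmentsandcontainments}(1), contradicting the choice of $g$. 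Hence $gv\notin F^s$ for some degree-$(s-i)$ monomial $v$ of $S$. Since $(I^i\cap\mm^s)\nn^{s-i}$ appears in the decomposition of $U_s$ from \Cref{lem_decomposition_G} (it is the summand with first index $i$ and second index $0$), this same $gv$ lies in $U_s$; as $\deg(gv)=e+s-i$, we get $(U_s/F^s)_{e+s-i}\neq0$, whence $\reg(U_s/F^s)\ge e+s-i=\reg M+s-i$, as required.

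The only step with real content is the non-containment $g\,\nn^{s-i}\not\subseteq F^s$, and this is where the colon identity of \Cref{lem_noncontainmentsandcontainments}(1) --- the technical core of this section --- does all the work; everything else is bookkeeping with the decompositions of $U_s$ and $F^s$ and with the finite-length property of $U_s/F^s$, so I do not expect a further obstacle once that lemma is available.
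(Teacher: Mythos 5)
Your proposal is correct and follows essentially the same route as the paper: both arguments reduce everything to the colon identity $(F^s:_T\nn^{s-i})\cap(I^i\cap\mm^s)R=\mm^{s-i}I^i$ from \Cref{lem_noncontainmentsandcontainments}(1) and then exhibit a witness in $(I^i\cap\mm^s)\nn^{s-i}\subseteq U_s$ of the right degree. The only cosmetic difference is bookkeeping: the paper passes through the short exact sequence $0\to (I^i\cap\mm^s)/\mm^{s-i}I^i\to I^i/\mm^{s-i}I^i\to (I^i+\mm^s)/\mm^s\to 0$ to see that the low-degree part is absorbed by the $2s-1$ bound, whereas you achieve the same absorption by case-splitting on whether $\reg(I^i/\mm^{s-i}I^i)+s-i\le 2s-1$ and noting that otherwise the top-degree witness automatically lies in $\mm^s$.
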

\begin{proof}
That  $\reg(U_s/F^s)\ge 2s-1$ follows from \Cref{lem_regUsmodeFs_ge2sminus1} thanks to the hypothesis $I\neq (0)$. It remains to show that for every $1\le i\le s$,
\[
\reg \frac{I^i}{\mm^{s-i}I^i}+s-i \le \reg \frac{U_s}{F^s}.
\]
We may assume that $i\le s-1$, since otherwise the left-hand side is $-\infty$.

Since $\mm^{s-i}I^i\subseteq I^i\cap \mm^s$, there is an exact sequence of artinian $R$-modules
\[
0\to \frac{I^i\cap \mm^s}{\mm^{s-i}I^i} \to \frac{I^i}{\mm^{s-i}I^i} \to \frac{I^i+\mm^s}{\mm^s} \to 0.
\]
Hence denoting $W_s=\dfrac{I^i\cap \mm^s}{\mm^{s-i}I^i}$,
\begin{align*}
s-i+\reg \frac{I^i}{\mm^{s-i}I^i} &=\max \left\{\reg W_s+s-i, \reg \frac{I^i+\mm^s}{\mm^s}+s-i \right\}\\
                                  &\le \max \{\reg W_s+s-i, 2s-i-1\}. 
\end{align*}
Since $2s-i-1 < 2s-1\le \reg (U_s/F^s)$, it remains to show that
\[
\reg W_s+ s-i \le \reg \frac{U_s}{F^s}.
\]
It is harmless to assume that $W_s\neq 0$. Denote $d=\reg W_s$. Since $W_s$ has finite length, there exists a homogeneous element $x\in (I^i\cap \mm^s) \setminus (\mm^{s-i}I^i)$ that is of degree $d$. Per \Cref{lem_noncontainmentsandcontainments}(2),
\[
(F^s:_T \nn^{s-i})\cap (I^i\cap \mm^s)R=\mm^{s-i}I^i.
\]
Therefore $x\notin  F^s:_T \nn^{s-i}$, namely $x\nn^{s-i}\not\subseteq F^s$. We note that
\[
x\nn^{s-i} \subseteq (I^i\cap \mm^s)\nn^{s-i} \subseteq U_s,
\]
thanks to \Cref{lem_decomposition_G}. Thus $U_s/F^s$ contains a non-zero homogeneous element of degree $d(x\nn^{s-i})=d+s-i=\reg W_s+s-i$. This implies
\[
\reg \frac{U_s}{F^s} \ge \reg W_s+ s-i 
\]
and concludes the proof.
\end{proof}
Now we are ready for the 
\begin{proof}[Proof of \Cref{thm_reg_ordinarypow}]
For the first assertion, if $s=1$, the equation $\reg F=\max\{2, \reg I, \reg J\}$ follows from \Cref{lem_decomposition_F}.

Now assume that $s\ge 2$. Note that the upper bound
\[
\reg F^s \le \max \limits_{i\in [1,s]} \left\{2s,\reg(\mm^{s-i}I^i)+s-i, \reg(\nn^{s-i}J^i)+s-i\right\}. 
\]
was proved in \Cref{lem_upperbound}. If $I=J=(0)$, then $F=\mm\nn$, and the desired conclusion $\reg F^s=2s$ is clearly true. So without loss of generality, we may assume that $I\neq (0)$.

Applying \Cref{lem_finitelength}(2), we get
\[
\reg F^s= \max_{i\in [1,s]} \left\{\reg(U_s/F^s)+1, 2s, \reg(I^i)+s-i, \reg(J^i)+s-i\right\}.
\]
Together with \Cref{lem_lowerbound_UsmodeFs}, which is applicable since $I\neq (0)$, we get the inequality below
\begin{align*}
\reg F^s &\ge \max_{i\in [1,s]} \left\{2s, \reg \frac{I^i}{\mm^{s-i}I^i}+s-i+1, \reg(I^i)+s-i, \reg(J^i)+s-i\right\}\\
         &= \max_{i\in [1,s]} \left\{2s, \reg(\mm^{s-i}I^i)+s-i, \reg(J^i)+s-i\right\}.
\end{align*}
The equality follows from \Cref{lem_regmM}. If $J=(0)$ then we get the desired lower bound for $\reg F^s$. If $J\neq (0)$, then arguing by symmetry, we also get the desired lower bound. Hence the first assertion holds true.

The second assertion follows since $\reg I^s\ge d(I^s)\ge 2s$ if $I\neq (0)$. The proof of the theorem is concluded. 
\end{proof}
\begin{rem}
\label{rem_noTor-vanishing}
The proof of \cite[Theorem 5.1]{NgV19b} is based crucially on the fact that if either $\chara \kk=0$ or $I$ is a monomial ideal, then for all $1\le t\le s$, the map $\mm^{s-t}I^t \to \mm^{s-t+1}I^{t-1}$ is \emph{Tor-vanishing}, namely the induced map on Tor against $\kk$ is zero. In \cite[Question 4.2]{NgV19b}, it was asked whether this is also true in positive characteristic. However, the answer is ``No!'', thanks to \cite[Example 3.9]{NgHo22}. In characteristic 2, the map $I^2\to I$ need not be Tor-vanishing, hence so neither is the map $I^2\to \mm I$ (in general, clearly if a map of graded $R$-modules $M\to P$ factors through a Tor-vanishing map $M\to N$, then it is Tor-vanishing itself).

The proof of \Cref{thm_reg_ordinarypow}, however, does not depend on Tor-vanishing arguments.
\end{rem}

As a corollary, we can now present the proof of the formulas for depth and regularity of symbolic powers of $F$ in the case $\min\{\depth(R/I),\depth(S/J)\}=0$.
\begin{proof}[Proof of \Cref{thm_depth_zerodepthcase}]
The equality $F^{(s)}=F^s$ for all $s\ge 1$ follows from part (2) of \Cref{lem_decomposition}. Since $R$ and $S$ have positive Krull dimensions, and  $\min\{\depth(R/I),$ $\depth(S/J)\}=0$, either $I$ or $J$ is non-zero. The remaining equalities follow from \Cref{prop_depthordinarypower_0} and \Cref{thm_reg_ordinarypow}.
\end{proof}
Thanks to \Cref{thm_reg_ordinarypow}, various results in \cite{NgV19b} become valid regardless of the characteristic of $\kk$. For example, we have the following improvement of \cite[Corollaries 5.2 and 5.6]{NgV19b}, with nearly identical proofs.
\begin{cor}
\label{cor_reg_equigen}
Keep using Notation \ref{notn_IandJ}. Assume further that each of $I$ and $J$ is non-zero and generated by forms of the same degree. Then for all $s\ge 1$, there is an equality
\[
\reg F^s=\max_{i\in [1,s]}\bigl\{\reg I^i+s-i, \reg J^i+s-i\bigr\}.
\]
\end{cor}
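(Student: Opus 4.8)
The plan is to deduce this directly from \Cref{thm_reg_ordinarypow} once the auxiliary terms appearing there are simplified. Since both $I$ and $J$ are non-zero, the second formula of \Cref{thm_reg_ordinarypow} applies and gives
\[
\reg F^s=\max_{i\in[1,s]}\bigl\{\reg(\mm^{s-i}I^i)+s-i,\ \reg(\nn^{s-i}J^i)+s-i\bigr\}
\]
for all $s\ge 1$, so it will be enough to show that, for every $s$,
\[
\max_{i\in[1,s]}\bigl\{\reg(\mm^{s-i}I^i)+s-i\bigr\}=\max_{i\in[1,s]}\bigl\{\reg I^i+s-i\bigr\},
\]
together with the symmetric statement for $J$.

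To do this I would first record that if $I$ is generated by forms of degree $d$, then $d\ge 2$ because $I\subseteq\mm^2$, and $I^i$ is a non-zero ideal of $R$ generated in the single degree $di$; in particular $d(I^i)=di$ and $\depth I^i\ge 1$, since a non-zero ideal of a positive-dimensional polynomial ring is torsion-free and hence admits a non-zerodivisor. Then \Cref{lem_regmM}(ii), applied to $M=I^i$ with exponent $s-i$, yields $\reg(\mm^{s-i}I^i)=\max\{\reg I^i,\,(s-i)+di\}$, and hence $\reg(\mm^{s-i}I^i)+s-i=\max\{\reg I^i+s-i,\,2(s-i)+di\}$.

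The remaining point is to check that the ``socle-type'' term $2(s-i)+di$ is always absorbed by the regularity terms. For $1\le i\le s$ one has $s-i\ge 0$ and $d\ge 2$, so $2(s-i)\le d(s-i)$, i.e. $2(s-i)+di\le ds$; and since $\reg I^s\ge d(I^s)=ds$ and $\reg I^s$ is itself one of the terms $\reg I^i+s-i$ (namely $i=s$), we get $2(s-i)+di\le\max_{j\in[1,s]}\{\reg I^j+s-j\}$ for every $i$. This establishes the displayed equality for $I$; running the identical argument with $(\nn,J)$ in place of $(\mm,I)$ gives it for $J$, and substituting both into the formula for $\reg F^s$ completes the proof.

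I expect the last comparison to be the crux of the matter: it is exactly where the hypothesis $I\subseteq\mm^2$ (equivalently $d\ge 2$) is used, and without that assumption the contribution $2(s-i)+d(I^i)$ could strictly exceed $\reg I^s$, so the clean formula would fail. The $s=1$ case is immediate and needs no equigeneration; one could either treat it separately or simply observe it is the $i=s$ instance of the general computation.
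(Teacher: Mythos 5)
Your proposal is correct, and it is essentially the argument the paper intends: the paper deduces \Cref{cor_reg_equigen} from \Cref{thm_reg_ordinarypow} and leaves the details to the reader, citing the ``nearly identical'' proof of \cite[Corollary 5.2]{NgV19b}, which is precisely this simplification of $\reg(\mm^{s-i}I^i)$ via the \c{S}ega formula in \Cref{lem_regmM}(ii) together with the observation that the term $2(s-i)+di$ is dominated by $\reg I^s\ge d(I^s)=ds$ because $d\ge 2$. Your handling of the equigeneration and depth hypotheses for $I^i$, and of the degenerate exponent $s-i=0$, is accurate.
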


\begin{cor}
\label{cor_asymptotic_regordpower}
Keep using \Cref{notn_IandJ}. Assume further that both $I$ and $J$ are non-zero ideals satisfying one of the following conditions: 
\begin{enumerate}[\quad \rm (i)]
\item All the minimal homogeneous generators have degree $2$;
\item All the minimal homogeneous generators have degree at least $3$;
\item The subideal generated by elements of degree 2 is integrally closed, e.g. $I$ and $J$ are squarefree monomial ideals.
\end{enumerate}
Then for all $s\gg 0$, there is an equality $\reg F^s=\max\{\reg I^s,\reg J^s\}$.
\end{cor}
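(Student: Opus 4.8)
The plan is to feed the formula of \Cref{thm_reg_ordinarypow} into the asymptotic regime and show that, once $s\gg 0$, the maximum computing $\reg F^s$ is attained at the extreme index $i=s$. Since $I$ and $J$ are both non-zero, \Cref{thm_reg_ordinarypow} gives
\[
\reg F^s = \max_{i\in[1,s]}\bigl\{\reg(\mm^{s-i}I^i)+s-i,\ \reg(\nn^{s-i}J^i)+s-i\bigr\},
\]
and the two terms with $i=s$ are precisely $\reg I^s$ and $\reg J^s$, so $\reg F^s\ge\max\{\reg I^s,\reg J^s\}$ for every $s\ge 1$. Hence everything reduces to showing that, for $s$ large, each intermediate term $\reg(\mm^{s-i}I^i)+s-i$ with $1\le i\le s-1$ is $\le\max\{\reg I^s,\reg J^s\}$, and symmetrically with the roles of $I$ and $J$ exchanged.

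For the intermediate terms I would apply \Cref{lem_regmM}(i), giving $\reg(\mm^{s-i}I^i)=\max\{\reg I^i,\ \reg(I^i/\mm^{s-i}I^i)+1\}$, together with the elementary remark that $I^i/\mm^{s-i}I^i$ is a quotient of $\bigoplus_\delta (R/\mm^{s-i})(-\delta)$, the sum over the minimal generating degrees $\delta\le d(I^i)$ of $I^i$, so that $\reg(I^i/\mm^{s-i}I^i)\le d(I^i)+(s-i)-1$. This produces the bound
\[
\reg(\mm^{s-i}I^i)+s-i\ \le\ \max\bigl\{\reg I^i+s-i,\ d(I^i)+2(s-i)\bigr\}.
\]
I then invoke the classical fact (recalled e.g.\ in \cite{NgV19b}) that $i\mapsto\reg I^i$ and $i\mapsto d(I^i)$ are eventually linear, say $\reg I^i=\rho_I i+b_I$ and $d(I^i)=\delta_I i+c_I$ for $i\gg 0$, with $2\le d_{\min}(I)\le\delta_I\le\rho_I\le d(I)$ (the lower bound $2$ because $I\subseteq\mm^2$). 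Since $\rho_I\ge 2>1$, the map $i\mapsto\reg I^i+s-i$ is nondecreasing on the eventually-linear range and the finitely many remaining values are swamped by $\reg I^s$ once $s\gg 0$, so $\max_{i\in[1,s]}(\reg I^i+s-i)=\reg I^s$; this takes care of the first term of the bound, for both $I$ and $J$.

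It remains to control $d(I^i)+2(s-i)$, and this is where the three hypotheses are used and where I expect the real difficulty. In case (i) every $I^i$ is generated in the single degree $2i$, so \Cref{lem_regmM}(ii) sharpens the estimate to $\reg(\mm^{s-i}I^i)=\max\{\reg I^i,\ 2i+(s-i)\}$; thus the offending term becomes simply $2s$, which is $\le d(I^s)\le\reg I^s$ for all $s\ge 1$, and the previous paragraph already finishes the case (an equigenerated $I$ within case (ii) is handled the same way, or via \Cref{cor_reg_equigen}). In case (ii) the bound $d_{\min}(I)\ge 3$ forces the asymptotic slope $\delta_I=\lim_i d(I^i)/i$ to be $\ge 3$, so on the eventually-linear range $d(I^i)+2(s-i)=2s+c_I+i(\delta_I-2)$ is increasing in $i$ and for $i\le s-1$ stays $\le\delta_I s+c_I-1=d(I^s)-1\le\reg I^s$, while the finitely many small-$i$ values are dominated by $\reg I^s\ge d(I^s)=\delta_I s+c_I$ for $s\gg 0$. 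In case (iii) I would argue as in the corresponding analysis of \cite{NgV19b}, using the integral closedness of the degree-two part of $I$ to keep $d(I^i)$ — and the top socle degree of $I^i/\mm^{s-i}I^i$ — from exceeding $\reg I^i$ by more than a bounded quantity, which is again enough. The heart of the matter, and the step I expect to be the most delicate, is precisely this comparison of the asymptotic generating-degree slope $\delta_I$ with the asymptotic regularity slope $\rho_I$: the three listed hypotheses are exactly sufficient conditions guaranteeing that $d(I^i)+2(s-i)-\max\{\reg I^s,\reg J^s\}\le 0$ for all $1\le i\le s-1$ once $s\gg 0$, so that no intermediate term can overtake $\max\{\reg I^s,\reg J^s\}$.
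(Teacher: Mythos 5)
Your overall strategy---feed \Cref{thm_reg_ordinarypow} into the asymptotic regime, read off the lower bound $\reg F^s\ge\max\{\reg I^s,\reg J^s\}$ from the $i=s$ terms, and kill the intermediate terms via $\reg(\mm^{s-i}I^i)\le\max\{\reg I^i,\ d(I^i)+s-i\}$ together with the eventual linearity of $i\mapsto\reg I^i$---is exactly the intended one (the paper gives no proof beyond pointing to \cite[Corollaries 5.2 and 5.6]{NgV19b}), and your treatments of cases (i) and (ii) are complete and correct. The gap is in case (iii). The justification you offer there, namely keeping $d(I^i)$ ``from exceeding $\reg I^i$ by more than a bounded quantity,'' is vacuous: $d(M)\le\reg M$ holds for every finitely generated graded module, and this is not what the integral closedness hypothesis buys. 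Your own bookkeeping shows where the danger sits. Writing $\reg I^s=\rho_I s+b_I$ for $s\gg0$, the terms $d(I^i)+2(s-i)$ with $i$ below the stability threshold equal $2s+(d(I^i)-2i)$, and when $\rho_I=2$ they are dominated by $\reg I^s=2s+b_I$ only if $d(I^i)-2i\le b_I$ for those finitely many $i$---a bound that genuinely fails for some ideals $I\subseteq\mm^2$ (this is precisely the phenomenon behind \cite[Remark 5.7]{NgV19b}, and the reason the corollary carries hypotheses at all).

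The missing idea is Kodiyalam's description of the slope: $\rho_I$ equals the least $d$ such that the subideal $I_{\langle\le d\rangle}$ generated by the elements of $I$ of degree at most $d$ is a reduction of $I$. Under hypothesis (iii) one then argues by dichotomy. Either $\rho_I\ge 3$, in which case $\reg I^s-2s\to\infty$ and your case (ii) computation goes through verbatim, using only $d(I^i)\le\reg I^i$ on the stable range and crude constants on the finitely many remaining indices. Or $\rho_I=2$, so that $I_{\langle 2\rangle}$ is a reduction of $I$; then $I\subseteq\overline{I}=\overline{I_{\langle 2\rangle}}=I_{\langle 2\rangle}\subseteq I$ by the integral closedness hypothesis, hence $I$ is generated in degree $2$ and case (i) applies. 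Without this dichotomy your argument for (iii) does not close.
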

The details are left to the interested reader.


\section{Remarks and questions}
\label{sect_remarks}
While we focus on symbolic powers defined using associated primes, our method can be adapted to the symbolic powers defined using minimal primes
\[
\up{m}I^{(s)}=\bigcap\limits_{P \in \Min_R(I)}(I^sR_P\cap R).
\]
Denote $I^{\um}=\up{m}I^{(1)}$ the \emph{unmixed part} of the ideal $I$. Thus $I=I^{\um}$ if and only if $I$ is unmixed. Modifying \Cref{lem_decomposition} and \Cref{lem_decomposition_as_sum}, we have
\begin{lem} 
\label{lem_decomposition_minsymbpow}
Employ \Cref{notn_IandJ}. Assume that $\min\left\{\dim(R/I), \dim(S/J)\right\}\ge 1$, i.e. $I$ is not $\mm$-primary and $J$ is not $\nn$-primary. Then there are equalities
\[
\up{m}F^{(1)}=(\up{m}I^{(1)}+\nn)\cap (\up{m}J^{(1)}+\mm)=\up{m}I^{(1)}+\up{m}J^{(1)}+\mm\nn,
\]
and for every integer $s\ge 1$, we have equalities
$$
\up{m}F^{(s)} = \, \up{m}(I+\nn)^{(s)} \cap \, \up{m}(J+\mm)^{(s)}=(I^{\um}+\nn)^{(s)}\cap (J^{\um}+\mm)^{(s)}.
$$
\end{lem}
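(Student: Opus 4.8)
The plan is to reduce everything to the associated-primes case already settled in \Cref{lem_decomposition}(1), by passing to the unmixed parts $I^{\um}$ and $J^{\um}$ and invoking the Background identity $\up{m}K^{(s)}=(K^{\um})^{(s)}$, which holds for every proper homogeneous ideal $K$.

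The one structural input I would establish first is the equality $(I+\nn)^{\um}=I^{\um}+\nn$ (and, symmetrically, $(J+\mm)^{\um}=J^{\um}+\mm$). This follows because the quotient map $\phi\colon T\to T/\nn\cong R$ induces an inclusion-preserving bijection between primes of $T$ containing $\nn$ and primes of $R$, carries preimages of primary ideals to primary ideals, and commutes with forming intersections: applying $\phi^{-1}$ to an irredundant primary decomposition $I=Q_1\cap\cdots\cap Q_d$ with the $Q_i$ homogeneous produces the primary decomposition $I+\nn=(Q_1+\nn)\cap\cdots\cap(Q_d+\nn)$ of $T$, in which $Q_i+\nn$ corresponds to a minimal prime of $I+\nn$ exactly when $Q_i$ corresponds to a minimal prime of $I$; intersecting the minimal components and applying $\phi^{-1}$ once more gives $(I+\nn)^{\um}=\phi^{-1}(I^{\um})=I^{\um}+\nn$. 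Combined with the Background identity $\up{m}K^{(s)}=(K^{\um})^{(s)}$ applied to $K=I+\nn$ and $K=J+\mm$, this yields $\up{m}(I+\nn)^{(s)}=((I+\nn)^{\um})^{(s)}=(I^{\um}+\nn)^{(s)}$ and, symmetrically, $\up{m}(J+\mm)^{(s)}=(J^{\um}+\mm)^{(s)}$ for every $s\ge1$, which already accounts for the second equality in the last displayed formula of the lemma.

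It then remains to prove $\up{m}F^{(s)}=\up{m}(I+\nn)^{(s)}\cap\up{m}(J+\mm)^{(s)}$, and I would obtain this by transcribing the localization argument in the proof of \Cref{lem_decomposition}(1) with $\Ass$ replaced by $\Min$ throughout. By \Cref{lem_AssMin_F}(ii) --- applicable since $\dim(R/I),\dim(S/J)\ge1$ --- we have $\Min_T F=\{\pp_1+\nn:\pp_1\in\Min_R(I)\}\cup\{\pp_2+\mm:\pp_2\in\Min_S(J)\}$. Because $\dim(R/I)\ge1$, the ideal $\mm$ is not a minimal prime of $I$, so every $\pp_1\in\Min_R(I)$ is distinct from $\mm$; hence $\mm\not\subseteq\pp_1+\nn$ and $F^sT_{\pp_1+\nn}=(I+\nn)^sT_{\pp_1+\nn}$. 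Together with $\Min_T(I+\nn)=\{\pp_1+\nn:\pp_1\in\Min_R(I)\}$ (from \Cref{lem_Ass_tensor}(ii)) this gives $\bigcap_{\pp_1\in\Min_R(I)}(F^sT_{\pp_1+\nn}\cap T)=\up{m}(I+\nn)^{(s)}$, and symmetrically $\bigcap_{\pp_2\in\Min_S(J)}(F^sT_{\pp_2+\mm}\cap T)=\up{m}(J+\mm)^{(s)}$; intersecting the two yields the desired equality. For $s=1$ this specializes to $\up{m}F^{(1)}=(I^{\um}+\nn)\cap(J^{\um}+\mm)$, and since $I^{\um}\subseteq R$ and $J^{\um}\subseteq S$ are proper homogeneous ideals, a direct bigraded computation (the one underlying \Cref{lem_decomposition_F}(i), which uses only $(I^{\um})_0=(J^{\um})_0=0$, not the containments $I^{\um}\subseteq\mm^2$, $J^{\um}\subseteq\nn^2$) gives $(I^{\um}+\nn)\cap(J^{\um}+\mm)=I^{\um}+J^{\um}+\mm\nn$.

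The only place I expect to need genuine care is the identity $(I+\nn)^{\um}=I^{\um}+\nn$: one must verify that an irredundant primary decomposition of $I$ really pulls back along $\phi$ to one of $I+\nn$, that primality and minimality are preserved under $\pp_1\mapsto\pp_1+\nn$, and keep track of possible redundancy. Everything else is a routine adaptation of the arguments of \Cref{lem_decomposition}(1) and \Cref{lem_decomposition_F}(i), with $(I,J)$ replaced by $(I^{\um},J^{\um})$ where appropriate and $\Ass$ replaced by $\Min$; note in particular that $I^{\um}$ and $J^{\um}$ need not lie in $\mm^2$ and $\nn^2$, so one should only use the parts of those proofs that rely on properness and on the positivity of $\dim(R/I)$ and $\dim(S/J)$ (equivalently, of $\depth(R/I^{\um})$ and $\depth(S/J^{\um})$).
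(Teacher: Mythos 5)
Your proof is correct and follows exactly the route the paper intends: the paper states this lemma as a modification of \Cref{lem_decomposition}(1) and leaves the details to the reader, and your argument is precisely that modification (replace $\Ass$ by $\Min$ in the localization argument, using \Cref{lem_AssMin_F}(ii) and the fact that $\dim(R/I)\ge 1$ forces $\mm\notin\Min_R(I)$), together with the needed identities $(I+\nn)^{\um}=I^{\um}+\nn$ and $\up{m}K^{(s)}=(K^{\um})^{(s)}$ from the Background section. The points you flag as needing care (pullback of primary decompositions along $T\to T/\nn\cong R$, and that the $s=1$ intersection formula only needs properness of $I^{\um},J^{\um}$ rather than containment in $\mm^2,\nn^2$) are handled correctly.
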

\begin{lem}
\label{lem_decomposition_as_sum_minsymbpow}
Employ \Cref{notn_IandJ}. Assume that $\min\left\{\dim(R/I),\dim(S/J)\right \}\ge 1$. For all $s\ge 1$, there is an equality
\[
\up{m}F^{(s)}= \sum_{i=0}^{s}\sum_{t=0}^{s}(\up{m}I^{(i)}\cap \mm^{s-t})(\up{m}J^{(t)}\cap \nn^{s-i}).
\]
\end{lem}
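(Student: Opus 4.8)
The plan is to mirror, line for line, the proof of \Cref{lem_decomposition_as_sum}, substituting each ingredient by its ``minimal'' counterpart. First I would invoke \Cref{lem_decomposition_minsymbpow}, whose hypothesis $\min\{\dim(R/I),\dim(S/J)\}\ge 1$ is precisely the one assumed here, to reduce to an intersection:
\[
\up{m}F^{(s)} = (I^{\um}+\nn)^{(s)}\cap (J^{\um}+\mm)^{(s)}.
\]
Thus it suffices to expand each factor on the right as a double sum of products and then intersect using \Cref{lem_intersect_formula}.

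For the expansion, I would exploit that $I^{\um}$ is, by construction, an unmixed ideal, so that $\Ass(R/I^{\um})=\Min(R/I^{\um})=\Min(R/I)$; consequently, by the reduction recalled in the Background section (namely $\up{m}I^{(s)}=L^{(s)}$ for $L=\up{m}I^{(1)}=I^{\um}$), the associated-primes symbolic powers of $I^{\um}$ agree with the minimal-primes symbolic powers of $I$, i.e.\ $(I^{\um})^{(i)}=\up{m}I^{(i)}$ for every $i\ge 0$. Assuming $I\neq (0)$, the ideals $I^{\um}\subseteq R$ and $\nn\subseteq S$ are nonzero and proper, so \Cref{lem_binomialformula}, together with the coincidence of symbolic and ordinary powers of the prime ideal $\nn$, yields
\[
(I^{\um}+\nn)^{(s)}=\sum_{i=0}^s (I^{\um})^{(i)}\,\nn^{s-i}=\sum_{i=0}^s \up{m}I^{(i)}\,\nn^{s-i},
\]
and symmetrically $(J^{\um}+\mm)^{(s)}=\sum_{t=0}^s \up{m}J^{(t)}\,\mm^{s-t}$. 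The degenerate case $I=(0)$ is immediate: then $I^{\um}=(0)$, and the right-hand side above collapses to $\up{m}I^{(0)}\nn^{s}=\nn^{s}=\nn^{(s)}=(I^{\um}+\nn)^{(s)}$, so the identity persists; likewise if $J=(0)$.

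Finally, noting that $\up{m}I^{(\bullet)}$ and $\up{m}J^{(\bullet)}$ are descending chains of homogeneous ideals of $R$ and $S$, hence filtrations of homogeneous ideals, I would apply \Cref{lem_intersect_formula} with $K_\bullet=\up{m}I^{(\bullet)}$ and $L_\bullet=\up{m}J^{(\bullet)}$ to obtain
\[
\up{m}F^{(s)}=\Bigl(\sum_{i=0}^s \up{m}I^{(i)}\nn^{s-i}\Bigr)\bigcap\Bigl(\sum_{t=0}^s \up{m}J^{(t)}\mm^{s-t}\Bigr)=\sum_{i=0}^{s}\sum_{t=0}^{s}(\up{m}I^{(i)}\cap \mm^{s-t})(\up{m}J^{(t)}\cap \nn^{s-i}),
\]
which is the claim. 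I do not expect a genuine obstacle here: the only non-formal step is the conversion of minimal-primes symbolic powers into associated-primes symbolic powers of unmixed parts together with the ensuing binomial-type expansion of the symbolic power of the sum $I^{\um}+\nn$, and this is already encapsulated in \Cref{lem_decomposition_minsymbpow} and the standard reduction recalled in the Background; everything else repeats the proof of \Cref{lem_decomposition_as_sum} verbatim.
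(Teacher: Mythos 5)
Your proof is correct and is exactly the modification the paper has in mind (the paper leaves the details to the reader, indicating only that one should adapt \Cref{lem_decomposition} and \Cref{lem_decomposition_as_sum}): reduce to the intersection $(I^{\um}+\nn)^{(s)}\cap(J^{\um}+\mm)^{(s)}$ via \Cref{lem_decomposition_minsymbpow}, identify $(I^{\um})^{(i)}$ with $\up{m}I^{(i)}$ using the unmixedness of $I^{\um}$, expand by \Cref{lem_binomialformula}, and apply \Cref{lem_intersect_formula} to the filtrations $\up{m}I^{(\bullet)}$ and $\up{m}J^{(\bullet)}$. Your handling of the degenerate case $I=(0)$ (where \Cref{lem_binomialformula} does not directly apply) is a welcome extra precision.
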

The suitable modification of \Cref{thm_depth_reg} for ``minimal'' symbolic powers is
\begin{thm} 
\label{thm_depth_reg_minsymbpow}
Employ \Cref{notn_IandJ}. Assume that $\min\left\{\dim(R/I), \dim(S/J)\right\}\ge 1$. Then for every integer $s\geq 1$, there are equalities
\begin{itemize}
\item[(i)] ${\rm depth}(\up{m}F^{(s)})=2$, and
\item[(ii)] ${\rm reg}(\up{m}F^{(s)})=\max\limits_{i\in [1,s]}\left\{2s,\reg(\up{m}I^{(i)})+s-i, {\rm reg}(\up{m}J^{(i)})+s-i \right\}$.
\end{itemize}
\end{thm}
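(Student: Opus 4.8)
The plan is to transcribe the proof of \Cref{thm_depth_reg} essentially verbatim, after the substitution of the unmixed parts $I^{\um}=\up{m}I^{(1)}$ and $J^{\um}=\up{m}J^{(1)}$ for $I$ and $J$, the replacement of associated symbolic powers by minimal ones, and the replacement of the depth hypotheses by the corresponding dimension hypotheses, with Lemmas \ref{lem_decomposition_minsymbpow}, \ref{lem_decomposition_as_sum_minsymbpow} and \ref{lem_mingen_deg2s} playing the roles previously played by Lemmas \ref{lem_decomposition}, \ref{lem_decomposition_as_sum} and \ref{lem_mingen_deg2s}. The translation rests on three facts about the unmixed part: $I^{\um}$ is unmixed with $\Ass_R(I^{\um})=\Min_R(I^{\um})=\Min_R(I)$; one has $\up{m}I^{(i)}=(I^{\um})^{(i)}$ for every $i\ge1$; and consequently the hypothesis $\dim(R/I)\ge1$ is \emph{exactly} the statement $\mm\notin\Ass_R(I^{\um})$, i.e.\ $\depth(R/I^{\um})\ge1$ (and likewise for $J$). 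Throughout I write $I'=I^{\um}+\nn$, $J'=J^{\um}+\mm$, and $\pp=\mm T+\nn T$.

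For (i): by \Cref{lem_decomposition_minsymbpow} one has $\up{m}F^{(s)}=I'^{(s)}\cap J'^{(s)}$, hence the short exact sequence
\[
0\to T/\up{m}F^{(s)}\to T/I'^{(s)}\oplus T/J'^{(s)}\to T/\bigl(I'^{(s)}+J'^{(s)}\bigr)\to0 .
\]
First I would note, via \Cref{lem_binomialformula}, that $I'^{(s)}\supseteq\nn^s$ and $J'^{(s)}\supseteq\mm^s$, so the rightmost term is Artinian, of depth $0$. Next, feeding the proper homogeneous ideal $I^{\um}$ of $R$ (which has positive depth) into \Cref{lem_depthregIplusn_symbolic}(i) and \Cref{lem_positivedepth_symbpow}(ii), and using $\up{m}I^{(i)}=(I^{\um})^{(i)}$, I get $\depth(T/I'^{(s)})=\min_{i\in[1,s]}\depth\bigl(R/\up{m}I^{(i)}\bigr)\ge1$, and symmetrically $\depth(T/J'^{(s)})\ge1$. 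The long exact cohomology sequence of the displayed sequence then forces $H^0_\pp(T/\up{m}F^{(s)})=0$ and $H^1_\pp(T/\up{m}F^{(s)})\ne0$, so $\depth(T/\up{m}F^{(s)})=1$; since $\dim T=\dim R+\dim S\ge2$, the sequence $0\to\up{m}F^{(s)}\to T\to T/\up{m}F^{(s)}\to0$ then yields $\depth(\up{m}F^{(s)})=2$ (in the boundary case $\dim T=2$ one necessarily has $I=J=0$, whence $\up{m}F^{(s)}=\mm^s\nn^s\cong T$ is free of depth $2$).

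For (ii): after observing that when $I=J=0$ one has $\up{m}F^{(s)}=\mm^s\nn^s$ of regularity $2s$ by \Cref{lem_tensor}, I may assume $I\ne0$. Exactly as in the proof of \Cref{thm_depth_reg}(ii), the Artinian ring $T/(I'^{(s)}+J'^{(s)})$ is killed in all degrees $\ge2s-1$ by $\mm^s+\nn^s$, so it has regularity $\le2s-2$. Applying \Cref{lem_mingen_deg2s} to the filtrations $K_\bullet=\up{m}I^{(\bullet)}$, $L_\bullet=\up{m}J^{(\bullet)}$ (with $K_0=R$, $L_0=S$, so that conditions (1), (2) of that lemma become $\mm\subseteq K_0$, $\nn\subseteq L_0$ and $\dim(R/I^{\um})=\dim(R/I)>0$, $\dim(S/J^{\um})>0$) — and noting that the ideal $W_s$ of that lemma equals $I'^{(s)}\cap J'^{(s)}=\up{m}F^{(s)}$ by \Cref{lem_binomialformula} and \Cref{lem_decomposition_minsymbpow} — shows that $\up{m}F^{(s)}$ has a minimal generator in degree $2s$; hence $\reg(T/\up{m}F^{(s)})\ge2s-1>\reg\bigl(T/(I'^{(s)}+J'^{(s)})\bigr)$. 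The displayed short exact sequence then gives $\reg(T/\up{m}F^{(s)})=\max\bigl\{2s-1,\reg(T/I'^{(s)}),\reg(T/J'^{(s)})\bigr\}$, and \Cref{lem_depthregIplusn_symbolic}(ii), via $\up{m}I^{(i)}=(I^{\um})^{(i)}$, identifies the last two terms as $\max_i\{\reg(R/\up{m}I^{(i)})+s-i\}$ and $\max_i\{\reg(S/\up{m}J^{(i)})+s-i\}$. Adding $1$ throughout — using $\reg(\up{m}F^{(s)})=\reg(T/\up{m}F^{(s)})+1$, using $\reg(\up{m}I^{(i)})=\reg(R/\up{m}I^{(i)})+1$ on the nonzero terms, and absorbing into the $2s$ term the contributions (which are $\le s<2s$) of any zero factor — yields the asserted formula for $\reg(\up{m}F^{(s)})$.

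The one genuine point of departure from the proof of \Cref{thm_depth_reg}, and the only step requiring real care, is the implication ``$\dim(R/I)\ge1\Rightarrow\depth(R/\up{m}I^{(i)})\ge1$'': this is precisely where the dimension hypothesis (rather than a depth hypothesis) is used, and it hinges on the unmixedness of $I^{\um}$ together with $\up{m}I^{(i)}=(I^{\um})^{(i)}$ and \Cref{lem_positivedepth_symbpow}(ii). Everything else is a faithful rerun of the earlier argument, supplemented only by the by-hand treatment of the finitely many degenerate configurations ($I$ or $J$ zero; $\dim T=2$).
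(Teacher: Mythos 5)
Your proof is correct and follows exactly the route the paper intends: the paper leaves the proof of \Cref{thm_depth_reg_minsymbpow} to the reader as the ``suitable modification'' of \Cref{thm_depth_reg}, and your argument is precisely that adaptation, with the one genuinely new ingredient --- the translation $\dim(R/I)\ge 1 \Leftrightarrow \mm\notin\Min_R(I)=\Ass_R(I^{\um}) \Leftrightarrow \depth(R/I^{\um})\ge 1$, combined with $\up{m}I^{(i)}=(I^{\um})^{(i)}$ --- correctly identified and justified. The bookkeeping at the edges (passing from $\depth(T/\up{m}F^{(s)})=1$ to $\depth(\up{m}F^{(s)})=2$, and the degenerate cases $I=0$ or $J=0$) is also handled correctly.
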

We leave the details of the proofs to the interested reader.

The following question came up in the course of proving Corollary \ref{cor_reg_Fs_unmixedcase}. We do not have an answer to it yet.
\begin{quest}
Let $I\subseteq \mm^2$ be an unmixed homogeneous ideal in a polynomial ring $R$. Is it true that for all $s\ge 1$, the inequality
$\reg I^{(s)}\ge 2s$ holds?
\end{quest}

We may ask whether the complicated formula for regularity in \Cref{thm_depth_reg}
$$
\reg(F^{(s)})=\max\limits_{i\in [1,s]} \left\{2s,\reg(I^{(i)})+s-i, \reg(J^{(i)})+s-i\right \}
$$
can be simplified to 
$$
\reg F^{(s)}=\max\{\reg I^{(s)}, \reg J^{(s)}\}
$$ 
for all $s\ge 1$, at least when $I\subseteq \mm^2$ and $J\subseteq \nn^2$. Unfortunately, this is not true even if both $I$ and $J$ are primary binomial ideals.
\begin{ex}
Let $R=\kk[a,b,c,d], S=\kk[y,z]$, 
\begin{align*}
I &= (a^5,a^4b,ab^4,b^5, a^2b^3c^7-a^3b^2d^7,a^3b^3),J= (y^2).
\end{align*}
Hence $T=\kk[a,b,c,d,y,z]$, $F=I+J+(a,b,c,d)(y,z)$. We can check that $I$ and $J$ are primary ideals, $\sqrt{I}=(a,b), \sqrt{J}=(y)$, 
\begin{align*}
\depth(R/I)&=2, \depth(S/J)=1,\\
\reg I &= 12, \reg I^{(2)}=10, \reg J=2, \reg J^{(2)}=4,\\
\reg F &= 12, \reg F^{(2)}=13.
\end{align*}
Hence $\reg F^{(2)}=13 > \max\{\reg I^{(2)},\reg J^{(2)}\}=\max\{10,4\} =10.$ 
\end{ex}
We also have a similar example where each of $I$ and $J$ is a monomial ideal generated in a single degree.
\begin{ex}
Let $R=\kk[a,b,c,d,e,f], S=\kk[y,z]$, 
\begin{align*}
I &= (a^4,a^3b,ab^3,b^4)(c,d,e)^7+a^2b^2(c^7,d^7,e^7),J= (y^2).
\end{align*}
Hence $T=\kk[a,b,c,d,e,f,y,z]$, $F=I+J+(a,b,c,d,e,f)(y,z)$. We can check that $I$ and $J$ are equigenerated monomial ideals, 
\begin{align*}
\depth(R/I)&=1, \depth(S/J)=1,\\
\reg I &= 23, \reg I^{(2)}=22, \reg J=2, \reg J^{(2)}=4,\\
\reg F &= 23, \reg F^{(2)}=24.
\end{align*}
That $\reg I^{(2)}=22$ can be seen using $\Ass_R(I)=\{(a,b), (c,d,e), (a,b,c,d,e)\}$, $I^2=(a,b)^8(c,d,e)^{14}$, and thus $I^{(2)}=I^2$ thanks to \cite[Lemma 2.2]{HJKN}. Hence $\reg F^{(2)}=24 > \max\{\reg I^{(2)},\reg J^{(2)}\}=\max\{22,4\} =22.$ 
\end{ex}
In view of \Cref{cor_asymptotic_regordpower}, we may ask
\begin{quest}
\label{quest_reg_larsymbpow}
Let $I\subseteq \mm^2$ and $J\subseteq \nn^2$ be homogeneous ideals of $R$ and $S$, respectively. Assume furthermore that both $I$ and $J$ are unmixed. Is it true that for all $s\gg 0$, the equality
\[
\reg F^{(s)}=\max\{\reg I^{(s)}, \reg J^{(s)}\}
\]
holds?
\end{quest}
Note that from \cite[Remark 5.7]{NgV19b}, it may happens for mixed monomial ideals $I$ and $J$ that $\reg F^s > \max\{\reg I^s, \reg J^s\}$ for all $s\ge 3$. This is the reason why we require that $I$ and $J$ are unmixed in the last question. Nevertheless, we do not know of any counterexample to this question even among mixed ideals.
\begin{rem}
\Cref{quest_reg_larsymbpow} would have a positive answer if we can show that for an unmixed homogeneous ideal $I\subseteq \mm^2$, it holds that
\[
\reg I^{(s)}=\max\limits_{i\in [1,s]}\{\reg I^{(i)}+s-i\}
\]
for all $s\gg 0$. We do not whether the last statement is always true, even if $I$ is an unmixed monomial ideal.
\end{rem}
There are exact formulas for the depth and regularity of ordinary and symbolic powers of $I+J$ in \cite{HNTT, HJKN, NgV19a}. These results, however, depend on Tor-vanishing results, and require that either $\chara \kk=0$, or $I$ and $J$ are both monomial ideals. In view of the main results of this paper, it would be interesting to see whether we can prove such formulas in a characteristic-independent way.


\section*{Acknowledgments}
The first and the second named authors (HVD and HDN) were supported by NAFOSTED under the grant number 101.04-2023.30. The third named author is supported by the FAPA grant from the Universidad de los Andes. Part of this work was done when the third named author visited Vietnam Academy of Science and Technology. He gratefully thanks their hospitality.




\begin{thebibliography}{10}
\bibitem{AAF12}
H. Ananthranayan, L.L. Avramov and W. Frank Moore,
\emph{Connected sums of Gorenstein local rings}.
 J. Reine Angew. Math. {\bf 667} (2012), 149--176. 
 
\bibitem{BHJT21}
S. Bisui, H.T. H\`a, A.V. Jayanthan, and A.C. Thomas,
\emph{Resurgence numbers of fiber products of projective schemes}.
 Collect. Math. {\bf 72} (2021), no. 3, 605--614. 

\bibitem{BH98} 
W. Bruns and J. Herzog, 
{\it Cohen--Macaulay Rings. Rev. ed.}, 
Cambridge Studies in Advanced Mathematics, {\bf 39}, Cambridge University Press, 1998.

\bibitem{CH+}
G. Caviglia, H.T. H\`a, J. Herzog, M. Kummini, N. Terai, and N.V. Trung,
\emph{Depth and regularity modulo a principal ideal}, 
J. Algebraic Combin. {\bf 49} (2019), no. 1, 1--20. 

\bibitem{CR10}
A. Conca and T. R\"omer,
\emph{Generic initial ideals and fibre products}.
Osaka J. Math. {\bf 47} (2010), 17--32.

\bibitem{DDSG+18}
H. Dao, A. De Stefani, E. Grifo, C. Huneke and L. N\'u\~{n}ez-Betancourt, 
\emph{Symbolic powers of ideals}. In: \emph{Singularities and Foliations. Geometry, Topology and Applications},
Springer Proceedings in Mathematics and Statistics, Volume {\bf 222}, pp. 387–432 (Springer, Cham, 2018).


\bibitem{DM21}
H. Dao and J. Monta\~{n}o,
\emph{Symbolic analytic spread: Upper bounds and applications}.
J. Inst. Math. Jussieu {\bf 20} (2021), no. 6, 1969--1981.

\bibitem{DK75}
A. Dress and H. Kr\"amer, 
\emph{Bettireihen von Faserprodukten lokaler Ringe}.
Math. Ann. {\bf 215} (1975), 79--82.

\bibitem{DHNT}
L.X. Dung, T.T. Hien, H.D. Nguyen, and T.N. Trung,
\emph{Regularity and Koszul property of symbolic powers of monomial ideals}.
 Math. Z. {\bf 298} (2021), no. 3-4, 1487--1522. 
 
\bibitem{Eis95}
D. Eisenbud, 
\emph{Commutative Algebra With a View Toward Algebraic Geometry}. 
Graduate Texts in Mathematics, vol. {\bf 150}. Springer, New York (1995).

 
\bibitem{FJP23}
T.H. Freitas and V.H. Jorge P\'erez,
\emph{Lower bounds for Betti numbers over fiber product rings}. 
Communications in Algebra, {\bf 51} (2023), no. 12, 5263--5276. 
 
\bibitem{G21}
H. Geller, 
\emph{Minimal free resolutions of fiber products}. 
Proc. Amer. Math. Soc. {\bf 150} (2022), no. 10, 4159--4172.

\bibitem{HJKN}
H.T. H\`a, A.V. Jayanthan, A. Kumar, and H.D. Nguyen,
\emph{ Binomial expansion for saturated and symbolic powers of sums of ideals}.
J. Algebra {\bf 620} (2023), 690--710.

\bibitem{HNTT} 
H.T. H\`a, H.D. Nguyen, N.V. Trung, and T.N. Trung, 
\emph{Symbolic powers of sums of ideals}, 
Math. Z. {\bf 282} (2016), 819--838.

\bibitem{HTT16}
H.T. H\`a, N.V. Trung, and T.N. Trung, 
\emph{Depth and regularity of powers of sums of ideals}, 
Math. Z. {\bf 294} (2020), no. 3-4, 1499--1520.

\bibitem{HKTT} 
L.T. Hoa, K. Kimura, N. Terai, and T.N. Trung, 
\emph{Stability of depths of symbolic powers of Stanley-Reisner ideal}, 
J. Algebra {\bf 473} (2017), 307--323.

\bibitem{JK20}
A.V. Jayanthan and R. Kumar,
\emph{Regularity of symbolic powers of edge ideals}.
J. Pure Appl. Algebra {\bf 224} (2020), no. 7, 106306.

\bibitem{KKS}
A. Kumar, R. Kumar, and R. Sarkar,
\emph{Certain algebraic invariants of edge ideals of join of graphs}.
 J. Algebra Appl. {\bf 20} (2021), no. 6, Paper No. 2150099, 12 pp. 
 
\bibitem{Les81}
J. Lescot,
\emph{La s\'erie de Bass d'un produit fibr\'e d'anneaux locaux.}
C. R. Acad. Sci. Paris Sér. I Math. {\bf 293} (1981), no. 12, 569--571.



\bibitem{MNPTV21}
N.C. Minh, L.D. Nam, T.D. Phong, P.T. Thuy, and T. Vu,
\emph{Comparison between regularity of small symbolic powers and ordinary powers of an edge ideal}.
Journal of Combinatorial Theory, Series A, {\bf 190} (2022), 105621.

\bibitem{MT19}
N.C. Minh and T.N. Trung,
\emph{Regularity of symbolic powers and arboricity of matroids}. 
Forum Mathematicum, vol. 31, no. 2 (2019), 465--477.

\bibitem{MTV24}
N.C. Minh, T.N. Trung, and T. Vu,
\emph{Stable value of depth of symbolic powers of edge ideals of graphs}.
Pacific J. Math. {\bf 329}, no. 1 (2024), 147--164.

\bibitem{MV24}
N.C. Minh and T. Vu,
\emph{A characterization of graphs whose small powers of their edge ideals have a linear free resolution}.
Combinatorica {\bf 44} (2024), 337--353.

\bibitem{Mo09}
W.F. Moore,
\emph{Cohomology over fiber products of local rings}.
J. Algebra {\bf 321} (2009), 758--773.

\bibitem{NSW17}
S. Nasseh and S. Sather-Wagstaff,
\emph{Vanishing of Ext and Tor over fiber products}.
Proc. Amer. Math. Soc. {\bf 145} (2017), no. 11, 4661--4674.

\bibitem{NgHo22}
H.D. Nguyen and Q.H. Tran,
\emph{Powers of sums and their associated primes}.
Pacific J. Math. {\bf 216} (2022), no. 1, 217--235.

\bibitem{NgT19}
H.D. Nguyen and N.V. Trung, 
\emph{Depth functions of symbolic powers of homogeneous ideals}. 
Invent. Math. {\bf 218} (2019), 779--827.


\bibitem{NgV19a}
H.D. Nguyen and T. Vu, 
\emph{Powers of sums and their homological invariants}. 
J. Pure Appl. Algebra {\bf 223} (7) (2019), 3081--3111.


\bibitem{NgV19b}
\bysame,
\emph{Homological invariants of powers of fiber products}.
Acta Math. Vietnam. {\bf 44} (2019), 617--638.

\bibitem{OR19}
J.L. O'Rourke,
\emph{Local cohomology and degree complexes of monomial ideals}.
Preprint (2019), arXiv:1910.14140.

\bibitem{Pe11} 
I. Peeva, 
{\it Graded Syzygies}, 
Algebra and Applications, Vol. {\bf 14}, Springer-Verlag London Ltd., London, 2011.

\bibitem{Se01}
L.M. \c{S}ega,
\emph{Homological properties of powers of the maximal ideal of a local ring}. 
J. Algebra {\bf 241} (2), 827--858 (2001).


\bibitem{SF18}
S.A. Seyed Fakhari,
\emph{Symbolic powers of cover ideal of very well-covered and bipartite graphs}.
Proc. Amer. Math. Soc. {\bf 146} (2018), 97--110.

\bibitem{SF19}
\bysame,
\emph{Regularity of symbolic powers of cover ideals of graphs}.
Collect. Math. {\bf 70} (2019), 187--195.

\bibitem{SF20}
\bysame, 
\emph{Regularity of symbolic powers of edge ideals of Cameron-Walker graphs}. 
Communications in Algebra {\bf 48} (2020), no. 12, 5215--5223.

\bibitem{SF24}
\bysame,
\emph{Some inequalities regarding the regularity of powers of monomial ideals}.
Arxiv preprint (2024), available at \textsf{http://arxiv.org/abs/2401.12446v1}.


\end{thebibliography}
\end{document}